\documentclass[arxiv,preprint,usenames,dvipsnames]{imsart} 
\RequirePackage[OT1]{fontenc}
\RequirePackage{amsthm,amsmath,amssymb}
\RequirePackage[numbers]{natbib}
\RequirePackage[colorlinks]{hyperref}
\usepackage{caption,subcaption} % for figures with subfigures
\usepackage{mathrsfs,enumerate,bbm,latexsym,graphicx}
\usepackage{hyperref}
\usepackage{ulem}

% settings
%\pubyear{2005}
%\volume{0}
%\issue{0}
%\firstpage{1}
%\lastpage{8}
%\arxiv{1207.1927}
\usepackage[letterpaper,margin=3cm]{geometry}

\startlocaldefs
%%% Default from ims-sample.tex
\numberwithin{equation}{section}
\theoremstyle{plain}
\newtheorem{thm}{Theorem}[section]

%%% upright greek letters
\usepackage{upgreek}

%%% tikz pictures
\usepackage{tikz}

%%% redefine Re and Im commands
\renewcommand{\Re}{\mathop{\mathrm{Re}}}

%%% Our theorem definitions

\newtheorem{corollary}[thm]{Corollary}

\newtheorem{lemma}[thm]{Lemma}

\newtheorem{proposition}[thm]{Proposition}
\newtheorem{remark}[thm]{Remark}
\newtheorem{theorem}[thm]{Theorem}

\newtheorem{question}{Question}
\newtheorem{assumption}{Assumption}

% Macro for comments
\usepackage{xcolor} % extra colors

\newcommand{\df}[1]{\textbf{#1}}

% Macro for Z, R, etc.
\def\Z{\mathbb{Z}}

\def\S{\mathcal{S}}
\def\C{\mathcal{C}}

%% Macro for probability measure
\newcommand{\prob}[1]{\mathbb{P}\left(#1\right)}
\newcommand{\probsub}[2]{\mathbb{P}_{#2}\left(#1\right)}

% Macro for absolute values, floors, etc.
\newcommand{\abs}[1]{\left|#1\right|}
\newcommand{\norm}[1]{\left\|#1\right\|}

\newcommand{\ceil}[1]{\left\lceil#1\right\rceil}

% Special notation/macros

\newcommand{\eps}{\epsilon}

\endlocaldefs

\begin{document}

\begin{frontmatter}
\title{Coarsening model on $\Z^{\lowercase{d}}$ with biased zero-energy flips and an exponential large deviation bound for ASEP}
\runtitle{Coarsening with biased flips}

\begin{aug}
\author{\fnms{Michael} \snm{Damron}},
\author{\fnms{Leonid} \snm{Petrov}} \and
\author{\fnms{David} \snm{Sivakoff}}
\end{aug}
\begin{abstract}
We study the coarsening model (zero-temperature Ising Glauber dynamics) on $\Z^d$ (for $d \geq 2$) with an asymmetric tie-breaking rule. This is a Markov process on the state space $\{-1,+1\}^{\mathbb{Z}^d}$ of ``spin configurations'' in which each vertex updates its spin to agree with a majority of its neighbors at the arrival times of a Poisson process. If a vertex has equally many $+1$ and $-1$ neighbors, then it updates its spin value to $+1$ with probability $q \in [0,1]$ and to $-1$ with probability $1-q$. 
The initial state of this Markov chain 
is distributed according to a product measure with probability $p$ for a spin to be $+1$.
In this paper, we show that for any given $p>0$, there exist $q$ close enough to 1 such that a.s. every spin has a limit of $+1$. This is of particular interest for small values of $p$, for which it is known that if $q=1/2$, a.s. all spins have a limit of $-1$. For dimension $d=2$, we also obtain near-exponential convergence rates for $q$ sufficiently large, and for general $d$, we obtain stretched exponential rates independent of $d$. Two important ingredients in our proofs are refinements of block arguments of Fontes-Schonmann-Sidoravicius and a novel exponential large deviation bound for the Asymmetric Simple Exclusion Process.
\end{abstract}

\begin{keyword}[class=AMS]
\kwd{60K35; 82C22}
\end{keyword}

\begin{keyword}
\kwd{zero temperature Ising model}
\kwd{Glauber dynamics}
\kwd{coarsening model}
\kwd{ASEP}
\end{keyword}

\end{frontmatter}

\section{Introduction}

The coarsening model on $\Z^d$ with nearest-neighbor edges is defined as follows.   Let $\S = \{-1,1\}^{\Z^d}$.  Each vertex, $x\in\Z^d$, has associated with it an independent Poisson clock of rate 1 and a spin $\sigma_x^t \in \{-1,+1\}$.  The state of the system at time $t$ is then $\sigma^t = \{\sigma_x^t\}_{x\in \Z^d} \in \S$. Define the energy at $x$ to be $e_x^t = -\sum_{y\sim x} \sigma_x^t \sigma_y^t$, where $y\sim x$ means that $y$ is a neighbor of $x$.  When the clock at $x$ rings, say for the $i^\text{th}$ time at time $t$, $x$ updates its spin as
$$
\sigma_x^t = \begin{cases}
\sigma_x^{t-} & \hbox{if $e_x^{t-} < 0$} \\
-\sigma_x^{t-} & \hbox{if $e_x^{t-} > 0$} \\
\xi^i_x & \hbox{if $e_x^{t-} = 0$},
\end{cases}
$$
where $\xi_x^i$ is an independent (of everything else) random variable with 
\[
\prob{\xi^i_x = +1} = 1-\prob{\xi^i_x=-1}=q \in [0,1].
\]
(Formally, we assign an i.i.d. sequence $(\xi_x^i)_{i\in \mathbb{N}}$ of variables to each site, and use the $i$-th variable at site $x$ at the time of the $i$-th update of $x$.) The variables $\xi^i_x$ break ties: when $e_x^{t-}=0$, there are equal numbers of $+1$ and $-1$ neighbors of $x$, so instead of assuming the majority spin, $\sigma_x^t$  assumes an independent spin $\xi_x^i$.

The initial state, $\{\sigma_x^0\}_{x\in \Z^d}$ is assumed to be drawn from the product measure with probability $p$ for $+1$.  Let $\mathbb{P}^\sigma_{q}(\cdot)$ denote the law of the process with initial configuration $\sigma^0=\sigma$, and denote by $\prob{\cdot} = \probsub{\cdot}{p,q}$ the joint law of the process and the initial state $\sigma$ drawn from the product measure.

%\note{Should we do general dimensions, $d\ge 2$?  What to do about parts of the argument copied from FSS?}

\subsection{Main result}

Observe that for any $q$, the dynamics are attractive with respect to the standard partial ordering of states, where $\sigma \leq \hat \sigma$ iff $\sigma_x \leq \hat\sigma_x$ for all $x \in \Z^d$.  This implies that for $q<\hat q$, if $\sigma \leq \hat \sigma$, then one can couple $\mathbb{P}^\sigma_{q}$ and $\mathbb{P}^{\hat\sigma}_{\hat q}$ such that $\sigma^t \leq \hat \sigma^t$ for all $t$, simply by coupling $\xi^i$ and $\hat\xi^i$ such that $\xi^i \leq \hat\xi^i$ and using the same clocks for both processes. We then define
\[
q_c = q_c(p,d) = \inf\left\{q \in [0,1] : \mathbb{P}_{p,q}\left(\lim_{t\to\infty} \sigma^t_0 = +1\right) = 1\right\},
\]
whenever the set on the right is nonempty. The main purpose of this paper is to ask the following question and to give a partial result.
\begin{question}
Is $q_c = q_c(p,d)$ strictly between $\frac12$ and $1$ for some $p\in (0,1)$ and $d\ge 2$?
\end{question}
%It follows from Morris's work (see the discussion  following Theorem 1 in~\ref{Morris}) that for every $p\in (0,1/2)$, $q_c(p,d)\to 1$ as $d\to\infty$. 
We also define
$$
p_c = p_c(q,d) = \sup\left\{p \in [0,1] : \mathbb{P}_{p,q}\left(\lim_{t\to\infty} \sigma^t_0 = -1\right) = 1\right\}.
$$
It was shown in~\cite{FSS} that $p_c(1/2,d) > 0$ for all $d\ge 2$, and in~\cite{Morris} it was shown that $p_c(1/2,d) \to 1/2$ as $d\to\infty$. Therefore, one might think that $q_c(p,d) = 1$ for some small enough $p>0$ and large enough~$d$. In other words, if $p>0$ is small and fixed, and we choose $d$ large enough so that $p_c(1/2,d)>p$, changing $q$ from $1/2$ (where the system converges to $-1$) to $q \in (1/2,1)$ might not be enough to drive the system to $+1$. Our main result shows this is false.
\begin{thm}\label{thm: main_thm}
For any fixed $p \in (0,1)$ and $d \geq 1$, one has $q_c(p,d) < 1$.
\end{thm}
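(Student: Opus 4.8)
The strategy is a renormalization / block argument à la Fontes-Schonmann-Sidoravicius, but run "in reverse": instead of showing that sparse $+1$ spins die out (which is what happens at $q=1/2$ for small $p$), I want to show that when $q$ is close to $1$, the $+1$ phase percolates and, moreover, stabilizes. The key point is that at $q=1$, every tie is broken in favor of $+1$, so the all-$+1$ configuration is absorbing and, more importantly, fairly modest local seeds of $+1$'s grow. The macros \seed, \spread, \grow in the preamble strongly suggest the intended structure: identify a local "seed" event, show seeds "spread" to fill a box, and show the filled boxes "grow" / persist forever.

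First I would reduce to a statement about a single large box: by attractiveness and the coupling described in the excerpt, it suffices to exhibit, for each fixed $p>0$, a length scale $L$ and a $q<1$ such that with probability close to $1$ a box of side $L$ (or a suitable "good" sub-box) becomes entirely $+1$ before a fixed time $T$ and stays $+1$. Then a standard comparison with oriented/site percolation at high density (using finite speed of influence — the state at the origin up to time $t$ depends only on clocks and initial spins in a bounded region, and here we can even restrict attention to a local dynamics inside a box) propagates the event to cover all of $\Z^d$: every vertex is eventually surrounded by a "stable $+1$" block, hence has limit $+1$. The main work is the single-box estimate, and it splits into two pieces. (i) \seed: with $q=1$-dynamics restricted to a box $B$, a product-$p$ initial configuration reaches all-$+1$ with positive probability $c(p,L,d)>0$ in time $\le T$; this is a finite Markov chain fact once one checks that all-$-1$ is not reachable and that all-$+1$ is the unique absorbing state of the restricted chain — or more robustly, one builds an explicit sequence of updates (using the structure of $\Z^d$: there is always a vertex on the boundary of the $-1$-region with $\ge d$ plus-neighbors when $q=1$, hence energy $\le 0$, so it can flip to $+1$) that drains the box in a bounded number of steps with clocks in $[0,T]$ and the right $\xi$'s. (ii) \grow/persist: once $B$ is all $+1$ and a thick enough surrounding shell is also all $+1$, the block is stable forever under the real ($q<1$) dynamics, because an interior $+1$ has all $+1$ neighbors (energy $<0$), never flips, so the $+1$ region can only recede from its boundary — and if neighboring blocks are also $+1$, there is no boundary. (iii) Finally I transfer from $q=1$ to $q<1$: the restricted-box event in (i) involves only finitely many $\xi$-variables and finitely many clock rings in $[0,T]$, and the probability of that exact finite pattern of tie-break outcomes is continuous in $q$; hence for $q$ close enough to $1$ the seeding probability is still $\ge c(p,L,d)/2 > 0$, which (after choosing $L$ large so that $c$ is large enough for the percolation comparison, à la FSS) suffices.

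The hard part will be getting the quantitative ordering of parameters right: I need the per-box success probability to exceed the percolation threshold of the comparison process, and that probability is a product of a $p$-dependent factor (the box must contain enough initial $+1$'s to start draining — which forces $L = L(p)$ large, in fact $L$ must grow as $p\to 0$) and a $q$-dependent factor (all the ties encountered during the draining must break $+1$ — there are $O(L^d)$ of them, so this factor is roughly $q^{O(L^d)}$, forcing $q$ extremely close to $1$, namely $1-q \ll L^{-d}$). These two requirements are compatible because we get to choose $L$ first (depending only on $p,d$) and then $q$ afterwards, so there is no circularity; but the bookkeeping — especially arguing that the draining procedure needs only $O(L^d)$ tie-breaks and only initial $+1$'s in a controlled sub-region, and that "thick shell of $+1$" blocks tile space in an oriented-percolation-friendly way — is where the care is needed. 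For the refined convergence rates mentioned in the abstract one would instead iterate the block construction across scales and invoke the novel ASEP large-deviation bound, but for Theorem 1.1 as stated the single-scale argument above suffices.
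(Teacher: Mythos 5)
Your overall architecture --- seed the $+1$ phase at $q=1$ inside large boxes, transfer to $q<1$ by continuity of a finite-box, finite-time event, then propagate by a block comparison --- matches the paper's. But two of your three steps have genuine gaps, and the second is fatal to your closing claim that a ``single-scale argument suffices'' for Theorem~\ref{thm: main_thm}.

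First, the seed step. It is not true that all-$+1$ is the unique absorbing state of the restricted chain: all-$-1$ is absorbing as well, and at $q=1$ any $+1$ spin with fewer than $d$ plus-neighbors flips to $-1$ \emph{deterministically} when its clock rings, so a generic sparse configuration cannot be drained to all $+1$ by any sequence of legal updates --- isolated and thin $+1$ structures are destroyed, not grown. Your heuristic ``there is always a vertex of the $-1$ region with $\geq d$ plus-neighbors'' is valid for a finite $-1$ island surrounded by $+1$'s (the erosion regime), not for a sea of $-1$'s containing sparse $+1$'s (the seeding regime). Your fallback, an explicit update sequence, yields a per-box success probability exponentially small in $L^d$, which cannot exceed any percolation threshold; making $L$ larger only makes it worse. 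The missing ingredient is the comparison with modified bootstrap percolation: $2^d$-blocks of initial $+1$'s are stable at $q=1$ (each vertex has exactly $d$ plus-neighbors inside the block, so ties always resolve to $+1$), they occur with density $p^{2^d}>0$, and Schonmann's internal-spanning theorem (Lemma~\ref{bootstrap}) then gives a per-box success probability at least $1-e^{-cL}$. This is Proposition~\ref{q=1 thm}, and it is what makes the renormalization work.

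Second, and more seriously, your persistence step is wrong for $q<1$. ``If neighboring blocks are also $+1$, there is no boundary'' holds only if the union of $+1$ blocks is all of $\Z^d$; a percolating cluster of $+1$ blocks at density $1-\epsilon<1$ still has an infinite boundary, every tie on that boundary resolves to $-1$ with probability $1-q>0$, and a finite (however thick) all-$+1$ region can be eaten layer by layer from its boundary. Percolation of $+1$'s at a finite time therefore does not imply fixation. One must show that the residual $-1$ islands are eroded faster than they can spread, uniformly over all scales; this is exactly the FSS multi-scale renormalization (Theorem~\ref{thm: block_FSS}), which in turn requires the quantitative erosion estimates of Assumption~\ref{assumption} / Theorem~\ref{exponential box decay}. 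These ingredients are needed even for the qualitative statement $q_c(p,d)<1$, not only for the convergence rates of Theorem~\ref{fixation times}.
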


\begin{remark}
A simple consequence of Theorem~\ref{thm: main_thm} is that the set defining $q_c$ is nonempty. In contrast, it is important to note that the analogue of Theorem~\ref{thm: main_thm} cannot hold on $k$-regular trees with even values of $k \geq 4$. (If $k$ is odd, there are no ties, and so $q$ has no effect.) Indeed, one can show that for $p$ small enough, even when $q=1$, one has $\sigma_x^t = -1$ for all $t$ with positive probability. As a result, the set defining $q_c(p)$ for such $p$ is empty. One can prove this by dominating our coarsening model by a $-1\to+1$ bootstrap percolation process with threshold $\frac{k}{2} +1$. Therefore if $p$ is smaller than the critical probability for bootstrap percolation (which is positive by \cite{BPP,CRL}), some spins in the coarsening model will stay $-1$ forever.
\end{remark}

Theorem~\ref{thm: main_thm} is a direct consequence of more precise bounds that we derive on fixation times. %Specifically, we show the following theorem.
\begin{thm}\label{fixation times}
Let $p>0$.
\begin{enumerate}
\item For $d=2$, there exists $q<1$ and $C>0$ such that
\[
\mathbb{P}_{p,q}(\sigma_0^s = -1 \text{ for some }s \geq t) \leq \exp\left(-C\frac{t}{\log^2 t}\right) \text{ for all large } t > 0.
\]
\item For $d\ge 3$, there exists $q<1$ such that for any real $\beta>\min(d-1,3)$ %\mkd{change for dimension 3, generally to $\min\{d-1,3\}$}
\[
\mathbb{P}_{p,q}(\sigma_0^s = -1 \text{ for some }s \geq t) \leq \exp\left( -t^{1/\beta} \right) \text{ for all large } t > 0.
\]
\end{enumerate}
\end{thm}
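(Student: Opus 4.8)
The plan is to bound $\mathbb{P}_{p,q}(0\notin A^t)$, where $A^s=A^s(\sigma)$ denotes the random set of sites $x$ with $\sigma^u_x=+1$ for every $u\ge s$; since $\{\sigma^s_0=-1\text{ for some }s\ge t\}$ coincides exactly with $\{0\notin A^t\}$, this suffices. I would organize the argument around three ingredients. The first is a renormalization/block step, a quantitative refinement of the constructions of~\cite{FSS} run ``in reverse'' --- growing the $+1$ phase instead of eliminating it. One fixes a finite ``seed'' configuration (concretely, a solid $+1$ square of a suitable fixed side length $\ell$, together with a not-too-atypical local environment) and shows that it occurs in every box of side $L=L(\ell)$ with probability at least $1-\delta$, where $\delta=\delta(L)\to0$; because the seed is a fixed local event of positive probability with $\asymp(L/\ell)^d$ disjoint copies to test inside a box, $\delta$ decays at least like $(1-p^{\ell^d})^{(L/\ell)^d}$, which is super-exponentially small in $L$. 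The second ingredient is a \emph{growth lemma}: started from such a seed, the set $A^s$ of permanently-$+1$ sites expands at a controlled rate, so that $A^s\supseteq[-R,R]^d$ for all $s\ge\tau(R)$, outside an event of small probability. The third is the exponential ASEP large-deviation bound quoted earlier, which powers the growth lemma in $d=2$.

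The crux is the growth lemma. For $q$ close to $1$ the biased tie-breaking rule favours the $+1$ phase: wherever the interface has a step, the $-1$ site there flips to $+1$ at rate $q$, and such flips outpace the flip-backs, which cost probability $\asymp 1-q$. The first step is to show that a large solid $+1$ square is metastable: its interior is frozen, only its corners flicker, and a coordinated dissolution would need $\asymp\ell$ nearly-simultaneous flips and so has probability $\asymp(1-q)^{c\ell}$; for $q$ close to $1$ the square therefore persists well beyond the time scales in play. The second step is to show that, fed by the $+1$ spins already present at density $p$ in the surrounding environment, such a square genuinely grows: whenever an ambient $+1$ borders one of the droplet's flat sides it nucleates a step, and steps spread along that side through a biased exclusion (corner-growth) dynamics, so that each side advances at a deterministic speed $v=v(q)>0$. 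Hence in $d=2$ the radius of the permanently-$+1$ region grows linearly in $s$, i.e.\ $\tau(R)\asymp R$, and the slow-growth event $\{A^s\not\supseteq[-R,R]^2\}$ at $s\ge\tau(R)$ forces either a stretch of the exclusion interface to run anomalously slowly --- made exponentially unlikely in $R$ by the ASEP large-deviation bound --- or the environment to be anomalously short of $+1$s along the droplet's path --- made exponentially unlikely in $R$ by a Bernoulli large-deviation estimate. Up to logarithmic losses this yields $\mathbb{P}(A^s\not\supseteq[-R,R]^2)\le\exp(-cR/\log^2 R)$.

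For $d\ge3$ the ASEP/corner-growth input is intrinsically two-dimensional and is no longer available in this form; instead I would dominate the growth from below by a coarser monotone mechanism --- building the $d$-dimensional droplet recursively by growing lower-dimensional droplets on its faces, down to the two-dimensional case --- whose radius reaches $R$ only after time $\tau(R)\asymp R^{\beta_0}$ with $\beta_0=\min(d-1,3)$, the cap at $3$ coming from an argument that ceases to degrade once the effective sub-problem is three-dimensional; a Bernoulli/geometric large-deviation estimate then gives $\mathbb{P}(A^s\not\supseteq[-R,R]^d)\le\exp(-cR)$ for $s\ge\tau(R)$. The pieces are assembled in both cases by the same union bound. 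Let $R_t$ be the radius a seed's droplet typically covers by time $t$ --- of order $t$ when $d=2$ and of order $t^{1/\beta_0}$ when $d\ge3$, with a small multiplicative constant so that coverage by time $t$ is comfortably typical. The probability that no box of side $L$ inside $[-R_t,R_t]^d$ contains a seed is at most $\delta^{c(R_t/L)^d}$, super-exponentially small in $R_t$ and hence negligible; otherwise, for any $x\in[-R_t,R_t]^d$ carrying a seed, $0\notin A^t$ forces that seed's droplet to have failed to cover $x+[-R_t,R_t]^d$ by time $t$ (coverage would give $0\in A^t$ since $|x|\le R_t$), an event of probability at most $\exp(-cR_t/\log^2 R_t)$ ($d=2$) or $\exp(-cR_t)$ ($d\ge3$) by the growth lemma, uniformly in $x$ (the conditioning from planting a seed only helps, by attractiveness). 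Summing over the $O(R_t^d)$ choices of $x$ leaves $\exp(-ct/\log^2 t)$ for $d=2$ and $\exp(-t^{1/\beta})$ for any $\beta>\beta_0$ when $d\ge3$, which together with the negligible ``no-seed'' term bounds $\mathbb{P}_{p,q}(0\notin A^t)$ and proves the theorem.

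The main obstacle is the growth lemma, and the reason for the logarithmic losses in $d=2$ and the weaker exponents in higher dimension is the same: the $+1$ region is \emph{not} monotone in time, since interface and corner spins flip back to $-1$ with probability $1-q>0$, so one cannot simply dominate the coarsening dynamics from below by a monotone growth or bootstrap model. The way around this is to work with the monotone ``record'' set $A^s$ of permanently-$+1$ sites and prove that it still advances at positive speed; this is exactly where the new ASEP large-deviation bound does the essential work --- by controlling the lower tail of the exclusion current it certifies that the growing interface almost never stalls --- and where the interplay between the deterministic motion of the interface and the random $+1$-environment has to be handled with care. Establishing that ASEP estimate, with the right exponential rate and for the flat/step initial data relevant here, is the main technical input; the precise power of the logarithm in $d=2$ then reflects how the renormalization scales interact with that exponent.
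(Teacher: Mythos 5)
Your overall architecture (seed $+$ growth lemma $+$ union bound over seed locations) is genuinely different from the paper's, and its central ingredient --- the growth lemma --- has a gap that I do not believe can be repaired in the form you state it. You claim that a solid $+1$ square immersed in a density-$p$ sea of $+1$'s grows at positive speed because ``whenever an ambient $+1$ borders one of the droplet's flat sides it nucleates a step, and steps spread along that side through a biased exclusion (corner-growth) dynamics.'' But a $+1$ site protruding from a flat side of a convex droplet has one $+1$ neighbor (in the droplet) and three $-1$ neighbors: it is in \emph{strict} minority, flips back to $-1$ at rate $1$, and the tie-breaking parameter $q$ is irrelevant to it. The ASEP/corner-growth coupling (Corollary~\ref{cor:sigma_ASEP_bound}) applies to the \emph{erosion of a $-1$ quadrant surrounded by $+1$'s} --- a concave interface, where every interface site is at a tie and the bias drives the current --- not to convex outward growth into a $-1$ majority. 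Convex $+1$ droplets in a sparse environment do not advance at positive speed under these dynamics for any $q<1$; this is precisely why the paper never proves a growth lemma for $q<1$.

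The paper's route circumvents this in two steps you are missing. First (Proposition~\ref{q=1 thm}), at $q=1$ the dynamics dominates a modified bootstrap percolation, so a large box $\Lambda_{L_0}$ becomes all $+1$ by a \emph{fixed finite} time $t_0$ with probability $\ge 1-\eps/2$ even conditioned on all $-1$'s outside; since this is a finite-space, finite-time event, its probability is continuous in $q$, which yields a $q^*<1$ with the same conclusion (display \eqref{box q*}). This produces, at time $t_0$, an i.i.d.\ block configuration in which each $L_0$-block is all $+1$ with probability $1-\eps_0$ close to $1$. Second, from that point on the problem is the \emph{erosion} of the sparse $-1$ blocks surrounded by a $+1$ majority, handled by the multiscale FSS renormalization (Theorem~\ref{thm: block_FSS}) together with the erosion-time input of Theorem~\ref{exponential box decay}: the ASEP bound gives linear erosion time in $d=2$, and slicing/restarting arguments give $L^2$ and $L^3(\log L)^C$ for $d=3$ and $d\ge4$, whence the exponent $\min(d-1,3)$. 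Your proposal inverts the roles of the two phases --- it asks a single seed to conquer a hostile environment rather than asking an overwhelmingly $+1$ environment to erase its defects --- and the step where the ASEP estimate is invoked is exactly the step where the geometry (convex growth versus concave erosion) does not support it.
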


Theorem~\ref{fixation times} establishes a near-exponential fixation time in dimension 2 due to an ASEP (Asymmetric Simple Exclusion Process) large deviation estimate, which may be of independent interest, and which we describe below. For dimension $d\ge 3$, we establish stretched-exponential bounds with dimension-independent exponents by applying erosion time estimates of~\cite{CMST,Lac}.

To formulate the ASEP large deviation bound, let us first recall the definition of the process. 
The ASEP is a continuous time Markov chain on particle configurations 
$\mathsf{x}=(\mathsf{x}_1>\mathsf{x}_2>\ldots )$ in $\mathbb{Z}$
(each location can be occupied by at most one particle).
For our purposes, it suffices to consider configurations which have a rightmost particle.
Each particle has an independent clock with exponential waiting time of mean $1$.
When the clock rings, the particle jumps to the right with probability $q$ 
or to the left with probability $1-q$, provided that the destination is unoccupied
(otherwise the jump is forbidden).
Let us denote $\upgamma:=2q-1$.
We consider the ASEP started from the \textit{step} initial configuration
$\mathsf{x}_j(0)=-j$, $j=1,2,\ldots $,
and will denote the corresponding probability measure by $\mathbb{P}_{\mathrm{step},q}$.

\begin{thm}\label{thm:ASEP_theorem_intro}
	Let $q>\frac{1}{2}$, i.e., the ASEP has drift to the right. 
	Fix any
	$\varepsilon \in (0,1)$
	and set $m=\lfloor \frac{t}{4}(1-\varepsilon) \rfloor $.
	There exists $\mathsf{C}>0$ such that for all $t$ large enough we have
	\begin{equation}\label{ASEP_large_dev_bound_intro}
		\mathbb{P}_{\mathrm{step},q}\bigl(\mathsf{x}_m(t/\upgamma)<0\bigr)\le \mathsf{C} e^{-t \Phi_+(\varepsilon)},
	\end{equation}
	where $\Phi_+(\varepsilon)$ is an explicit function given in \eqref{I_epsilon} below.
	It is positive and increasing for $\varepsilon>0$ 
	and behaves as $\Phi_+(\varepsilon)\sim\frac{2}{3}\varepsilon^{\frac{3}{2}}$ as $\varepsilon\to0+$.
\end{thm}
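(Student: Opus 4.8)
The plan is to convert the statement into a lower-tail large-deviation estimate for the particle current of step ASEP and then to extract the exponential rate from an exact one-point formula of Tracy--Widom type, carrying out a (deliberately lossy) steepest-descent analysis since only an upper bound is needed. The first step is purely combinatorial: because particle positions are integers and $\mathsf{x}_1>\mathsf{x}_2>\cdots$, one has the identity of events $\{\mathsf{x}_m(t/\upgamma)<0\}=\{N_{\ge0}(t/\upgamma)\le m-1\}$, where $N_{\ge0}(T)$ denotes the number of particles occupying sites $\ge0$ at time $T$ (equivalently, the integrated current across the bond $(-1,0)$). Since the hydrodynamic profile of step ASEP is the rarefaction fan with local density $(1-u)/2$ at macroscopic position $u\upgamma T$, one gets $N_{\ge0}(t/\upgamma)\approx t/4$; with $m=\lfloor\frac{t}{4}(1-\varepsilon)\rfloor<t/4$ this means $\{\mathsf{x}_m(t/\upgamma)<0\}$ is a genuine \emph{lower}-tail deviation of the current by the multiplicative factor $1-\varepsilon$, which is what forces the rate to be linear in $t$.

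Next I would invoke the exact Fredholm-determinant / contour-integral formula of Tracy and Widom for the distribution function of $\mathsf{x}_m(T)$ under the step initial condition, specialized to $T=t/\upgamma$, $m=\lfloor\frac{t}{4}(1-\varepsilon)\rfloor$, and the point $-1$. Schematically this has the form $\oint \frac{d\mu}{\mu}\,(\text{$\tau$-Pochhammer prefactor})\cdot\det(I+\mu J_{\mu,t})$ with $\tau=(1-q)/q$, and the kernel $J_{\mu,t}$ carries a scalar exponential ``action'' $e^{t(\cdots)}$. One deforms the $\mu$-contour together with the contours defining $J_{\mu,t}$ so that this action is maximized at a single saddle $\mu_\star=\mu_\star(\varepsilon)$, and one \emph{defines} $\Phi_+(\varepsilon)$ as (minus) the value of the action there, which is the content of \eqref{I_epsilon}; after the time change $t/\upgamma$ this rate is $q$-independent, and in any case for a one-sided bound it would be enough to replace it by a smaller explicit quantity. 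Because only an upper bound is required the estimates can be crude: bound the $\tau$-Pochhammer prefactor by a constant, bound $\det(I+\mu J_{\mu,t})$ by a Hadamard-type inequality (which is $e^{o(t)}$ once the kernel has trace norm bounded uniformly in $t$ on the deformed contours), and bound the remaining integral by (contour length)$\times$(maximum of $|\text{integrand}|$)$=e^{-t\Phi_+(\varepsilon)+o(t)}$, with the constant $\mathsf C$ absorbing the $e^{o(t)}$.

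The stated behavior $\Phi_+(\varepsilon)\sim\tfrac23\varepsilon^{3/2}$ as $\varepsilon\to0+$ should then come from a Taylor expansion of the saddle-point equations: at $\varepsilon=0$ the saddle $\mu_\star(\varepsilon)$ degenerates, merging with a second critical point of the action, and this degeneracy is exactly the KPZ edge at which $N_{\ge0}(t/\upgamma)$ has Tracy--Widom GUE fluctuations on scale $t^{1/3}$. Near the double point the action vanishes cubically in $(\mu-\mu_\star)$, and the standard cubic scaling converts $\Phi_+(\varepsilon)$ into $\tfrac23\varepsilon^{3/2}(1+o(1))$. As a consistency check this matches the right-tail asymptotics $1-F_{\mathrm{GUE}}(s)\sim\exp(-\tfrac43 s^{3/2})$ of the Tracy--Widom distribution under the identification $t/4-m=\tfrac{\varepsilon t}{4}=2^{-4/3}t^{1/3}s$, i.e.\ $s=2^{-2/3}\varepsilon t^{2/3}$, for which $\exp(-\tfrac43 s^{3/2})=\exp(-\tfrac23\varepsilon^{3/2}t)$. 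Positivity and monotonicity of $\Phi_+$ for $\varepsilon>0$ follow from monotonicity of the critical value of the action as a function of $\varepsilon$.

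The main obstacle I anticipate is the rigorous, uniform-in-$t$ control of the Fredholm determinant along the deformed contours: one must route the contours (especially near the pole at the integration variable $\mu$, near infinity, and across the branch structure of the action) so that the kernel's trace norm stays bounded independently of $t$, and verify that the non-steepest portions of the contours contribute at most $e^{-t\Phi_+(\varepsilon)+o(t)}$. A secondary, purely computational, difficulty is solving the saddle equations to produce the explicit $\Phi_+(\varepsilon)$ of \eqref{I_epsilon} and its small-$\varepsilon$ expansion. A shortcut worth attempting first, which would avoid the determinant altogether, is an exponential Markov bound: if the $\tau$-Laplace transform $\mathbb{E}_{\mathrm{step},q}\!\left[\tau^{\theta N_{\ge0}(t/\upgamma)}\right]$ has a tractable formula (for instance via $q$-moments), then optimizing over $\theta>0$ in $\mathbb{P}_{\mathrm{step},q}\!\left(N_{\ge0}(t/\upgamma)\le m\right)\le\tau^{-\theta m}\,\mathbb{E}_{\mathrm{step},q}\!\left[\tau^{\theta N_{\ge0}(t/\upgamma)}\right]$ should already give an exponential rate with the correct $\varepsilon^{3/2}$ behavior and considerably less work.
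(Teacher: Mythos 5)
Your starting point (Tracy--Widom's exact contour-integral/Fredholm-determinant formula followed by steepest descent) is the same as the paper's, but the core of your argument has a gap that would make it fail as written. The formula available here, \eqref{ASEP_TW_formula}, expresses the \emph{complementary} probability $\mathbb{P}_{\mathrm{step},q}(\mathsf{x}_m(t/\upgamma)>0)=\frac{1}{2\pi\mathbf{i}}\int_{|\upmu|=\mathsf{R}}(\upmu;\uptau)_{\infty}\det\bigl(\mathbf{1}+\upmu\,\mathsf{J}^{(\upmu)}_{m,t}\bigr)\frac{d\upmu}{\upmu}$, which tends to $1$; what must be controlled is the exponentially small \emph{defect} $1-\mathbb{P}(\mathsf{x}_m>0)$. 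Your plan --- bound the Pochhammer prefactor by a constant, bound the determinant by Hadamard, and bound the $\upmu$-integral by length times maximum of the integrand --- can only ever yield an upper bound of order $e^{o(t)}$ on a quantity that is close to $1$; absolute-value estimates are structurally incapable of producing the statement ``the integral equals $1-O(e^{-t\Phi_+})$,'' which is a cancellation. The paper instead (i) shows by steepest descent that the kernel is asymptotically rank one with scalar factor $e^{tS(\upzeta^{(2)})-tS(\upeta')}$, so that via a rank-one determinant lemma the Fredholm series is $1+\frac{\upmu}{2\pi\mathbf{i}}\int_{|\upeta|=\mathsf{r}}\mathsf{J}^{(\upmu)}_{m,t}(\upeta,\upeta)\,d\upeta$ plus a remainder of order $t^{-17/8}e^{-2t\hat\Phi_+}$, uniformly in $\upmu$; (ii) evaluates the $n=1$ term by a second saddle point in $\upeta$, producing the explicit factor $e^{-t\hat\Phi_+(\varepsilon)}$; and (iii) integrates out $\upmu$ exactly using $\frac{1}{2\pi\mathbf{i}}\int_{|\upmu|=\mathsf{R}}(\upmu;\uptau)_{\infty}\frac{d\upmu}{\upmu}=1$.

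Relatedly, you locate the rate function at a saddle $\upmu_\star$ of the $\upmu$-integral. In fact $\upmu$ carries no exponential content (it is integrated out by residues at the end); the rate is $\hat\Phi_+(\varepsilon)=S(\upzeta^{(1)})-S(\upzeta^{(2)})$, the difference of the action \eqref{action_S} at the two real critical points through which the $\upeta$- and $\upzeta$-contours of the \emph{kernel} are respectively routed (these are the points that merge at $\varepsilon=0$, which is the correct source of your $\tfrac{2}{3}\varepsilon^{3/2}$ heuristic). You also do not address contour admissibility: the radii $|\upzeta^{(1)}|,|\upzeta^{(2)}|$ are only compatible with the constraints $\mathsf{r}\in(\uptau,1)$, $\mathsf{r}'\in(1,\mathsf{r}/\uptau)$ when $\varepsilon<\varepsilon^\circ$ as in \eqref{bound_on_epsilon}, which is precisely why $\Phi_+$ in \eqref{I_epsilon} is truncated and larger $\varepsilon$ is handled by monotonicity of the ordered particles. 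Finally, your proposed Chebyshev shortcut is appealing in spirit, but the explicitly known generating functions for step ASEP are of $e_q$-Laplace type rather than $\mathbb{E}[\uptau^{\theta N}]$ for arbitrary real $\theta$, so it is not an off-the-shelf substitute and is not carried out here.
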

Theorem~\ref{thm:ASEP_theorem_intro} is a one-sided large deviation bound 
for the integrated ASEP current through zero 
$\mathfrak{h}_0(t):=\#\{\textnormal{particles to the right of zero at time $t$}\}$.
Indeed, it is known \cite[Theorem 5.12]{Liggett_IPS} 
that the current satisfies
the following strong law of large numbers:
\begin{equation*}
	t^{-1}\mathfrak{h}_0(t/\upgamma)
	\to
	1/4\quad\textnormal{almost surely, as }
	 t\to\infty.
\end{equation*}
(%
	Moreover,
	the fluctuations of $\mathfrak{h}_0$ around $t/4$ have order $t^{1/3}$
	and are governed by the GUE Tracy--Widom distribution
	\cite{TW_asymptotics}.%
)
The probability in the left-hand side of \eqref{ASEP_large_dev_bound_intro} 
is essentially the same as
\begin{equation*}
	\mathbb{P}_{\mathrm{step},q}
	\left(
		\frac{\mathfrak{h}_0(t/\upgamma)}{t}
		<
		\frac{1-\varepsilon}{4}
	\right),
\end{equation*}
and we obtain a one-sided exponential bound for it 
(see also Remark~\ref{rmk:ASEP_more_discussion}
below for further background on this bound).
The proof of Theorem \ref{thm:ASEP_theorem_intro} is based on asymptotic
analysis of the Fredholm determinantal representation for probability
distributions in ASEP with the step initial condition and is given in Section~\ref{section3}.
Similar analysis was employed in \cite{TW_asymptotics} to obtain GUE
Tracy--Widom fluctuation behavior for the ASEP, putting this process
into the so-called Kardar--Parisi--Zhang universality class.

\subsection{Background}
We now discuss past results as they relate to question 1. For $d=1$, when $p \in (0,1)$ and $q = 1/2$, almost surely, $\sigma_0^t$ does not have a limit \cite{Arr}. Therefore $q_c(p,1) \geq 1/2$ for all $p \in (0,1)$. However it is not difficult to show that for $d=1$ and $p \in (0,1)$, whenever $q > 1/2$, one has $\mathbb{P}_{p,q}(\lim_t \sigma_0^t=+1)=1$. Using symmetry, we conclude that $q_c(p,1) = 1/2$ for all $p \in (0,1)$.

For $d=2$, it is known \cite[Theorem~2]{NNS}  that when $p=q=1/2$, one has
\[
\mathbb{P}_{1/2,1/2}\left(\lim_{t\to\infty} \sigma_0^t \text{ does not exist}\right)=1.
\]
(This is also believed to hold for sufficiently low dimensions \cite{OKR}, while it is thought that fixation may occur when $p=q=1/2$ and $d$ is sufficiently large~\cite{SKR}.) Therefore $q_c(1/2,2) \geq 1/2$. By monotonicity of the dynamics in $p$, one furthermore has 
\[
q_c(p,2) \geq 1/2 \text{ for all }p \leq 1/2. 
\]

In general dimensions $d \geq 2$, \cite{FSS} showed that if $p$ is close to 1 and $q=1/2$, then $\sigma_0^t$ converges to $+1$ almost surely. By symmetry between $+ 1$ and $- 1$, then, if $p$ is close to 0 and $q=1/2$, then $\sigma_0^t$ converges to $-1$ almost surely. Thus we deduce that
\[
\text{for }d \geq 2,~ q_c(p,d) \begin{cases}
\leq 1/2 & \text{ for }p \text{ close to }1 \\
\geq 1/2 & \text{ for }p \text{ close to }0
\end{cases}.
\]
As a consequence of this and symmetry, one strategy to prove $q_c(1/2,d)=1/2$ for some $d \geq 2$ would be to show continuity of $q_c(p,d)$ in $p$.

{In dimension $d=2$, \cite{Lac2} considered the same dynamics as we do here, and studied the asymptotic shape of a large region of $-1$'s surrounded entirely by $+1$'s. In~\cite{Lac2}, $h>0$ represents an external magnetic field, and $q$ and $h$ are related by $q = e^{h}/(2\cosh(h))$. In the case $q=1$ ($h=\infty$), \cite{Lac2} showed that the asymptotic shape of an (initial) $L$ by $L$ square of $-1$'s satisfies a Law of Large Numbers, in the sense that the $-1$ region, when rescaled by $L$ with time sped up by $L$, follows a deterministic evolution that shrinks to a point in finite time. Moreover, \cite{Lac2} remarks that a similar result holds for all $q>1/2$ ($h>0$) and all suitable regions of $-1$ spins. The case $q=1$, when started from $-1$ in the first quadrant and $+1$ elsewhere, corresponds to the TASEP (Totally Asymmetric Simple Exclusion Process) started from the step initial condition, for which~\cite{Rost} computed the almost-sure limiting particle density (shape) when space and time are scaled linearly.
These results hint at the near-exponential fixation time in part 1 of Theorem~\ref{fixation times}, but are insufficient to derive it because we require an exponential probability bound on the (linear) speed at which $-1$ regions shrink.}

To give a positive answer to question 1, it would suffice to show that there are some (probably small) values of $p$ such that if $q > 1/2$ is arbitrarily close to $1/2$, then the system will fixate to $-1$. The difficulty is that if $p$ is small enough, then for long periods of time (depending on $q$), the system will behave as if $q=1/2$, and thus will want to fixate to $-1$ (due to \cite{FSS}). Showing that the system will not then ``change directions'' at a later time and fixate to $+1$ involves analyzing the configuration $(\sigma_x^t : x \in \mathbb{Z}^d)$ at a large $t$, when the variables are highly correlated. Unfortunately, there are few tools available for such analysis.

It is worth noting that other tie-breaking rules have been used in the literature. One option is to set $\sigma_x^t = \sigma_x^{t-}$ when $e_x^{t-}=0$. This rule is considered, for example, in \cite{BCOTT} (for the discrete-time analogue of the coarsening model, usually called the majority vote model) and the process has the same behavior as ours on $\mathbb{Z}^d$ when an additional edge is placed between each vertex and itself. On this new graph, a vertex has $2d+1$ neighbors, so there are no ties, and the additional edge keeps $\sigma_x$ from flipping when $x$'s original $2d$ neighbors have an equal number of $+1$ and $-1$ spins. Here, one can apply a result of \cite{NNS}, which applies to certain odd-degree graphs to deduce that for each $x$, $\lim_{t\to\infty} \sigma_x^t$ exists almost surely. However for any $p>0$, there exist vertices that fixate to $-1$, since any side-length two cube of initially $-1$ spins is stable for all time.

Last, we mention that there exist graphs like finite width slabs (graphs of the form $\mathbb{Z}^d \times \{0, \ldots, k\}$ for any $k \geq 3$) which have vertices of even degree but for which $q_c(p)$ cannot be strictly less than $1$ for $p< 1$. In these graphs, one can construct finite sets of initially $-1$ spins that are stable for all time.

%\section{Fixation for $q=1,p=\frac12$}
%Let $\Lambda_n = \{0, \ldots, n-1\}^2$ be an $n \times n$ with lower-left corner at the origin.
%We borrow the next definition from~\cite{CDN:2002}.
%\begin{definition}
%Let $Z_t$ be a continuous-time Markov process with state space $\mathcal{Z}$.  We say that $A\subset \mathcal{Z}$ {\bf recurs} if $\{t : Z_t \in A\}$ is unbounded.
%\end{definition}
%Let $\C_+(n) = \{\sigma\in\S\, :\, \sigma|_{\Lambda_n} \equiv +1\}$ be the set of states such that the box $\Lambda_n$ contains only $+1$ spins.  Our first theorem states that every spin fixates in the $+1$ state with probability 1 when $q=1$ and $p=\frac{1}{2}$.
%\begin{theorem}
%Suppose $q=1, p=\frac12$, then $\prob{\cap_n\{\sigma^t\in \C_+(n) \text{ eventually}\}} = 1$.  Indeed, this implies $\prob{\lim_{t\to\infty}\sigma_x^t =+1 \text{ for all } x\in\Z^2} = 1$.
%\end{theorem}
%\begin{proof}
%Theorem 4 of~\cite{CDN:2002} implies that for any $n$, $\C_+(n)$ recurs with probability 1 when $p=q=\frac12$.  By monotonicity in $q$, and since $\{\C_+(n) \text{ recurs}\}$ is an increasing event, we have $\probsub{\C_+(n) \text{ recurs}}{\frac12,1}=1$.  When $q=1$, $\C_+(n)$ is an absorbing set of states, since an energy-neutral flip is required to turn a $+1$ spin to a $-1$ spin inside $\Lambda_n$ if all spins in $\Lambda_n$ are $+1$, but no energy-neutral flips can go from $+1$ to $-1$.  This implies $\probsub{\sigma^t \in \C_+(n) \text{ eventually}}{\frac12,1}=1$, and the theorem.
%\end{proof}

\subsection{Sketch of proof}\label{sec: sketch}

Due to the above discussion, our main question has to do with the balance between low values of $p$ driving the system toward $-1$ and a bias $q>1/2$ driving the system toward $+1$. Based on the results of \cite{FSS}, the first effect occurs on timescales that are stretched exponential: for small $p$, one has
\[
\mathbb{P}_{p,1/2} (\sigma_0^t = +1) \leq C_1e^{-C_2 t^\alpha},
\]
where $\alpha < 1$ is a function of $d$. On the other hand, it is reasonable to believe that the second effect, due to $q > 1/2$, takes places on exponential time-scales. (At least in $d=2$ case this follows from a comparison to ASEP.) 

For large $q$, however, the bias has a strong effect, even when $p$ is small, and allows us to prove Theorem~\ref{thm: main_thm}. We now sketch the argument. First consider $q=1$. In this case, whenever a vertex has $d$ or more neighbors with $+1$ spin, it flips to $+1$. We can therefore compare to a Modified Bootstrap Percolation (MBP) process which is defined as follows. Each site begins with a $\pm 1$ spin, and the distribution of these spins is i.i.d. with probability $p>0$ to be $+1$. At each time $t=1, 2, \ldots$, each vertex with at least $d$ neighbors with spin $+1$, all in distinct directions ($\pm e_i$ for $i=1, \ldots, d$) flips to $+1$. All other spins remain the same --- see the definition in Section~\ref{Fixation for q=1}. It is known \cite{schonmann} that for this process, almost surely, each spin eventually fixates to $+1$.  (It is not sufficient to consider the standard Bootstrap Percolation process, which requires only $d$ neighbors with spin $+1$ to flip to $+1$, with no restriction on the directions being distinct. This is due to our identifying $2^d$-sized blocks with sites in the bootstrap percolation process in the proof of Proposition~\ref{q=1 thm}, so blocks on opposite sides of a given block have no neighboring vertices in common, and therefore do not aid in the growth of $+1$'s.) 

In the original coarsening process with $q=1$, a $+1$ spin can flip to $-1$ if it has at least $d+1$ neighbors with $-1$ spins, so the coarsening dynamics are not exactly the same as those of the MBP process. However, a comparison with MBP shows that sufficiently large squares $\Lambda$ have the following property. With high probability (in the size of the square), there is a $t$ such that all vertices in $\Lambda$ have spin $+1$ at time $t$. Once all the spins in a square become $+1$, they will remain $+1$ forever. We then conclude that for $q=1$, the coarsening model fixates to $+1$. This is stated in Proposition~\ref{q=1 thm}. 
%\mkd{possibly come back and change based on what we proved. if ASEP allows us not to need a precise relation between $q$ and box size.}

To allow $q<1$, we choose a large square $\Lambda$ and pick $q<1$ so that with probability close to 1, at a fixed large time $t$, all spins in $\Lambda$ are $+1$. This places us in a variant of the setting of the FSS argument \cite{FSS}, which is designed to show that the coarsening model will fixate to $+1$ if the initial condition is sufficiently biased to $+1$. 
%Our modification allows us to derive a faster fixation rate with more general initial conditions (see Remark~\ref{initial cond rmk}) \mkd{what could this refer to?}. 
%A related application of the FSS argument was used by Morris \cite{??} to obtain an analogue of the fixation rate
%\[
%\mathbb{P}_{p,q}(\sigma_0^t = -1 \text{ for some }t \geq T) \leq \exp\left( - C T^{1/d}\right).
%\]
%In Section~\ref{sec: almost_finish}, we show how this estimate follows from Morris's modification of \cite{FSS}.
In Theorem~\ref{thm: block_FSS}, we present a version of the FSS argument in which the initial condition is constant on blocks, and which (in the case $d=2$) compares erosion of blocks of $-1$ spins to the behavior of the ASEP particle system rather than the SEP, as was done in \cite{FSS}. This comparison allows for a faster fixation rate ($\exp(-Ct/\log^2 t)$) for $d=2$ than was given in \cite{FSS} ($\exp(-Ct^{1/2 - \epsilon})$). To do this, we give a large deviation bound for ASEP (Theorem~\ref{thm:ASEP_theorem_intro}), and this may be of independent interest. In higher dimensions, we apply the results of \cite{CMST,Lac} for the speed of unbiased corner growth to obtain a fixation rate of $\exp(-t^{1/\beta})$ for any $\beta>\min\{3,d-1\}$, and this rate also improves on that given in \cite{FSS}. In Section~\ref{sec: finish}, we combine Theorem~\ref{thm: block_FSS} with Proposition~\ref{q=1 thm} to derive Theorem~\ref{thm: main_thm}.

\section{Fixation for $q=1$ and $p>0$} \label{Fixation for q=1}
%\mkd{from here onward, pretend whole paper is on two dimensions} 
We intend to show that fixation occurs for any initial density $p>0$ when $q=1$.  The argument relies on a result for the Modified Bootstrap Percolation process. \df{Modified Bootstrap Percolation (MBP)} is a discrete-time, monotone growth process whose state (or configuration) at step $n$ is $\zeta_n\in \{0,1\}^{\Z^d}$.  For each $v\in \Z^d$, we say $v$ is \df{occupied} at step $n$ if $\zeta_n(v) = 1$, and is \df{vacant} otherwise.  Given an initial configuration $\zeta_0$, the deterministic dynamics proceed as follows. A vacant site $v$ at step $n$ becomes occupied at step $n+1$ if and only if 
\begin{equation}\label{mbp-rule}
\#\{i\in\{1,\ldots,d\}\,:\, \text{at least one of $v\pm e_i$ is occupied at step }n\}= d,
\end{equation}
%$$
%\zeta_{n+1}(v) = \begin{cases}
%1 & \hbox{if $\displaystyle \sum_{k=-1}^1 \zeta_n(v+ke_1)\sum_{\ell=-1}^1\zeta_n(v+\ell e_2)>0$}\\
%0 & \hbox{otherwise,}
%\end{cases}
%$$
where $e_i$ denotes the $i^\text{th}$ standard basis vector, and we let $\zeta_\infty$ denote the pointwise limit of $\zeta_n$. In words, if $v$ is occupied, it remains occupied forever, and if $v$ is vacant, then it becomes occupied if it sees occupied neighbors in all $d$ distinct basis directions.  
%However, a vacant site $v$ does not become occupied if only its North and South neighbors are occupied.  
The initial configuration is drawn from a product measure $P_\theta$ on $\{0,1\}^{\mathbb{Z}^d}$ with probability $\theta$ for $1$. For a set $A\subseteq \Z^d$, we say that the initial configuration $\zeta_0$ \df{spans} $A$ if every vertex in $A$ eventually becomes occupied, so $\zeta_\infty(v) = 1$ for every $v\in A$.  Define the configuration $\zeta_0^A$ as
$$\zeta_0^A(v) = \begin{cases} \zeta_0(v) & v\in A \\ 0 & v\in A^c. \end{cases}$$
We say that $\zeta_0$ \df{internally spans} $A$ if $\zeta_0^A$ spans $A$.

Let $\Lambda_n = [0,n-1]^d$.  We will make use of the following bound on the probability that MBP internally spans the box $\Lambda_n$, which is Proposition 3.2 in~\cite{schonmann}.

\begin{lemma}
\label{bootstrap}
Fix $\theta>0$ and $d\ge 2$.  Then there exists a constant $c>0$ (depending on $\theta$ and $d$) such that $$P_\theta(\zeta_0 \text{ internally spans } \Lambda_n) \geq 1 - e^{- c n}.$$
\end{lemma}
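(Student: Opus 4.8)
The plan is to reduce this probabilistic statement to a deterministic combinatorial criterion for internal spanning and then estimate the probability by a union bound. Call a set $\ell\subseteq\Lambda_n$ an \emph{axis-parallel line} if $\ell=\{v\in\Lambda_n : v_j=c_j \text{ for all } j\neq i\}$ for some $i\in\{1,\dots,d\}$ and some $(c_j)_{j\neq i}$, and write $\{\zeta_0=1\}$ for the initially occupied set. The criterion I would establish is:
\[
\zeta_0 \text{ internally spans } \Lambda_n \iff \text{every axis-parallel line of } \Lambda_n \text{ meets } \{\zeta_0=1\}.
\]
For $d=2$ this says exactly that every row and every column of $\Lambda_n$ carries at least one occupied vertex.

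The forward (necessity) direction is immediate: if some axis-parallel line $\ell$ in direction $e_i$ is entirely vacant in $\zeta_0^{\Lambda_n}$, then it stays vacant forever, because a vacant vertex $v\in\ell$ can only be made occupied once it has an occupied neighbor in direction $e_i$, and both candidates $v\pm e_i$ again lie on $\ell$; an induction over the (discrete) update steps shows no vertex of $\ell$ is ever occupied, so $\Lambda_n$ is not spanned. Granting the criterion, the lemma follows from a union bound: there are at most $d\,n^{d-1}$ axis-parallel lines in $\Lambda_n$, and each is entirely vacant under $P_\theta$ with probability $(1-\theta)^n$, so
\[
P_\theta\bigl(\zeta_0 \text{ does not internally span }\Lambda_n\bigr)\le d\,n^{d-1}(1-\theta)^n .
\]
Any $c$ with $0<c<\log\frac{1}{1-\theta}$ makes the right-hand side $\le e^{-cn}$ for all large $n$, and shrinking $c$ absorbs the finitely many remaining small $n$ (each of which has $P_\theta(\zeta_0 \text{ internally spans }\Lambda_n)\geq\theta^{n^d}>0$). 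This step is exactly where the exponential, rather than merely polynomial, rate in $n$ comes from.

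The content is the reverse (sufficiency) direction, equivalently the structural statement: if $C\subsetneq\Lambda_n$ is stable under the MBP update rule (sites outside $\Lambda_n$ counted as vacant), then $\Lambda_n\setminus C$ contains a full axis-parallel line. Applied to $C$ equal to the limiting configuration of MBP run from $\zeta_0^{\Lambda_n}$, this produces a vacant line disjoint from $\{\zeta_0=1\}$, contradicting the hypothesis and forcing $C=\Lambda_n$. I would prove the structural statement by induction on $d$ ($d=1$ being trivial, since one-dimensional MBP spreads occupation from any occupied site), using a dimensional reduction special to MBP: once a solid box $B=\prod_i[a_i,b_i]$ is fully occupied, its adjacent face $F=\{b_1+1\}\times\prod_{i\ge 2}[a_i,b_i]$ subsequently fills according to $(d-1)$-dimensional MBP with seed $\{\zeta_0=1\}\cap F$, because each $v\in F$ already has the occupied neighbor $v-e_1\in B$ in direction $e_1$ and hence only needs occupied neighbors in directions $e_2,\dots,e_d$, all lying in the hyperplane of $F$; and the line-covering hypothesis restricts on $F$ to precisely the $(d-1)$-dimensional one. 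For $d=2$ one can argue more directly, showing that the complement of a stable set is a union of full rows and full columns.

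The step I expect to be the main obstacle is the bookkeeping in this induction. The natural move is to peel off a face, writing $\Lambda_n=B\cup F$ with $B=[0,n-2]\times[0,n-1]^{d-1}$ and $F=\{n-1\}\times[0,n-1]^{d-1}$; then $F$ inherits the full line-covering hypothesis (so it fills once $B$ does, by the $(d-1)$-dimensional statement), but $B$ may contain an $e_1$-line whose only occupied vertex lies on the discarded face $F$, and such a line cannot fill inside $B$ on its own --- so $B$ and $F$ must be grown jointly rather than one after the other. Controlling this coupling --- keeping the line-covering hypothesis intact along every face that arises --- is the combinatorial heart, and is essentially the analysis of Schonmann; the probabilistic content of the lemma is just the union bound above. (An alternative, more in the spirit of standard bootstrap-percolation arguments, replaces the exact criterion by a nucleation-and-growth scheme followed by a renormalization, giving the same exponential rate with more overhead.)
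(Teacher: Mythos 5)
The paper does not prove this lemma at all --- it is quoted verbatim as Proposition~3.2 of Schonmann~\cite{schonmann} --- so your attempt is necessarily a from-scratch argument. Unfortunately its central combinatorial claim, the equivalence
\[
\zeta_0 \text{ internally spans } \Lambda_n \iff \text{every axis-parallel line of } \Lambda_n \text{ meets } \{\zeta_0=1\},
\]
is false in the sufficiency direction, and the whole proof rests on it. Here is a counterexample with $d=2$, $n=4$: take the occupied set $A=\{(0,1),(1,3),(2,0),(3,2)\}$. This is a transversal, so it meets every row and every column of $\Lambda_4$. But any two points of $A$ are at $\ell^\infty$-distance at least $2$ from each other, and under the MBP rule a vacant site $v$ can only become occupied if it has an occupied neighbor $u\in\{v\pm e_1\}$ \emph{and} an occupied neighbor $w\in\{v\pm e_2\}$, which forces $\|u-w\|_\infty=1$. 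Hence no vacant site is ever updated, the configuration is frozen, and $\Lambda_4$ is not internally spanned. (The same construction works for every $n\ge4$ by choosing a permutation $\pi$ of $\{0,\dots,n-1\}$ with $|\pi(i)-\pi(i+1)|\ge2$ for all $i$, e.g.\ listing odd values before even ones, and analogous sparse transversals exist in higher dimensions.) So the "structural statement'' you propose --- that the complement of a proper stable set contains a full line --- is wrong, and no amount of bookkeeping in the dimensional-reduction induction can repair it; the obstacle you flagged is a symptom of the criterion itself failing, not of the induction being delicate.

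The necessity direction and the union-bound calculus are correct, but they only show that the failure probability is \emph{at least} of order $(1-\theta)^n$; the content of the lemma is an upper bound on the failure probability, which is exactly the direction you cannot get this way. A correct proof must go through the nucleation-and-growth route you mention only parenthetically at the end: one needs a genuinely occupied seed (e.g.\ a fully occupied $2^d$-block, or more generally an internally spanned sub-box) from which a fully occupied rectangle grows, the growth at each step requiring the adjacent line \emph{segment} (not the full line) to contain an occupied site. Making that scheme yield the exponential rate $e^{-cn}$ uniformly in $n$ --- rather than merely a positive probability of spanning --- is the nontrivial part of Schonmann's argument, and it is precisely what your write-up omits.
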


\begin{remark} \label{mbp remark} The definition of the MBP process states that all vertices are updated simultaneously at each step. However, Lemma~\ref{bootstrap} will still hold under other updating rules.  The only property necessary for such an updating rule is that at each step, if there is a vertex that is vacant and can be made occupied, then some vertex is made occupied. That is, the order in which vertices are occupied does not matter, as long as no vertex is deliberately ignored.  This is the case if, for example, vertices attempt to update their states in continuous time according to independent Poisson processes.
\end{remark}

We intend to show that spanning in MBP implies fixation to the all $+1$ state for the coarsening model when $q=1$ and $p>0$.  A key observation in the case of $q=1$ is that a block of vertices initialized at $+1$ will remain $+1$ forever. Indeed, if all $x \in \Lambda_n$ (for $n \geq 2$) have $\sigma_x^0=+1$, then whenever any vertex attempts to flip, it has at least $d$ neighbors in the $+1$ state, and therefore will not flip. The precise statement follows.

\begin{lemma}
\label{cubes fixate}
If $q=1$, $n\geq 2$ and $\sigma^0_x= +1$ for all $x\in \Lambda_n$, then $\sigma^t_x = +1$ for all $x\in\Lambda_n$ and $t\geq 0$ almost surely.
\end{lemma}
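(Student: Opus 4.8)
The plan is to prove this by a direct stability argument: show that the all-$+1$ pattern on $\Lambda_n$ cannot be destroyed by any single update, and then invoke attractiveness/monotonicity (or a simple induction over update times) to conclude it is preserved for all time. Since the dynamics are almost surely a countable sequence of isolated update events (each vertex's Poisson clock rings at discrete times, no two clocks ring simultaneously a.s.), it suffices to check that if $\sigma^{s-}_x = +1$ for all $x \in \Lambda_n$ at the time $s$ of an update at some vertex $v$, then $\sigma^s_x = +1$ for all $x \in \Lambda_n$ as well.

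The key step is the local computation at the updating vertex $v$. If $v \notin \Lambda_n$, the update does not change any spin in $\Lambda_n$ and there is nothing to prove. If $v \in \Lambda_n$, I would split into two cases according to whether $v$ is on the boundary of $\Lambda_n$ or in its interior. In either case, because $n \geq 2$, the cube $\Lambda_n$ contains at least two vertices in each coordinate direction, so $v$ has at least $d$ of its $2d$ lattice neighbors lying inside $\Lambda_n$ — one can pick, for each coordinate $i$, a neighbor $v \pm e_i$ that stays in $\Lambda_n$ (for a boundary vertex, exactly the inward direction; for an interior vertex, both). By hypothesis all those neighbors carry spin $+1$ at time $s-$. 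Hence the energy at $v$ satisfies $e_v^{s-} = -\sum_{y \sim v}\sigma_v^{s-}\sigma_y^{s-} \le -d\cdot 1 + d\cdot 1 = 0$ if $\sigma_v^{s-} = +1$... more carefully: with $\sigma_v^{s-} = +1$ and at least $d$ neighbors equal to $+1$, the number of $-1$ neighbors is at most $d$, so $e_v^{s-} = \#\{-1\text{ neighbors}\} - \#\{+1\text{ neighbors}\} \le d - d = 0$. Thus either $e_v^{s-} < 0$, in which case the update rule keeps $\sigma_v^s = \sigma_v^{s-} = +1$; or $e_v^{s-} = 0$, which forces $q=1$ to give $\sigma_v^s = \xi_v^i = +1$ almost surely. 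The case $e_v^{s-} > 0$ cannot occur. So $v$'s spin stays (or becomes) $+1$, and all other spins are unchanged.

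Having established the single-step invariance, I would finish by induction on the (a.s. well-defined, increasing, locally finite) sequence of update times affecting $\Lambda_n$: the base case is the hypothesis at $t=0$, and each step is exactly the local computation above, noting that between consecutive update times no spin in $\Lambda_n$ changes. Taking a union bound over the countably many updates handles the ``almost surely'' qualifier coming from the tie-breaking variables $\xi_v^i$. Alternatively, and even more cleanly, one can appeal to monotonicity of the dynamics: start a coupled process from the initial configuration that is $+1$ on $\Lambda_n$ and $-1$ elsewhere; the configuration that is identically $+1$ on $\Z^d$ is a fixed point, lies above this initial configuration, and by attractiveness dominates it for all time, while on $\Lambda_n$ the bottom configuration already agrees with the top, and the local argument shows it is never pushed down. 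I do not expect any real obstacle here; the only mild subtlety is making the ``$n\ge2$ implies at least $d$ inward neighbors'' observation precise for boundary and corner vertices of $\Lambda_n$, which is routine.
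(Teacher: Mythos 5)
Your proof is correct and is essentially the argument the paper itself gives (the paper only offers the one-line justification preceding the lemma: since $n\ge 2$, every vertex of $\Lambda_n$ has at least $d$ neighbors inside $\Lambda_n$, so its energy is $\le 0$ at any update and with $q=1$ it a.s. stays $+1$). Your induction over the a.s. discrete update times and the union bound over the countably many tie-breaks make this precise in the natural way; only the closing "attractiveness" alternative is superfluous, as it still relies on the same local computation.
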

%\begin{proof}
%Each vertex in $\Lambda_n$ has at least two neighbors that are also in $\Lambda_n$, so when any vertex in $\Lambda_n$ attempts to flip, it has at least two neighbors in the $+1$ state.  Since $q=1$, that vertex remains in the $+1$ state, so no vertex in $\Lambda_n$ can ever change its state.
%\end{proof}

The next theorem says that in $\Z^d$, large boxes tend to fixate to all $+1$, regardless of the initial state outside of the box.

\begin{proposition}
\label{q=1 thm}
If  $p>0$ and $q=1$, then there exists $c>0$ (depending on $p$ and $d$) such that 
\[
\probsub{\lim_{t\to\infty} \sigma^t_x = +1 \text{ for all } x\in \Lambda_n \, \middle|\, \sigma^0_x = -1 \text{ for all } x\in \Lambda_n^c}{p,1} \ge 1-e^{-cn}.
\]
\end{proposition}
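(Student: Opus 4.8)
The plan is to coarsen $\Z^d$ into disjoint $2^d$-cubes and compare the coarsening dynamics, read off at the level of these cubes, to Modified Bootstrap Percolation (MBP), so that Lemma~\ref{bootstrap} applies. For $v\in\Z^d$ set $B_v:=2v+\{0,1\}^d$; the $B_v$ partition $\Z^d$ and each is a translate of $\Lambda_2$. Call $v$ \emph{good at time $t$} if $\sigma^t_x=+1$ for all $x\in B_v$, and let $\tau(v):=\inf\{t\ge0: v\text{ is good at }t\}\in[0,\infty]$. By Lemma~\ref{cubes fixate} (applied to the translate $B_v$ of $\Lambda_2$, together with the Markov property at the first time $B_v$ is all $+1$), good cubes stay good, so $\{t: v\text{ good at }t\}=[\tau(v),\infty)$; write $\mathcal S:=\{v:\tau(v)<\infty\}$. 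Also put $\mathcal O:=\{v:\tau(v)=0\}=\{v:\sigma^0\equiv+1\text{ on }B_v\}$, which, since the $B_v$ are disjoint and $\sigma^0$ is a product measure, is an i.i.d.\ $\mathrm{Bernoulli}(p^{2^d})$ subset of $\Z^d$.

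The crux is a \textbf{filling lemma}: if for every direction $i$ at least one of $B_{v+e_i},B_{v-e_i}$ is eventually good, then $\tau(v)<\infty$. To prove it, translate so $v=0$, $B=\{0,1\}^d$, fix $s_0<\infty$ after which, for each $i$, a chosen good neighbour $B_{s_ie_i}$ ($s_i\in\{\pm1\}$) is all $+1$ (by attractiveness one per direction suffices), and set $\epsilon^\ast_i=1$ iff $s_i=+1$. A direct check shows that for $x=\epsilon\in B$, the neighbour of $x$ outside $B$ in direction $i$ lies in $B_{+e_i}$ when $\epsilon_i=1$ and in $B_{-e_i}$ when $\epsilon_i=0$; hence that outside neighbour is $+1$ for all $t\ge s_0$ exactly when $\epsilon_i=\epsilon^\ast_i$, so $x$ has at least $d-H(\epsilon)$ permanently-$+1$ neighbours after $s_0$, where $H(\epsilon)=\#\{i:\epsilon_i\ne\epsilon^\ast_i\}$. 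I would then induct on $k$: there is a finite random time $T_k\ge s_0$ after which every $x=\epsilon$ with $H(\epsilon)\le k$ is $+1$. For $k=0$, the site $\epsilon^\ast$ has $d$ permanently-$+1$ neighbours after $s_0$, so at its next clock ring it updates to $+1$ (a strict majority, or a tie which resolves to $+1$ since $q=1$) and can never flip back, as at most $d$ of its $2d$ neighbours are ever $-1$. For the step, a site with $H(\epsilon)=k+1$ has, after $T_k$, its $k+1$ internal neighbours at Hamming distance $k$ permanently $+1$ and its $d-(k+1)$ ``good-direction'' outside neighbours permanently $+1$ --- $d$ in all --- so it too updates to $+1$ at its next ring and stays $+1$. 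Then $T_d<\infty$ gives $\tau(0)<\infty$. This is exactly where MBP rather than ordinary bootstrap percolation is forced: $B_{v+e_i}$ and $B_{v-e_i}$ share no vertex, so their $+1$'s assist disjoint halves of $B_v$, and if only these two neighbour cubes are good then every vertex of $B_v$ sees a single outside $+1$, which for $d\ge2$ is not enough to start filling $B_v$; help in all $d$ directions is needed.

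Granting the filling lemma, an induction on the MBP step at which a site is first occupied (invoking the lemma at each step; there is no circularity, since a good cube stays good irrespective of its neighbours) gives $\mathcal S\supseteq\mathrm{span}(\mathcal O)$. Now condition on $\{\sigma^0\equiv-1\text{ on }\Lambda_n^c\}$ and let $R_n:=\{v:B_v\subseteq\Lambda_n\}$, a translate of $\Lambda_{\lfloor n/2\rfloor}$ in the cube-lattice. The conditioning forces $\mathcal O\subseteq R_n$ but does not change the law of $\mathcal O$ on $R_n$, so conditionally $\mathcal O$ is i.i.d.\ $\mathrm{Bernoulli}(p^{2^d})$ on $R_n$ and empty outside; by Lemma~\ref{bootstrap} with $\theta=p^{2^d}>0$ (the continuous-time order of occupation being irrelevant by Remark~\ref{mbp remark}), $\mathcal O$ internally spans $R_n$ with probability at least $1-e^{-c_0\lfloor n/2\rfloor}$. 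On that event $\mathrm{span}(\mathcal O)\supseteq R_n$, so every vertex of $\bigcup_{v\in R_n}B_v=\Lambda_{2\lfloor n/2\rfloor}$ is eventually, and then permanently, $+1$. For even $n$ this union is $\Lambda_n$ and the proof is complete with a suitable $c>0$; for odd $n$ it is $\Lambda_{n-1}$, and the remaining $O(n^{d-1})$ vertices of $\Lambda_n\setminus\Lambda_{n-1}$ (which carry their own $\mathrm{Bernoulli}(p)$ spins and, once $\Lambda_{n-1}$ is all $+1$, fill in with exponentially small failure probability) require a separate lower-dimensional argument --- a case not needed for the application to Theorem~\ref{thm: main_thm}, where $n$ may be taken even. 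The scheme also covers $d=1$, with the bootstrap input trivial.

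I expect the filling lemma to be the main obstacle, both in pinning down the correct cube-level rule (it must be the MBP rule, because opposite neighbour cubes cannot pool their influence) and in handling the non-monotonicity of the coarsening dynamics: a vertex that has turned $+1$ while a cube is filling could a priori flip back, and the Hamming-distance induction is designed precisely so that each vertex, at the instant it turns $+1$, already has $d$ neighbours that are $+1$ forever, ruling this out. The MBP domination, the appeal to Lemma~\ref{bootstrap}, and the odd-$n$ boundary cleanup should be routine.
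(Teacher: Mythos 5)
Your argument is essentially the paper's own proof: the same coarsening of $\Z^d$ into disjoint $2^d$-blocks, the same comparison to MBP via Lemma~\ref{bootstrap}, and the same two-level induction (outer induction on the bootstrap step, inner induction within a block on Hamming distance from the ``well-served'' corner $\epsilon^\ast$), with Lemma~\ref{cubes fixate} supplying permanence of filled blocks. Your even-$n$ argument is complete and correct, including the key point that each vertex already has $d$ permanently-$+1$ neighbors at the moment it turns $+1$, so with $q=1$ it can never flip back. The only gap is the odd-$n$ case, which you explicitly leave open; note that your sketched fix (filling in $\Lambda_n\setminus\Lambda_{n-1}$ after $\Lambda_{n-1}$ fixates) is more delicate than you suggest, since each face vertex has a permanently $-1$ exterior neighbor and only one neighbor in $\Lambda_{n-1}$, so filling the face is a genuine $(d-1)$-dimensional spanning problem in its own right. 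The paper sidesteps this entirely: for odd $n$ it covers $\Lambda_n$ by the $2^d$ translates $x+\Lambda_{n-1}$ with $x\in\{0,1\}^d$, applies the even case to each (internal spanning of a sub-box depends only on the initial configuration inside it and forces fixation regardless of the exterior), and takes a union bound, costing only a factor $2^d$ in the failure probability.
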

\begin{proof}
When $q=1$, $\lim_{t\to\infty} \sigma^t_x$ exists almost surely for each $x\in \Z^d$ because the number of energy-lowering flips at $x$ is almost surely finite (see the remark after Theorem~3 in \cite{NNS}), and each vertex can undergo at most one energy-neutral flip (to $+1$).

Assume first that $n$ is even.  We identify $\sigma^0$, the initial spin configuration, with $\zeta_0$, an initial MBP configuration.  For each $v\in \Z^d$, we set $\zeta_0(v) = 1$ if $\sigma^0_x=+1$ for every $x\in 2v + \Lambda_2$ and $\zeta_0(v)=0$ otherwise. Then under $\probsub{\cdot}{p,1}$, the initial MBP configuration $\zeta_0\sim P_\theta$ is distributed according to product measure with probability $\theta = p^{2^d}$ for 1.  We claim that if $\zeta_0$ internally spans $\Lambda_{n/2}$, then $\lim_{t\to\infty} \sigma^t_v = +1$ for all $v\in \Lambda_n$.  To see why, we argue by induction on the bootstrap time step $j$. So suppose for a fixed $x\in \Lambda_{n/2}$ that $\zeta_{j}(x)=1$ for some $j\ge 1$ and also that for any $y \in \Lambda_{n/2}$ such that $\zeta_{j-1}(y)=1$, every vertex in $2y+\Lambda_2$ eventually fixates to $+1$ in the Glauber dynamics.  We need to show that every vertex in $2x+\Lambda_2$ eventually fixates to $+1$ in the Glauber dynamics. To start the induction, first note that if $\zeta_0(x)=1$, then for each $v\in 2x+\Lambda_2$, we have $\sigma^0_v = +1$, and by Lemma~\ref{cubes fixate}, $\sigma^t_v = +1$ for all $t \geq 0$.

If $\zeta_{j-1}(x)=1$, then by the induction hypothesis it follows that every vertex in $2x+\Lambda_2$ is eventually in the $+1$ state.  If $\zeta_{j-1}(x)=0$, then $x$ has $d$ neighbors, $y_1, \ldots, y_d\in \Lambda_{n/2}$, in different directions such that $\zeta_{j-1}(y_1)=\cdots=\zeta_{j-1}(y_d)=1$.  By symmetry of the lattice, we may suppose that these $d$ neighbors are $x-e_1, \ldots, x-e_d$.  We will now use induction on the distance from the vertex $2x$ to show that every vertex in $2x+\Lambda_2$ eventually fixates to $+1$ in the Glauber dynamics. Fix $0\le k \le d$, and suppose that for every $y\in \Lambda_2$ such that $\|y\|_1 \le k-1$ (this set is empty if $k=0$), the vertex $2x+y$ eventually fixates in the $+1$ state. Consider a vertex $2x+z$ with $z\in \Lambda_2$ such that $\|z\|_1 = k$; without loss of generality, suppose $z = e_1+\cdots+e_{k}$ (if $k=0$, then $z=0$).  For each $i\in [1,k]$, the vertex $2x+z - e_i$ is eventually in the state $+1$ by the induction hypothesis (on $k$), and for $i\in[k+1,d]$, the vertex $2x+z-e_i = 2(x-e_i) + (z+e_i) \in 2(x-e_i) + \Lambda_2$ is eventually in the $+1$ state, since $\zeta_{j-1}(x-e_i)=1$. Therefore, $2x+z$ eventually has $d$ neighbors frozen in the $+1$ state, and since it attempts to flip at arbitrarily large times, $2x+z$ will eventually fixate to $+1$, which concludes the induction on $k$.  Therefore, we have that the cube $2x+\Lambda_2$ will eventually fixate to $+1$, which finishes the induction on $j$.

Now we have 
\begin{align*}
 &\probsub{\lim_{t\to\infty} \sigma^t_x = +1 \text{ for all } x\in \Lambda_n \, \middle|\, \sigma^0_x = -1 \text{ for all } x\in \Lambda_n^c}{p,1} \\
 &\hspace{3cm} \ge\probsub{\zeta_0 \text{ internally spans } \Lambda_{n/2}}{p,1} \ge 1-e^{-c'n/2}
 \end{align*}
for all even $n$, where $c' = c'(d,p^{2^d})$ is the constant in Lemma~\ref{bootstrap}. 

When $n$ is odd, we can apply the even $n$ result to each of the $2^d$ boxes $x+\Lambda_{n-1}$ for $x\in\{0,1\}^d$. If all of these boxes have initial configurations that eventually flip to all $+1$, then every vertex in the box $\Lambda_n$ eventually flips to $+1$, so this happens with probability at least $1-2^de^{-c'(n-1)/2}$. By choosing $c<c'/2$ small enough, this gives the result for all $n$.
\end{proof}

Proposition~\ref{q=1 thm} will be used later in Section~\ref{sec: finish} to prove that large boxes can be made to have all $+1$ spins with high probability at a large time, conditional on the state outside the box, even when $q$ is $<1$ (but very close to 1). Specifically, the reader should see \eqref{box q*}, which states that for a given $\epsilon>0$ and $p>0$, there exist $L_0, t_0 > 0$ and $q^*<1$ such that
\[
\mathbb{P}_{p,q^*}\left( \sigma_x^{t_0}=+1 \text{ for all } x \in \Lambda_{L_0} \mid \sigma_x^0 = -1 \text{ for all } x \in \Lambda_{L_0}^c\right) > 1-\epsilon.
\]
This estimate will give us the initial scale $L_0$ at which we will apply Theorem~\ref{thm: block_FSS}.

\section{Decay of boxes}\label{section3}
Let $T$ be the time for the configuration in the $L^d$ rectangle, $\Lambda_L$, to reach all $+1$, when the dynamics (with $q$-biased tie breaking) is run with an initial configuration of all $-1$ inside $\Lambda_L$ and all $+1$ outside $\Lambda_L$. It was proved {by~\cite{FSS} for dimension $d=2$, \cite{CMST} for dimension $d=3$, and by~\cite{Lac} for dimension $d\ge 4$ that if $q=1/2$, then $T$ is at most order $L^2$ (up to logarithmic corrections) with high probability.} The precise result is as follows.
\begin{theorem}[{Theorem 1.3 in \cite{FSS}, }Theorem 3.1 in \cite{CMST} and Theorem 2.2 in \cite{Lac}]\label{box decay}
Let $q=1/2$ and {$d\ge 2$.} There exists a constant $c>0$ (not depending on $d$) such that
\[
\probsub{T \ge L^2 (\log L)^c}{} \le \frac{c}{L} \qquad \text{for all $L$.}
\]
\end{theorem}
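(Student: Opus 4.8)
This statement collects three results already in the literature, so the plan is not to reprove it from scratch but to indicate how each piece is obtained; the mechanism is cleanest in $d=2$. In every dimension I would first reduce the bound on $T$ to controlling the erosion of the corners of $\Lambda_L$. A $-1$ vertex adjacent to a flat face of the all-$+1$ exterior still sees a strict majority of $-1$ neighbors and is therefore stable, so the interface between the $-1$ region and the $+1$ region can move only near its edges and corners. Using attractiveness of the dynamics, one then compares the true droplet to a convex ``staircase'' interface and argues that it suffices to bound the time for this interface to recede past the center of the box.

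For $d=2$ this is the Fontes--Schonmann--Sidoravicius argument. Near a corner the interface is an infinite monotone lattice path, and encoding its increments as a particle configuration on $\mathbb{Z}$ turns each zero-energy flip at a convex corner of the path into a symmetric exclusion jump --- this is exactly where $q=\tfrac12$ is used; for $q\ne\tfrac12$ one instead gets the ASEP, which the rest of the paper exploits. The distance by which the corner has receded toward the center is then, up to constants, the integrated current of this SEP through a fixed site, and since that current is typically of order $\sqrt t$, receding a distance of order $L$ takes time of order $L^2$. The extra factor $(\log L)^c$ and the failure probability $c/L$ come from promoting this to a high-probability statement, either through a large-deviation estimate for the SEP current or, as in~\cite{FSS}, by dominating the interface by a family of independent random walks and taking a union bound over the $O(L)$ relevant sites along each of the four corners; one last union bound over the corners gives the conclusion in $d=2$.

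For $d\ge 3$ this one-dimensional reduction is unavailable: the erosion of a $d$-dimensional cube of $-1$'s is genuinely higher-dimensional and, in the scaling limit, resembles motion by mean curvature, so here I would simply invoke the quantitative shrinking estimates of~\cite{CMST} for $d=3$ and~\cite{Lac} for $d\ge 4$. This is also where the real difficulty lies, and the reason those papers are needed: one must control a $d$-dimensional erosion without an exactly solvable structure, via comparisons to coupled one-dimensional (polymer-pinning / Stefan-type) dynamics. The nontrivial output of those arguments --- and the point that makes the dimension-independent rates of Theorem~\ref{fixation times} possible --- is that the exponent $c$ on the logarithmic correction can be chosen not to depend on $d$.
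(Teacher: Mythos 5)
The paper gives no proof of this theorem — it is quoted verbatim as a combination of Theorem~1.3 of \cite{FSS} ($d=2$), Theorem~3.1 of \cite{CMST} ($d=3$), and Theorem~2.2 of \cite{Lac} ($d\ge4$) — and your proposal correctly treats it the same way, deferring to those references while accurately summarizing their mechanisms (the corner-erosion/SEP coupling for $d=2$, which is exactly the coupling the paper itself upgrades to ASEP in Figure~\ref{fig:ASEP_coupling}, and the mean-curvature-type comparisons of \cite{CMST,Lac} in higher dimensions). This matches the paper's approach; the only cosmetic remark is that for $d=2$ \cite{FSS} actually gives an exponential failure probability with no logarithmic correction, so the stated polynomial bound is simply the weakest common form across dimensions.
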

In fact, \cite{FSS} proved an exponential probability bound, without the logarithmic correction for $d=2$. Note that by monotonicity, Theorem~\ref{box decay} also holds when $q\ge 1/2$. However, in Assumption~\ref{assumption} below, we will require an exponential probability bound (not a polynomial one) on the erosion time of an $L^d$ box. In dimension $d=2$, we are able to prove that $T$ grows linearly with $L$ with an exponential probability bound, and in dimensions $d\ge 3$, we apply Theorem~\ref{box decay} in a straightforward manner to obtain a (nearly) cubic bound on $T$.
\begin{theorem}\label{exponential box decay}
Suppose $q>1/2$. There is a constant $C>0$ such that the following statements hold.
\begin{enumerate}
\item If $d=2$, then
\[
\probsub{T \ge CL}{q} \le e^{-L/C} \qquad \text{for all $L\ge 1$.}
\]
\item If $d=3$, then 
%there exists a constant $C'>0$ such that {\color{green} Seems like we can further increase $C'$ so that it holds for all $L$}
\[
\probsub{T \ge CL^2}{q} \le e^{-L/{C}} \qquad \text{for all $L\ge 1$.}
\]
\item If $d\ge 4$, 
\[
\probsub{T \ge L^3 (\log L)^{C}}{q} \le e^{-L/{C}} \qquad \text{for all $L\ge 3$.}
\]
\end{enumerate}
\end{theorem}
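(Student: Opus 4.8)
\emph{Plan.} The three bounds rest on two mechanisms: for $d=2$ the erosion of a corner of the box is literally the ASEP height function, and Theorem~\ref{thm:ASEP_theorem_intro} controls it; for $d\ge 3$ one ``peels off'' a dimension and/or runs a restart argument that converts a polynomial erosion‑time bound (Theorem~\ref{box decay}) into an exponential one. Throughout I will use one elementary observation: since $\Lambda_L=[0,L-1]^d$ is a box and $d\ge 2$, every vertex of $\Lambda_L^{c}$ has at most one neighbor in $\Lambda_L$, so a $+1$ spin outside $\Lambda_L$ can never collect the $\ge d+1$ minority neighbors needed to flip; hence $\sigma^t_x=+1$ for all $x\notin\Lambda_L$ and all $t\ge 0$, and in particular $\sigma^t\ge\sigma^0$ for every $t$ (where $\sigma^0$ is the ``$-1$ on $\Lambda_L$, $+1$ outside'' configuration).

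\emph{Case $d=2$.} Fix the corner $v^\star=(L-1,L-1)$ of $\Lambda_L$ and the quadrant $Q=\{(x,y):x\le L-1,\ y\le L-1\}$. By attractiveness, couple $\sigma$ with the process $\tilde\sigma$ started from $-1$ on $Q$ and $+1$ on $Q^{c}$, so $\tilde\sigma^t\le\sigma^t$; it is enough to show that the $+1$ phase of $\tilde\sigma$ engulfs $\Lambda_L$ in time $O(L)$. In the depth coordinates $(u,w)=\big((L-1)-x,(L-1)-y\big)$, the set of sites of $Q$ with spin $+1$ is, at every time, a Young‑diagram region $\{(u,w):w\le\lambda_u(t)\}$ with $\lambda_0(t)\ge\lambda_1(t)\ge\cdots$; since a clean staircase interface stays a clean staircase, its evolution is \emph{exactly} that of the height function of the ASEP with asymmetry $q$ from the step initial condition, with $v^\star$ in the role of the origin (an exposed $-1$ convex corner flips to $+1$ at rate $q$, a protruding $+1$ corner flips to $-1$ at rate $1-q$, straight stretches are inert). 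Because $\lambda$ is non‑increasing, $\Lambda_L=\{(u,w):0\le u,w\le L-1\}$ is engulfed as soon as the single cell $(L-1,L-1)$ is, and by the last‑passage/ASEP correspondence this is precisely the event $\{\mathsf{x}_{L-1}(\tau)\ge 0\}$ for the associated ASEP. Applying Theorem~\ref{thm:ASEP_theorem_intro} with a small fixed $\varepsilon\in(0,1)$ and $t$ chosen so that $L-1=\lfloor\frac{t}{4}(1-\varepsilon)\rfloor$ (so $t\asymp L$ and $\tau=t/\upgamma\asymp L$), the complementary probability is at most $\mathsf{C}e^{-t\Phi_+(\varepsilon)}$, giving $\mathbb{P}_{q}(T\ge CL)\le \mathsf{C}e^{-cL}$ with $c=c(q,\varepsilon)>0$; absorbing $\mathsf{C}$ for large $L$ and enlarging $C$ to cover small $L$ yields part~(1).

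\emph{Restart lemma and $d\ge 3$.} For any $\tau>0$ and $k\ge 1$, $\mathbb{P}_{q}(T\ge k\tau)\le\big(\mathbb{P}_{q}(T\ge\tau)\big)^{k}$: on $\{T\ge\tau\}$ the configuration $\sigma^\tau$ has its $-1$'s confined to $\Lambda_L$, hence $\sigma^\tau\ge\sigma^0$, so by the Markov property at $\tau$ and attractiveness one couples $(\sigma^{\tau+s})_{s\ge0}$ with an independent copy started from $\sigma^0$ lying below it, whence $\mathbb{P}_{q}(T\ge 2\tau\mid\mathcal F_\tau)\le\mathbb{P}_{q}(T\ge\tau)$ on $\{T\ge\tau\}$, and one iterates. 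For $d\ge 4$, apply this with $\tau=L^{2}(\log L)^{c_0}$ and the bound $\mathbb{P}_{q}(T\ge\tau)\le c_0/L$ of Theorem~\ref{box decay} (valid since $q\ge\tfrac12$): choosing $k=\lceil L/\log L\rceil$ makes $(c_0/L)^{k}\le e^{-L/2}$ while $k\tau\le 2L^{3}(\log L)^{c_0-1}$, which after adjusting constants is part~(3). For $d=3$ the restart lemma is applied with a \emph{linear}-order input. To produce one I would flip to $+1$ all spins of $\Lambda_L$ with even last coordinate (this only raises $\sigma^0$, so erosion of the reduced configuration suffices); the even hyperplanes then stay frozen at $+1$ and act as a favorable external field on each of the $\lceil L/2\rceil$ disjoint and independent two‑dimensional slabs of $-1$'s, and in each such slab the erosion proceeds at least as fast as field‑free two‑dimensional erosion of the same box, hence finishes in time $O(L)$ with failure probability $e^{-cL}$ by part~(1). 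A union bound over slabs gives $\mathbb{P}_{q}(T\ge C_0L)\le e^{-cL}$, which already implies (more than) part~(2); alternatively one invokes directly the corner‑growth speed estimates of~\cite{CMST}.

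\emph{Main obstacle.} The crux is Case~$d=2$: making the dictionary between the coarsening corner and the ASEP/last‑passage picture exact, so that ``$\Lambda_L$ engulfed'' is literally the single‑particle (current) event to which Theorem~\ref{thm:ASEP_theorem_intro} applies, and tracking the dependence of $C$ on $q$ — which must blow up as $q\downarrow\tfrac12$, since there the erosion becomes diffusive. The secondary point requiring care is verifying that the external‑field slab reduction for $d=3$ really preserves an \emph{exponential} probability bound (equivalently, that the extra frozen $+1$'s do not spoil the comparison with the field‑free, i.e.\ ASEP‑driven, two‑dimensional erosion), since it is a linear‑order input — not a Theorem~\ref{box decay}‑type input — that controls the relevant time scale.
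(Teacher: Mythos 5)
Your $d=2$ argument (dominate the box by the quadrant, identify the staircase interface with step-initial ASEP, reduce engulfment of $\Lambda_L$ to the single event $\{\mathsf{x}_{L-1}(\cdot)\ge 0\}$, and invoke Theorem~\ref{thm:ASEP_theorem_intro}) is exactly the paper's proof of part~1 (via Corollary~\ref{cor:sigma_ASEP_bound}), and your restart lemma plus Theorem~\ref{box decay} for $d\ge 4$ is the paper's restarting argument for part~3, with a slightly different choice of the number of restarts. Both are fine.

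The $d=3$ step, however, has the monotonicity running in the wrong direction. Flipping to $+1$ all spins of $\Lambda_L$ with even last coordinate produces a configuration $\tilde\sigma^0\ge\sigma^0$; by attractiveness $\tilde\sigma^t\ge\sigma^t$ for all $t$, so the modified process reaches all $+1$ \emph{no later} than the original one. Proving that the modified (easier) configuration erodes in time $O(L)$ therefore gives no upper bound at all on $T$ for the original all-$(-1)$ box: in the true dynamics the even hyperplanes start at $-1$ and must themselves be eroded, and your frozen-$+1$-hyperplane picture never accounts for that. (Your claimed conclusion $\mathbb{P}_q(T\ge C_0L)\le e^{-cL}$ for $d=3$ should itself have been a warning sign: the paper explicitly remarks that it expects, but cannot prove, a linear bound for $d\ge 3$.) The fix is to perturb the dynamics in the \emph{slower} direction, as the paper does: subdivide $\Lambda_L$ into the $L$ slices $\{i\}\times[0,L-1]^2$ and forbid flips in slice $i$ until slice $i-1$ is entirely $+1$. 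This constrained process is dominated by the original one, and once slice $i-1$ is all $+1$ a vertex of slice $i$ has one $+1$ neighbor behind it and one $-1$ neighbor ahead of it (slice $i+1$ being still frozen at $-1$), so these cancel and slice $i$ evolves exactly as the two-dimensional box of part~1, decaying in time $O(L)$ with failure probability $e^{-L/C'}$; a union bound over the $L$ slices gives $T\le CL^2$ with probability $\ge 1-Le^{-L/C'}$, which is part~2. Your fallback of ``invoking \cite{CMST} directly'' does not rescue the claim either: Theorem~\ref{box decay} only provides a polynomial failure probability $c/L$, and feeding it into your restart lemma yields the $L^3(\log L)^C$ bound of part~3, not the $CL^2$ bound of part~2.
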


\begin{remark}
We suspect that Theorem~\ref{exponential box decay} is essentially sharp (with suboptimal constant $C$) only in case 1 ($d=2$). In general, we expect the bound in case 1 to hold for all $d\ge 2$.
\end{remark}

\begin{proof}
Part $1$ is a direct consequence of attractiveness and Corollary~\ref{cor:sigma_ASEP_bound} below. Part $2$ follows from part $1$ by subdividing the three-dimensional box $[0,L-1]^3$ into $L$ two-dimensional slices, $\{i\}\times [0,L-1]^2$ for $i=0,\ldots, L-1$. Consider the slowed-down dynamics in which the slices must decay in lexicographic order. That is, the vertices in $\{i\}\times [0,L-1]^2$ are not allowed to flip until all of the vertices in $\{i-1\}\times [0,L-1]^2$ are in the $+1$ state. By attractiveness, the original dynamics dominate these slowed-down dynamics. If $T_i$ is the time for the $i^\text{th}$ layer to decay (after layer $i-1$ has decayed) in these slowed-down dynamics, and $C'$ is the constant from part $1$, which we may assume is larger than $1/2$ without loss of generality, then by part $1$,
$$
\probsub{T \ge C'L^2}{} \le \sum_{i=0}^{L-1} \probsub{T_i \ge C'L}{} \le L e^{-L/C'} \le e^{-L/(2C')}
$$
for all $L\ge 4 (2C')^2$. This shows the statement of part 2 with $C = 2C'$ for such $L$. To handle $L \leq 4(2C')^2$, we simply increase $C$.

We now prove part $3$ of the theorem using a restarting argument. Suppose $d\ge 4$, let $c>0$ be the constant from Theorem~\ref{box decay}, and for $k\ge 0$, let $t_k = k L^2 (\log L)^c$. Initialize the configuration $\sigma^0$ to be all $-1$ inside $\Lambda_L$ and all $+1$ outside at time $t=0$. Now define a new Markov process $(\tilde\sigma^t)_{t\ge0}$ as follows. In the time intervals $t\in (t_{k-1},t_k)$, we let $\tilde\sigma^t$ follow the same rules as $\sigma^t$.  At the times $t\in \{t_k : k\ge 1\}$, if there exists $x\in \Lambda_L$ such that $\tilde\sigma_x^{t-} = -1$, then set $\tilde\sigma_y^t = -1$ for all $y\in \Lambda_L$. By the obvious coupling, we can construct $\tilde\sigma^t$ on the same probability space as $\sigma^t$ such that $\tilde\sigma^t_x \leq \sigma^t_x$ for all $t\ge 0$ and $x\in \Z^d$. Since the all $+1$ state is absorbing for $\tilde\sigma^t$, we now have
\begin{equation}
\begin{aligned}
\prob{T \ge L^3 (\log L)^c} &\le \prob{\tilde\sigma^{t_L}_x = -1 \text{ for some $x\in\Lambda_L$}}\\
&= \prod_{k=1}^L \prob{\tilde\sigma^{t_k-}_x = -1 \text{ for some $x\in\Lambda_L$} \ \middle|\ \tilde\sigma^{t_{k-1}}_y = -1 \text{ for all $y\in \Lambda_L$}}\\
& = \left[\prob{T\ge L^2 (\log L)^c}\right]^L\\
&\leq (c/L)^L = \exp[- L\log (L/c)],
\end{aligned}
\end{equation}
which proves part $3$ for large $L$ if $C\ge c$. By choosing $C$ large enough, the statement holds for all~$L\ge 3$.
\end{proof}

The remainder of this section is devoted to proving the ASEP bound of Theorem~\ref{thm:ASEP_theorem_intro}
which, along with attractiveness, implies the following corollary for the $d=2$ coarsening model:
\begin{corollary}
	\label{cor:sigma_ASEP_bound}
	Consider the two-dimensional coarsening model $\{\sigma^t_x\}$, $x=(x^1,x^2)\in\mathbb{Z}^2$,
	with the initial condition 
	$\sigma^0_x=-1$ in $\{(x^1,x^2): x^1\ge 1, x^2\ge 1\}$ and $\sigma^0_x=+1$ elsewhere, and
	$q>1/2$. Fix any $\varepsilon\in(0,1)$.
	There exists $\mathsf{C}>0$ such that for all sufficiently large $t>0$ we have
	\begin{equation}\label{coarsening_2d_estimate}
		\mathbb{P}\left( \textnormal{$\sigma^{t/(2q-1)}_x=+1$ for all $x=(x^1,x^2)$ with 
		$1\le x^{i}\le \lfloor \tfrac{t}{4}(1-\varepsilon) \rfloor$
for $i=1,2$} \right)\ge 1-\mathsf{C} e^{-t \Phi_+(\varepsilon)},
	\end{equation}
	where $\Phi_+(\varepsilon)$ is given by \eqref{I_epsilon} below.
\end{corollary}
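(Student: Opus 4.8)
The plan is to realize the two-dimensional coarsening model with this corner initial condition as an \emph{exact} copy of the ASEP started from the step initial condition, and then read off \eqref{coarsening_2d_estimate} directly from Theorem~\ref{thm:ASEP_theorem_intro}. Write $Q=\{(x^1,x^2):x^1\ge 1,\ x^2\ge 1\}$ for the closed first quadrant and let $\mu(s)=\{x\in Q:\sigma^s_x=+1\}$ be the set of $+1$ sites inside $Q$ at time $s$.

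First I would check that, almost surely, $\mu(s)$ is a Young diagram for every $s\ge 0$ (it is empty at $s=0$) and that no spin outside $Q$ ever flips. Indeed, if $\mu(s)$ is a Young diagram, then a $-1$ site of $Q$ has at most two $+1$ neighbors, necessarily the ``down'' and ``left'' ones, so it never undergoes an energy-lowering flip and flips to $+1$ (at rate $q$, from a tie) only when it is an outer corner of $\mu(s)$; likewise a $+1$ site of $Q$ has at most two $-1$ neighbors, so it flips to $-1$ (at rate $1-q$, from a tie) only when it is an inner corner of $\mu(s)$; and a $+1$ site outside $Q$ has at most one $-1$ neighbor, hence never flips. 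Since adding an outer corner or deleting an inner corner preserves the Young-diagram property, the claim follows by induction along the jumps of the (locally finite) Poisson clocks. Attractiveness is used here only to keep this identification, and the later reduction of a finite $-1$ box to the quadrant configuration in part~1 of Theorem~\ref{exponential box decay}, on firm footing.

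Next I would set up the standard corner-growth $\leftrightarrow$ ASEP dictionary (as in~\cite{Rost} for $q=1$). Traverse the interface between $\mu(s)$ and the $-1$ region from the $x^2$-axis to the $x^1$-axis, recording each unit South step as a particle and each unit East step as a hole; this produces a configuration on $\mathbb{Z}$ with a rightmost particle. The empty diagram $\mu(0)=\varnothing$ yields the step configuration $\mathsf{x}_j=-j$, and adding (resp.\ deleting) a corner of $\mu(s)$ is exactly a right (resp.\ left) jump of the corresponding particle. Since in both models the relevant clock rings at rate $1$ and the move is then executed with probability $q$ for a right jump and $1-q$ for a left jump, the process $s\mapsto(\text{particle configuration of }\mu(s))$ is, in law, the ASEP of the Introduction from the step initial condition, \emph{at the same time parameter}. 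Because $\mu(s)$ is a Young diagram, $\{\sigma^s_x=+1\text{ for all }x\in[1,M]^2\}=\{(M,M)\in\mu(s)\}$, and under the dictionary this event is precisely $\{\mathsf{x}_M(s)\ge 0\}$ (equivalently $\{\mathfrak{h}_0(s)\ge M\}$), with complement $\{\mathsf{x}_M(s)<0\}$.

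Finally, taking $s=t/(2q-1)=t/\upgamma$ and $M=\lfloor\tfrac t4(1-\varepsilon)\rfloor=m$, Theorem~\ref{thm:ASEP_theorem_intro} gives $\mathbb{P}_{\mathrm{step},q}(\mathsf{x}_m(t/\upgamma)<0)\le \mathsf{C}e^{-t\Phi_+(\varepsilon)}$ for all large $t$, which is exactly \eqref{coarsening_2d_estimate}. The only genuine work is the bookkeeping in the last two steps: verifying the Young-diagram invariant, and — the point most likely to require care — checking that the indices match on the nose, i.e.\ that the $M$-th ASEP particle corresponds to the diagonal cell $(M,M)$ and that coarsening time maps to ASEP time with no rescaling, so that the parameters $m$ and $t/\upgamma$ of Theorem~\ref{thm:ASEP_theorem_intro} coincide with those in the statement. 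All of the analytic difficulty is already packaged into Theorem~\ref{thm:ASEP_theorem_intro}.
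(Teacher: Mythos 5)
Your proposal is correct and follows essentially the same route as the paper: identify the quadrant coarsening dynamics with the corner growth/decay model, couple it to ASEP with the step initial condition so that $\{\sigma^s_{(M,M)}=+1\}=\{\mathsf{x}_M(s)\ge 0\}$ (the paper records this as $\mathbb{P}(\sigma^t_{(m,\ell)}=-1)=\mathbb{P}_{\mathrm{step},q}(\mathsf{x}_\ell(t)<m-\ell)$), and then invoke Theorem~\ref{thm:ASEP_theorem_intro} with $m=\lfloor\frac{t}{4}(1-\varepsilon)\rfloor$ and time $t/\upgamma$. The paper's proof is just this coupling stated tersely with a figure; your write-up supplies the Young-diagram invariant and index/time bookkeeping that the paper leaves implicit, and the rate-matching is correctly handled at the level of generators.
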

\begin{proof}[Proof of Corollary~\ref{cor:sigma_ASEP_bound} modulo Theorem~\ref{thm:ASEP_theorem_intro}]
	The coarsening model with the quadrant initial configuration described in the hypothesis
	is the same as the corner growth and decay model
	with exponential waiting times. 
	The latter model can be coupled to the ASEP with the step
	initial configuration described before 
	Theorem~\ref{thm:ASEP_theorem_intro}, such that
	\begin{equation}\label{ASEP_coupling}
		\mathbb{P}\bigl( \sigma^t_{(m,\ell)}=-1 \bigr)=
		\mathbb{P}_{\mathrm{step},q}\left( \mathsf{x}_\ell(t)<m-\ell \right),\qquad m,\ell \ge1,
	\end{equation}
	see Figure~\ref{fig:ASEP_coupling}.
	Here 
	under the ASEP $\left\{ \mathsf{x}_m(t) \right\}$
	the probabilities of right and left jumps 
	are $q$ and $1-q$, respectively.
	Applying Theorem~\ref{thm:ASEP_theorem_intro} we get the desired estimate
	\eqref{coarsening_2d_estimate}.
	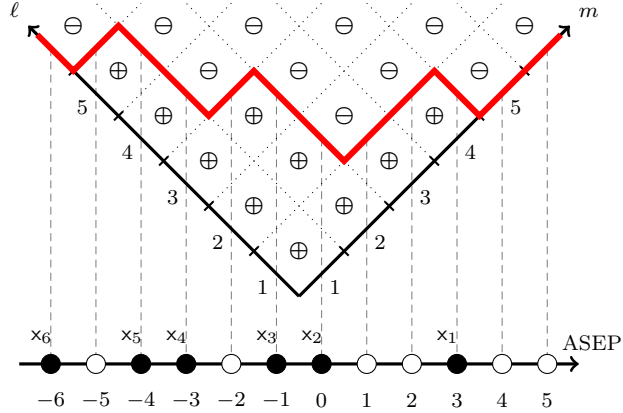
\begin{figure}[htpb]
		\centering
		\begin{tikzpicture}
			[scale=.6]
			\draw[->,very thick] (0,0)--++(6,6) node[above right] {$m$};
			\draw[->,very thick] (0,0)--++(-6,6) node[above left] {$\ell$};
			\draw[->,very thick] (-6.2,-1.5)--++(12.4,0) node[above,yshift=2,xshift=5] {ASEP};
			\foreach \ii in {1,...,5}
			{
				\draw[thick] (\ii+.1,\ii-.1)--++(-.2,.2);
				\draw[thick] (-\ii-.1,\ii-.1)--++(.2,.2);
				\draw[dotted] (\ii,\ii)--++(\ii-6.5,6.5-\ii);
				\draw[dotted] (-\ii,\ii)--++(6.5-\ii,6.5-\ii);
				\node at (\ii-.2,\ii-.8) {$\ii$};
				\node at (-\ii+.2,\ii-.8) {$\ii$};
			}
			\foreach \kk in {(0,1),(1,2),(-1,2),(2,3),(0,3),(-2,3),(3,4),(-3,4),(-4,5),(-1,4)}
			{
				\node at \kk {$\boldsymbol \oplus$};
			}
			\foreach \kk in {(4,5),(2,5),(5,6),(3,6),(1,6),(-1,6),(-3,6),(-5,6),(-2,5),(0,5)}
			{
				\node at \kk {$\boldsymbol \ominus$};
			}
			\node at (1,4) {$\boldsymbol \ominus$};
			\draw[densely dashed, opacity=0.5] (-5.5,5.5) --++ (0,-7);
			\draw[densely dashed, opacity=0.5] (-4.5,5.5) --++ (0,-7);
			\draw[densely dashed, opacity=0.5] (-3.5,5.5) --++ (0,-7);
			\draw[densely dashed, opacity=0.5] (-2.5,4.5) --++ (0,-6);
			\draw[densely dashed, opacity=0.5] (-1.5,4.5) --++ (0,-6);
			\draw[densely dashed, opacity=0.5] (-0.5,4.5) --++ (0,-6);
			\draw[densely dashed, opacity=0.5] (0.5,3.5) --++ (0,-5);
			\draw[densely dashed, opacity=0.5] (1.5,3.5) --++ (0,-5);
			\draw[densely dashed, opacity=0.5] (2.5,4.5) --++ (0,-6);
			\draw[densely dashed, opacity=0.5] (3.5,4.5) --++ (0,-6);
			\draw[densely dashed, opacity=0.5] (4.5,4.5) --++ (0,-6);
			\draw[densely dashed, opacity=0.5] (5.5,5.5) --++ (0,-7);
			\draw[red,line width=2.3] (-5.8,5.8)--(-5,5)--++(1,1)--++(2,-2)--++(1,1)--++(2,-2)--++(2,2)--++(1,-1)--++(1.8,1.8);
			\foreach \pp in {-6,-4,-3,-1,0,3}
			{
				\draw[fill] (\pp+.5,-1.5) circle(6pt);
			}
			\node at (3.3,-.9) {$\mathsf{x}_1$};
			\node at (0.3,-.9) {$\mathsf{x}_2$};
			\node at (-0.7,-.9) {$\mathsf{x}_3$};
			\node at (-2.7,-.9) {$\mathsf{x}_4$};
			\node at (-3.7,-.9) {$\mathsf{x}_5$};
			\node at (-5.7,-.9) {$\mathsf{x}_6$};
			\foreach \pp in {5,4,2,1,-2,-5}
			{
				\draw[fill=white] (\pp+.5,-1.5) circle(6pt);
			}
			\foreach \xx in {-6,...,5}
			{
				\node at (\xx+.5,-2.3) {$\xx$};
			}
		\end{tikzpicture}
		\caption{%
			Coupling between the coarsening model and the ASEP. Rotate the quadrant by $45^\circ$,
			draw an interface between the ``$+$'' and ``$-$'' 
			states, and place a particle of the ASEP under each part of the interface of 
			slope $-1$. Right and left jumps of particles correspond
			to growth and decay of the interface, respectively.
			This coupling implies \eqref{ASEP_coupling}.%
		}
		\label{fig:ASEP_coupling}
	\end{figure}
\end{proof}

%----------------------------------------------------------------------------------------------------------------------------------------
%----------------------------------------------------------------------------------------------------------------------------------------%----------------------------------------------------------------------------------------------------------------------------------------%----------------------------------------------------------------------------------------------------------------------------------------

\begin{proof}[Proof of Theorem~\ref{thm:ASEP_theorem_intro}]
	{\bf Step 1. Pre-limit Fredholm determinant.}
	We first recall a formula from \cite[Lemma 4]{TW_asymptotics} for
	$\mathbb{P}_{\mathrm{step},q}(\mathsf{x}_m(t/\upgamma)>0)$ with any fixed
	$m\ge1$, $t>0$ as an integral of a Fredholm determinant (recall that $\upgamma=2q-1$). 
	We need some notation. Let $\uptau:=(1-q)/q\in(0,1)$, and denote
	\begin{equation*}
		\mathsf{f}_{\uptau}(\upmu,z):=\sum_{k\in\mathbb{Z}}\frac{\uptau^k}{1-\uptau^k\upmu}\,z^k.
	\end{equation*}
	This series converges for $1<|z|<\uptau^{-1}$, and extends analytically to all
	$z\ne0$ with poles at $\uptau^{\Z}$.
	Also set
	$\upphi_{t}(\upzeta):=e^{t \upzeta/(1- \upzeta)}$.
	Fix any $\mathsf{r}\in(\uptau,1)$
	and define
	the kernel 
	\begin{equation}\label{kernel_J}
		\mathsf{J}^{(\upmu)}_{m,t}(\upeta,\upeta'):=\frac{1}{2\pi\mathbf{i}}\int_{|\upzeta|=\mathsf{r}'}
		\frac{\upphi_{t}(\upzeta)\upzeta^{m}}{\upphi_{t}(\upeta')(\upeta')^{m+1}}
		\frac{\mathsf{f}_{\uptau}(\upmu,\upzeta/\upeta')}{\upzeta-\upeta}\,d \upzeta,
	\end{equation}
	where $\mathsf{r}'$ is any number in 
	$(1,\mathsf{r}/\uptau)$.\footnote{%
		Here and below by a kernel we mean a function 
		of two variables belonging to a certain space. A kernel can be associated
		with an integral operator acting on functions on this space.%
	}
	For the kernel $\mathsf{J}^{(\upmu)}_{m,t}(\upeta,\upeta')$
	the variables $\upeta,\upeta'$ belong to a circle in the complex plane with
	center zero and radius $\mathsf{r}$.
	
	We will need the Fredholm determinant of the kernel \eqref{kernel_J}
	of the form $\det\bigl( \mathbf{1}+\upmu\, \mathsf{J}^{(\upmu)}_{m,t} \bigr)$.
	Here $\mathbf{1}$ is the identity operator, and this Fredholm determinant can be defined by a convergent 
	series 
	\begin{equation}\label{Fredholm_determinant}
		\det\bigl( \mathbf{1}+\upmu\, \mathsf{J}^{(\upmu)}_{m,t} \bigr)
		=
		1+
		\sum_{n=1}^{\infty}\frac{\upmu^n}{n!}\frac{1}{(2\pi\mathbf{i})^n}
		\idotsint\limits_{|\upeta_j|=\mathsf{r}}
		\mathop{\mathrm{det}}_{i,j=1}^{n}[\mathsf{J}^{(\upmu)}_{m,t}(\upeta_i,\upeta_j)]\,
		d\upeta_1 \ldots d\upeta_n.
	\end{equation}
	That is, one forms determinants of growing order out of the kernel
	$\upmu\, \mathsf{J}^{(\upmu)}_{m,t}(\upeta_i,\upeta_j)$ and integrates them over the direct powers of the 
	circle $|\upeta|=\mathsf{r}$. 
	We refer to \cite{Bornemann} for a review of Fredholm determinants.

	We will utilize \cite[Lemma 4]{TW_asymptotics} which states that 
	for the ASEP with the step initial configuration,
	\begin{equation}\label{ASEP_TW_formula}
		\mathbb{P}_{\mathrm{step},q}\left( \mathsf{x}_m(t/\upgamma)>0 \right)
		=\frac{1}{2\pi\mathbf{i}}\int_{|\upmu|=\mathsf{R}}
		(\upmu;\uptau)_{\infty}\cdot
		\det\bigl( \mathbf{1}+\upmu\, \mathsf{J}^{(\upmu)}_{m,t} \bigr)
		\frac{d\upmu}{\upmu},
	\end{equation}
	where $\mathsf{R}\in(\uptau,+\infty)\setminus
	\left\{ 1,\uptau^{-1},\uptau^{-2},\ldots  \right\}$
	is fixed and $(\upmu;\uptau)_{\infty}:=(1-\upmu)(1-\upmu\uptau)(1-\upmu\uptau^{2})\ldots $
	is the infinite $\uptau$-Pochhammer symbol.
	Note that in \eqref{ASEP_TW_formula} the sign in the left-hand side is ``$>$''
	as opposed to \cite{TW_asymptotics} because we consider the step initial configuration
	with particles packed to the left of the origin, and in \cite{TW_asymptotics} the 
	particles are packed to the right of it.
	
	\bigskip
	\noindent
	{\bf Step 2. Critical points.}
	Our goal is to understand the asymptotic behavior of 
	the right-hand side of \eqref{ASEP_TW_formula}
	as $t\to\infty$ and $m=\lfloor \frac{t}{4}(1-\varepsilon) \rfloor$ with fixed $\varepsilon\in(0,1)$
	(see Remark~\ref{rmk:ASEP_more_discussion} below for a discussion of what should be expected
	when $\varepsilon\le 0$).
	Let us focus on the integrand in \eqref{kernel_J} and rewrite it as
	\begin{equation}\label{exponents_S_action}
		\frac{\upphi_{t}(\upzeta)(-\upzeta)^{m}}{\upphi_{t}(\upeta')(-\upeta')^{m}\upeta'}
		\frac{\mathsf{f}_{\uptau}(\upmu,\upzeta/\upeta')}{\upzeta-\upeta}
		=
		\exp
		\Bigl\{ t \bigl( S(\upzeta) - S(\upeta')\bigr)\Bigr\}
		\left( \frac{\upzeta}{\upeta'} \right)^{m-\frac{t}{4}(1-\varepsilon)}
		\frac{\mathsf{f}_{\uptau}(\upmu,\upzeta/\upeta')}{\upeta'(\upzeta-\upeta)},
	\end{equation}
	where
	\begin{equation}\label{action_S}
		S(\upzeta):=\frac{\upzeta}{1-\upzeta}+
		\frac{1-\varepsilon}{4}\log(-\upzeta).
	\end{equation}
	We have changed the signs to $(-\upzeta)$ and $(-\upeta')$ for later convenience;
	note that the term $(\upzeta/\upeta')^{m-t(1-\varepsilon)/4}$ always stays bounded as $t\to\infty$
	because $m=\lfloor \frac{t}{4}(1-\varepsilon) \rfloor $.

	Having all the essential dependence on $t$ in the exponent, we employ a standard idea that
	the asymptotic behavior of the integral in \eqref{kernel_J} and subsequently
	of the whole Fredholm determinant \eqref{Fredholm_determinant} can be derived
	by the steepest descent method. The critical points
	of $S(\upzeta)$ are found from the following equation equivalent to $S'(\upzeta)=0$:
	\begin{equation*}
		(1+\upzeta)^2-\varepsilon(1-\upzeta)^2=0,
		\qquad\qquad 
		\upzeta^{(1)}=\frac{\sqrt\varepsilon-1}{\sqrt\varepsilon+1},
		\quad
		\upzeta^{(2)}=\frac{1}{\upzeta^{(1)}}=\frac{\sqrt\varepsilon+1}{\sqrt\varepsilon-1}.
	\end{equation*}
	When $\varepsilon=0$ these two roots coincide (and are both equal to $-1$)
	leading to the GUE Tracy--Widom asymptotics
	derived in \cite{TW_asymptotics}. 
	When $\varepsilon>0$ the roots $\upzeta^{(1)}, \upzeta^{(2)}$ are real and distinct. 
	Moreover, they satisfy
	$\upzeta^{(1)}\in(-1,0)$, $\upzeta^{(2)}\in(-\infty,-1)$.
	For future convenience we need the following expressions:
	\begin{equation*}
		S''(\upzeta^{(1)})=
		-
		\frac{(1+\sqrt{\varepsilon})^3 \sqrt{\varepsilon}}{4 (1-\sqrt\varepsilon)}<0
		,
		\qquad 
		S''(\upzeta^{(2)})=
		\frac{(1-\sqrt{\varepsilon})^3 \sqrt{\varepsilon}}{4 (1+\sqrt\varepsilon)}>0.
	\end{equation*}
	Let us define
	\begin{equation}
		\label{tilde_I_epsilon}
		\hat \Phi_+(\varepsilon):=
		S(\upzeta^{(1)})-S(\upzeta^{(2)})
		=
		\sqrt{\varepsilon}-(1-\varepsilon)\tanh^{-1}(\sqrt\varepsilon),
	\end{equation}
	where the expression in the right-hand side is a straightforward computation.
	Since $\hat \Phi_+(0)=0$ and $\frac{\partial}{\partial\varepsilon}\hat \Phi_+(\varepsilon)=\tanh^{-1}(\sqrt\varepsilon)>0$
	which is positive for $0<\varepsilon<1$ and behaves as $\sim\sqrt\varepsilon$ as $\varepsilon\to0+$,
	we have
	\begin{equation*}
		\text{$\hat \Phi_+(\varepsilon)>0$ for $0<\varepsilon<1$,
		\qquad 
		$\hat \Phi_+(\varepsilon)\sim\frac{2}{3}\varepsilon^{\frac{3}{2}}$ as $\varepsilon\to0+$.}
	\end{equation*}

	Let us now choose the radii $\mathsf{r}$ and $\mathsf{r}'$ in 
	\eqref{kernel_J}, \eqref{Fredholm_determinant}
	so that the contours for $\upzeta$ and the $\upeta_j$'s pass
	through our single critical points $\upzeta^{(1)},\upzeta^{(2)}$. 
	Namely, set
	\begin{equation}
		\label{radii_choice}
		\mathsf{r}:=|\upzeta^{(1)}|,\qquad 
		\mathsf{r}':=|\upzeta^{(2)}|.
	\end{equation}
	For the conditions $\mathsf{r}\in(\uptau,1)$ and $\mathsf{r}'\in(1,\mathsf{r}/\uptau)$
	to hold for \eqref{radii_choice} we need to assume that $\varepsilon$ is not too large,
	namely,
	\begin{equation}\label{bound_on_epsilon}
		0<\varepsilon<\varepsilon^\circ,\qquad \qquad 
		\varepsilon^\circ:=\left( \frac{1-\sqrt\uptau}{1+\sqrt\uptau} \right)^2.
	\end{equation}
	One can readily check that \eqref{bound_on_epsilon} implies 
	$\uptau<\mathsf{r}<1$ and $1<\mathsf{r}'<\mathsf{r}/\uptau$.
	
	Because the particles in the ASEP are ordered,
	the claim of Theorem~\ref{thm:ASEP_theorem_intro} 
	for $\varepsilon\in(0,\varepsilon^\circ)$
	would also imply the claim for 
	$\varepsilon\in(\varepsilon^\circ,1)$ 
	if we truncate the function $\hat \Phi_+(\varepsilon)$ \eqref{tilde_I_epsilon}.
	Namely, let us define
	$\Phi_+(\varepsilon)$ to be equal to 
	$\hat \Phi_+(\varepsilon)$ for 
	$0<\varepsilon<\varepsilon^\circ$ and to 
	$\hat \Phi_+(\varepsilon^\circ)$ otherwise, that is,
	\begin{equation}
		\label{I_epsilon}
		\Phi_+(\varepsilon):=
		\begin{cases}
			\sqrt{\varepsilon}-(1-\varepsilon)\tanh^{-1}(\sqrt\varepsilon),& 0<\varepsilon<
			\varepsilon^\circ
			;\\
			\sqrt{\varepsilon^\circ}-(1-\varepsilon^\circ)\tanh^{-1}(\sqrt{\varepsilon^\circ})
			,&
			\varepsilon^\circ
			\le \varepsilon<1.
		\end{cases}
	\end{equation}
	Thus defined $\Phi_+(\varepsilon)$ is weakly increasing in $\varepsilon\in(0,1)$.
	Therefore, throughout the rest of the proof we can and will assume 
	that $\varepsilon$ is bounded as in 
	\eqref{bound_on_epsilon}.
	
	\bigskip
	\noindent
	{\bf Step 3. Estimates of the real part of $S$.}
	Our next goal is to estimate the real part of the function $S$ \eqref{action_S}
	on the circles
	with the radii \eqref{radii_choice}. 
	Namely, let us show that for any $\varepsilon\in(0,1)$:
	\begin{equation}\label{max_min_steepest_estimates}
		\parbox{.9\textwidth}{\begin{enumerate}
			\item For any $\upzeta\in \mathbb{C}$ with $|\upzeta|=\mathsf{r}=|\upzeta^{(1)}|$, $\upzeta\ne\upzeta^{(1)}$
				we have $\Re S(\upzeta)>\Re S(\upzeta^{(1)})$.
			\item For any $\upzeta\in \mathbb{C}$ with $|\upzeta|=\mathsf{r}'=|\upzeta^{(2)}|$, $\upzeta\ne\upzeta^{(2)}$
				we have $\Re S(\upzeta)<\Re S(\upzeta^{(2)})$.
		\end{enumerate}}
	\end{equation}
	These estimates follow from the 
	straightforward computations:
	\begin{equation*}
		\frac{\partial}{\partial\uptheta}\Re S(\upzeta^{(1)}e^{\mathbf{i}\uptheta})=
		\frac{(1-\varepsilon) \sqrt{\varepsilon} \sin \uptheta }
		{(1+\varepsilon+(1-\varepsilon)\cos\uptheta)^2}>0,\qquad \uptheta\in(0,\pi),
	\end{equation*}
	and 
	\begin{equation*}
		\frac{\partial}{\partial\uptheta}\Re S(\upzeta^{(2)}e^{\mathbf{i}\uptheta})=
		-
		\frac{(1-\varepsilon) \sqrt{\varepsilon} \sin \uptheta }
		{(1+\varepsilon+(1-\varepsilon)\cos\uptheta)^2}<0,\qquad \uptheta\in(0,\pi)
		.
	\end{equation*}
	These signs of these derivatives establish \eqref{max_min_steepest_estimates}
	for $\upzeta$ in the lower half plane. 
	To obtain these inequalities in the upper half plane one needs to take 
	$\upzeta=-\upzeta^{(1)}e^{\mathbf{i}\uptheta}$ (and similarly for $\upzeta^{(2)}$)
	which leads to the opposite signs of the derivatives.
	See Figure~\ref{fig:epsilon_steepest_descent} for an illustration.
	\begin{figure}[htpb]
		\centering
		\includegraphics[width=.49\textwidth]{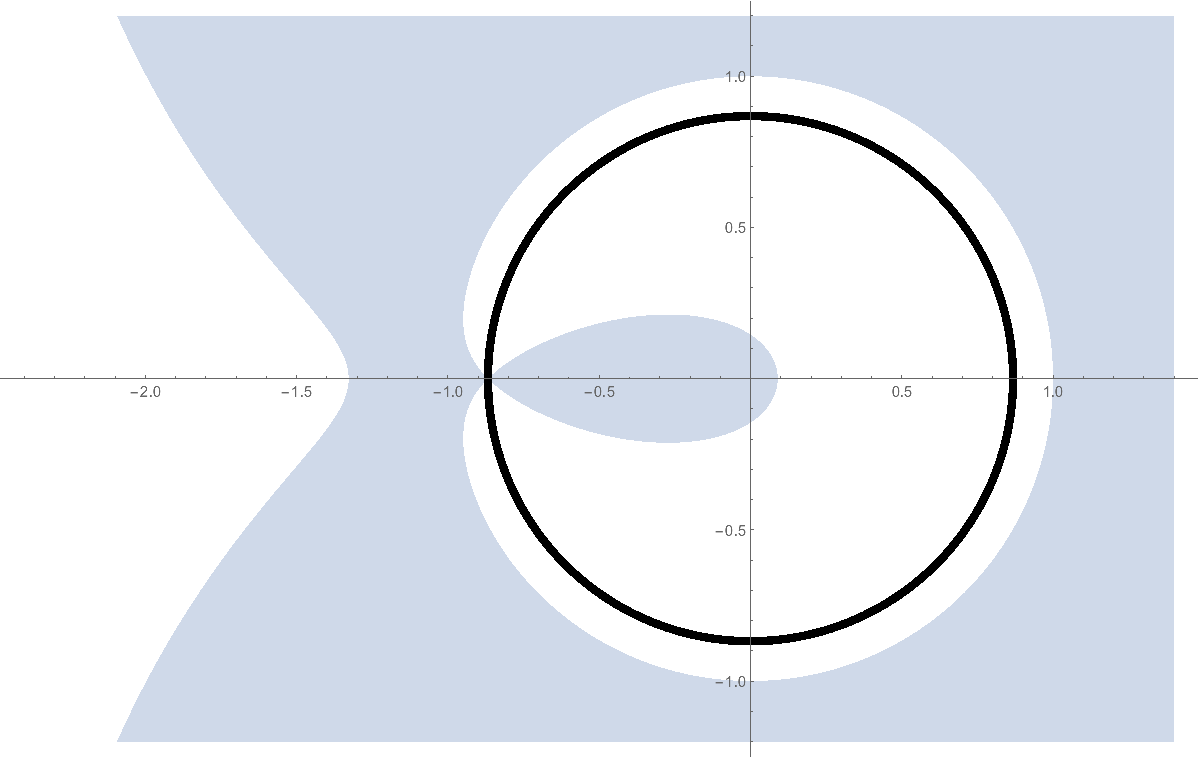}
		\includegraphics[width=.49\textwidth]{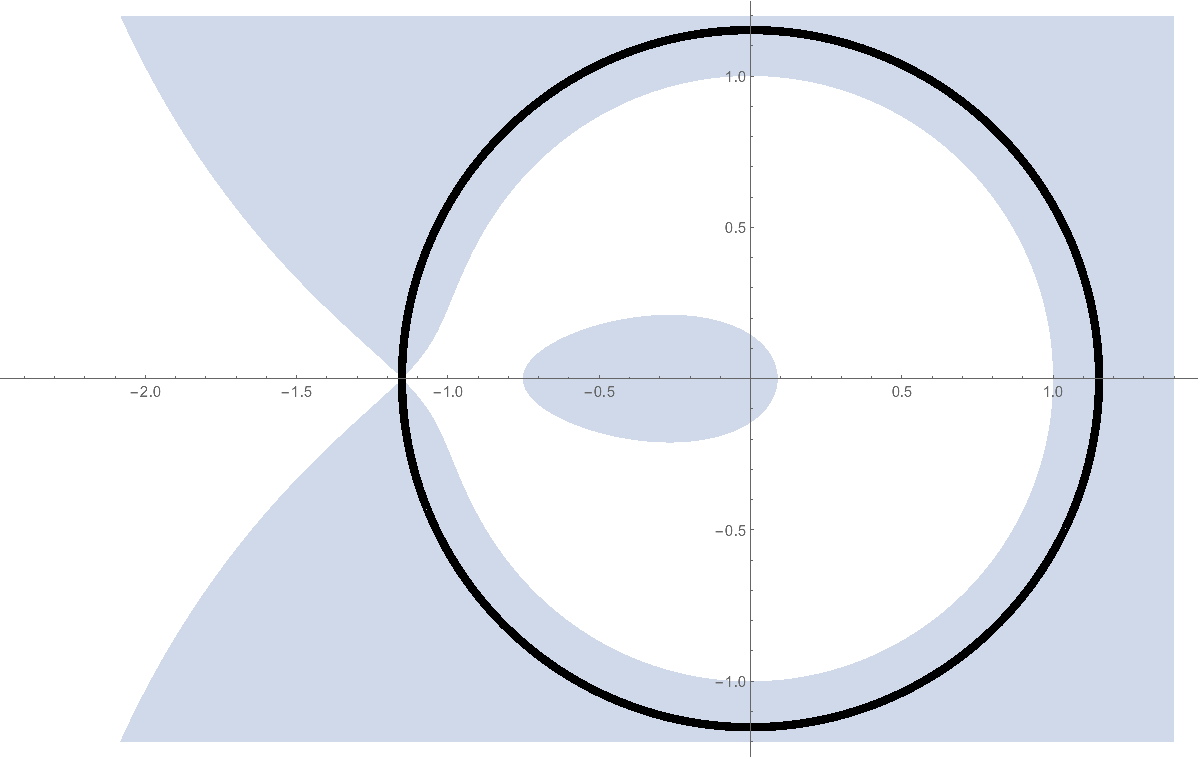}
		\caption{%
			The circles $|\upzeta|=|\upzeta^{(i)}|$, where $i=1$ on the left and 
			$i=2$ on the right.
			Shaded are the regions where $\Re S(\upzeta)<\Re S(\upzeta^{(i)})$.
			The circles intersect the negative real line at points $\upzeta^{(i)}$.%
		}
		\label{fig:epsilon_steepest_descent}
	\end{figure}

	\bigskip
	\noindent
	{\bf Step 4. Steepest descent asymptotics of $\mathsf{J}^{(\upmu)}_{m,t}(\upeta,\upeta')$.}
	Fix $\upeta,\upeta'$ on a circle of radius $\mathsf{r}$.
	In this step we argue that the main contribution to the asymptotics of 
	$\mathsf{J}^{(\upmu)}_{m,t}(\upeta,\upeta')$
	as an integral 
	over $\upzeta$ with $|\upzeta|=\mathsf{r}'$
	comes from a small neighborhood of $\upzeta^{(2)}$.
	All error terms we obtain here and in the next step
	are uniform in $\upmu$ because the contour for $\upmu$
	is chosen so that $|\mathsf{f}_{\uptau}(\upmu,\upzeta/\upeta')|$
	is bounded uniformly in $\upmu$.
	
	The $\upzeta$-dependence in the integrand in
	$\mathsf{J}^{(\upmu)}_{m,t}(\upeta,\upeta')$
	has the form
	\begin{equation}
		\label{eq:zeta_part_of_the_integrand_ASEP}
		e^{t S(\upzeta)}
		\upzeta^{m-\frac{t}{4}(1-\varepsilon)}
		\frac{\mathsf{f}_{\uptau}(\upmu,\upzeta/\upeta')}{\upzeta-\upeta},
	\end{equation}
	see 
	\eqref{exponents_S_action}.
	Take a neighborhood of $\upzeta^{(2)}$ of size $t^{-3/8}$.
	Let us use
	\eqref{max_min_steepest_estimates} and the estimates of the derivatives 
	in the previous step
	to bound the absolute value of the 
	integral of 
	\eqref{eq:zeta_part_of_the_integrand_ASEP}
	over the 
	part of the circle $|\upzeta|=\mathsf{r}'$
	outside this neighborhood of $\upzeta^{(2)}$.
	We have 
	\begin{align*}
		\bigl|\eqref{eq:zeta_part_of_the_integrand_ASEP}\bigr|
		&
		\le
		\exp
		\bigl[ t\max
			\bigl\{ 
				\Re S(\upzeta)- S(\upzeta^{(2)}) \colon |\upzeta|=\mathsf{r}',\;
				|\upzeta-\upzeta^{(2)}|>t^{-3/8} 
			\bigr\} 
		\bigr]
		e^{t S(\upzeta^{(2)})}
		|\upzeta|^{m-\frac{t}{4}(1-\varepsilon)}
		\left|
			\frac{\mathsf{f}_{\uptau}(\upmu,\upzeta/\upeta')}{\upzeta-\upeta}
		\right|
		\\&
		\le
		\mathsf{C} 
		e^{-\mathsf{c}t^{5/8}}
		\cdot
		e^{t S(\upzeta^{(2)})}
	\end{align*}
	for some $\mathsf{C}, \mathsf{c}>0$ (recall that $m-\frac{t}{4}(1-\varepsilon)$ is at most one in absolute value, so 
	we simply estimate this power of $|\upzeta|$ by a constant).
	Inside the $t^{-3/8}$-neighborhood of $\upzeta^{(2)}$ make a change of variables
	\begin{equation*}
		\upzeta=\upzeta^{(2)}-\mathbf{i}\frac{u}{\sqrt t},\qquad  |u|<t^{1/8}.
	\end{equation*}
	Here
	the minus sign in the second term accounts for the direction of the contour in the neighborhood
	of $\upzeta^{(2)}$, and the bound $|u|<t^{1/8}$ corresponds to the size 
	$t^{-3/8}$ of the neighborhood.
	We have
	\begin{equation*}
		tS(\upzeta)
		=
		tS(\upzeta^{(2)})
		+
		t\,\frac{S''(\upzeta^{(2)})}{2}( \upzeta-\upzeta^{(2)} )^2
		+
		O\bigl(t(\upzeta-\upzeta^{(2)})^3\bigr)
		=
		tS(\upzeta^{(2)})
		-
		\frac{S''(\upzeta^{(2)})}{2}\,u^2+O(t^{-1/8}).
	\end{equation*}
	Using \eqref{exponents_S_action} 
	and the fact that $S''(\upzeta^{(2)})>0$, 
	we can rewrite the integral in
	$\mathsf{J}^{(\upmu)}_{m,t}$ \eqref{kernel_J} 
	as 
	\begin{align*}
		\mathsf{J}^{(\upmu)}_{m,t}(\upeta,\upeta')
		&=
		\frac{1+O(e^{-\mathsf{c}t^{5/8}})}{2\pi\mathbf{i}}
		\,e^{tS(\upzeta^{(2)})-tS(\upeta')}
		\int\limits_{\substack{|\upzeta|=\mathsf{r}'\\|\upzeta-\upzeta^{(2)}|<t^{-3/8}}}
		e^{ t S(\upzeta) - tS(\upzeta^{(2)})}
		\left( \frac{\upzeta}{\upeta'} \right)^{m-\frac{t}{4}(1-\varepsilon)}
		\frac{\mathsf{f}_{\uptau}(\upmu,\upzeta/\upeta')}{\upeta'(\upzeta-\upeta)}
		\,d \upzeta
		\\&=
		\frac{1+O(t^{-1/8})}{2\pi\mathbf{i}}
		\,e^{tS(\upzeta^{(2)})-tS(\upeta')}
		\left( \frac{\upzeta^{(2)}}{\upeta'} \right)^{m-\frac{t}{4}(1-\varepsilon)}
		\frac{\mathsf{f}_{\uptau}(\upmu,\upzeta^{(2)}/\upeta')}{\upeta'(\upzeta^{(2)}-\upeta)}
		\int_{-\infty}^{\infty} \frac{(-\mathbf{i})e^{-\frac12S''(\upzeta^{(2)})u^2}}{\sqrt t} 
		\,du.
	\end{align*}
	The last integral is a convergent Gaussian integral, and thus we obtain
	\begin{equation}\label{J_asymptotics}
		\mathsf{J}^{(\upmu)}_{m,t}(\upeta,\upeta')
		=
		-\frac{1+O(t^{-1/8})}{\sqrt{2\pi t S''(\upzeta^{(2)})}}
		\,e^{tS(\upzeta^{(2)})-tS(\upeta')}
		\left( \frac{\upzeta^{(2)}}{\upeta'} \right)^{m-\frac{t}{4}(1-\varepsilon)}
		\frac{\mathsf{f}_{\uptau}(\upmu,\upzeta^{(2)}/\upeta')}{\upeta'(\upzeta^{(2)}-\upeta)}.
	\end{equation}
	Note that the constant in $O(t^{-1/8})$ can be taken independent of 
	$\upmu,\upeta,\upeta'$ on our contours
	because the quantity 
	$\bigl|(\upzeta/\upeta')^{m-\frac{t}{4}(1-\varepsilon)}
	\frac{\mathsf{f}_{\uptau}(\upmu,\upzeta/\upeta')}{\upeta'(\upzeta-\upeta)}\bigr|$
	is bounded away from zero and infinity.

	\smallskip
	\noindent
	{\bf Step 5. Asymptotics of the Fredholm determinant.}
	We see that \eqref{J_asymptotics} approximates
	$\mathsf{J}^{(\upmu)}_{m,t}(\upeta,\upeta')$ 
	(viewed as an operator) as $t\to\infty$
	by a rank one operator.
	Therefore, the $n\times n$ determinants entering the Fredholm determinant 
	\eqref{Fredholm_determinant} are simplified as 
	\begin{equation*}
		\mathop{\mathrm{det}}_{i,j=1}^{n}[\mathsf{J}^{(\upmu)}_{m,t}(\upeta_i,\upeta_j)]
		=
		\mathop{\mathrm{det}}_{i,j=1}^{n}[1+t^{-\frac18}\tilde{\mathsf{J}}^{(\upmu)}_{m,t}(\upeta_i,\upeta_j)]
		\cdot
		\prod_{i=1}^{n}
		\frac{-e^{tS(\upzeta^{(2)})-tS(\upeta_i)}}{\sqrt{2\pi t S''(\upzeta^{(2)})}}
		\left( \frac{\upzeta^{(2)}}{\upeta_i} \right)^{m-\frac{t}{4}(1-\varepsilon)}
		\frac{\mathsf{f}_{\uptau}(\upmu,\upzeta^{(2)}/\upeta_i)}{\upeta_i(\upzeta^{(2)}-\upeta_i)},
	\end{equation*}
	where the terms $t^{-\frac18}\tilde{\mathsf{J}}^{(\upmu)}_{m,t}(\upeta_i,\upeta_j)$
	correspond to $O(t^{-1/8})$ in \eqref{J_asymptotics}.\footnote{%
		Note that here ``$1$'' in the determinant in the right-hand side is
		simply the constant $1$ entering every matrix element, and not the identity operator.%
	}
	Applying Lemma~\ref{lemma:rank_one_approximation} (see below) to the determinant
	in the right-hand side we see that it is bounded by $B^nn^{n/2+1}t^{-\frac{1}{8}(n-1)}$,
	where $B$ is a bound on $\tilde{\mathsf{J}}^{(\upmu)}_{m,t}$.
	Thus, the whole Fredholm determinant \eqref{Fredholm_determinant} can be rewritten as
	\begin{equation}
		\label{Fredholm_estimate_1}
		\det\bigl( \mathbf{1}+\upmu\, \mathsf{J}^{(\upmu)}_{m,t} \bigr)
		=
		1+
		\frac{\upmu}{2\pi\mathbf{i}}\int_{|\upeta|=\mathsf{r}}
		\mathsf{J}^{(\upmu)}_{m,t}(\upeta,\upeta)\,d\upeta
		+\mathsf{Remainder},
	\end{equation}
	where
	\begin{equation}
		\label{Fredholm_estimate_2}
		\bigl|\mathsf{Remainder}\bigr|\le
		\sum_{n=2}^{\infty}
		\frac{(B\upmu)^nn^{n/2+1}}{t^{\frac{1}{8}(n-1)}n!}\cdot
		\left(
			\frac{1}{2\pi\mathbf{i}}\int_{|\upeta|=\mathsf{r}}
			\left|
			\frac{e^{tS(\upzeta^{(2)})-tS(\upeta)}}{\sqrt{2\pi t S''(\upzeta^{(2)})}}
			\left( \frac{\upzeta^{(2)}}{\upeta} \right)^{m-\frac{t}{4}(1-\varepsilon)}
			\frac{\mathsf{f}_{\uptau}(\upmu,\upzeta^{(2)}/\upeta)}{\upeta(\upzeta^{(2)}-\upeta)}\right|d\upeta
		\right)^n.
	\end{equation}
	Both the main term corresponding to $n=1$ in \eqref{Fredholm_estimate_1}
	and the integrals in the remainder in \eqref{Fredholm_estimate_2}
	can be estimated using the steepest descent method with the 
	help of \eqref{max_min_steepest_estimates} similarly to Step 4 above.

	First, for the single integral in \eqref{Fredholm_estimate_1} we have
	(in particular, using the fact that the constant in $O(t^{-1/8})$ is independent
	of $\upeta$)
	\begin{equation*}
		\frac{\upmu}{2\pi\mathbf{i}}
		\int_{|\upeta|=\mathsf{r}}\mathsf{J}^{(\upmu)}_{m,t}(\upeta,\upeta)\,d\upeta
		=
		-\frac{\upmu(1+O(t^{-1/8}))}{2\pi\mathbf{i}\sqrt{2\pi t S''(\upzeta^{(2)})}}
		\int_{|\upeta|=\mathsf{r}}
		e^{tS(\upzeta^{(2)})-tS(\upeta)}
		\left( \frac{\upzeta^{(2)}}{\upeta} \right)^{m-\frac{t}{4}(1-\varepsilon)}
		\frac{\mathsf{f}_{\uptau}(\upmu,\upzeta^{(2)}/\upeta)}{\upeta(\upzeta^{(2)}-\upeta)}
		\,d\upeta.
	\end{equation*}
	The main contribution to the integral 
	over $|\upeta|=\mathsf{r}$ 
	comes from a small neighborhood of the critical point 
	$\upzeta^{(1)}$ (recall that $\mathsf{r}=|\upzeta^{(1)}|$).
	Making a change of variables $\upeta=\upzeta^{(1)}-\mathbf{i}\frac{u}{\sqrt t}$
	leads to a convergent Gaussian integral of $e^{\frac{1}{2}S''(\upzeta^{(1)})u^2}$;
	note that $S''(\upzeta^{(1)})<0$. Therefore, we can continue
	as
	\begin{align*}
		\frac{\upmu}{2\pi\mathbf{i}}
		\int_{|\upeta|=\mathsf{r}}\mathsf{J}^{(\upmu)}_{m,t}(\upeta,\upeta)\,d\upeta
		&=
		\frac{\upmu e^{-t\hat \Phi_+(\varepsilon)}(1+O(t^{-1/8}))}
		{2\pi t\sqrt{ |S''(\upzeta^{(1)})|S''(\upzeta^{(2)})}}
		\left( \frac{\upzeta^{(2)}}{\upzeta^{(1)}} \right)^{m-\frac{t}{4}(1-\varepsilon)}
		\frac{\mathsf{f}_{\uptau}(\upmu,\upzeta^{(2)}/\upzeta^{(1)})}{(-\upzeta^{(1)})(\upzeta^{(1)}-\upzeta^{(2)})}
		\\&=
		\frac{\upmu e^{-t\hat \Phi_+(\varepsilon)}(1+O(t^{-1/8}))}
		{2\pi t \varepsilon}
		\left( \frac{1+\sqrt\varepsilon}{1-\sqrt\varepsilon} \right)^{1+2m-\frac{t}{2}(1-\varepsilon)}
		\mathsf{f}_{\uptau}(\upmu,\upzeta^{(2)}/\upzeta^{(1)}).
	\end{align*}
	Similarly to Step 4, the constant in $O(t^{-1/8})$ can be taken independent of $\upmu$.
	
	In the remainder \eqref{Fredholm_estimate_2} we can similarly bound each integral 
	by $B_1t^{-1}e^{-t\hat \Phi_+(\varepsilon)}$ (with $B_1$ independent of $\upmu$).
	Therefore, the series in \eqref{Fredholm_estimate_2} converges thanks to the factorial in the denominator, and its sum
	behaves as $O(t^{-\frac{17}{8}}e^{-2t\hat \Phi_+(\varepsilon)})$,
	which is exponentially negligible compared to the main contribution
	in \eqref{Fredholm_estimate_1}.

	\bigskip
	\noindent
	{\bf Step 6. Completing the proof.}
	Putting all together and using \eqref{ASEP_TW_formula} we see that
	\begin{align*}
		&\mathbb{P}_{\mathrm{step},q}\left( \mathsf{x}_m(t/\upgamma)>0 \right)
		\\&=
		\frac{1}{2\pi\mathbf{i}}\int_{|\upmu|=\mathsf{R}}
		(\upmu;\uptau)_{\infty}
		\left( 
			1
			+
			\frac{\upmu e^{-t\hat \Phi_+(\varepsilon)}}
			{2\pi t \varepsilon}
			\left( \frac{1+\sqrt\varepsilon}{1-\sqrt\varepsilon} \right)^{1+2m-\frac{t}{2}(1-\varepsilon)}
			\mathsf{f}_{\uptau}(\upmu,\upzeta^{(2)}/\upzeta^{(1)}) \bigl(1+O(t^{-1/8})\bigr)
		\right)
		\frac{d\upmu}{\upmu},
	\end{align*}
	where the remainder \eqref{Fredholm_estimate_2} (the sum of the terms with $n\ge2$) 
	is also incorporated into the $O(t^{-1/8})$ term which can be taken independent of $\upmu$. 
	Recall that $m=\lfloor \frac{t}{4}(1-\varepsilon) \rfloor $, so the term 
	$\left( \frac{1+\sqrt\varepsilon}{1-\sqrt\varepsilon} \right)^{1+2m-\frac{t}{2}(1-\varepsilon)}$
	stays bounded as $t\to\infty$.
	Observe that 
	\begin{equation*}
		\frac{1}{2\pi\mathbf{i}}\int_{|\upmu|=\mathsf{R}}
		(\upmu;\uptau)_{\infty}
		\frac{d\upmu}{\upmu}=1
	\end{equation*}
	as the residue at zero, 
	and that 
	\begin{equation*}
		\frac{1}{2\pi\mathbf{i}}\int_{|\upmu|=\mathsf{R}}
		(\upmu;\uptau)_{\infty}\,
		\mathsf{f}_{\uptau}(\upmu,\upzeta^{(2)}/\upzeta^{(1)})\,d\upmu
		=-\sum_{k=1}^{\infty}(\uptau^k;\uptau)_{\infty}(\upzeta^{(1)}/\upzeta^{(2)})^k=
		-
		\frac{\upzeta^{(1)}/\upzeta^{(2)}(\uptau;\uptau)_\infty}{(\upzeta^{(1)}/\upzeta^{(2)};\uptau)_{\infty}}
	\end{equation*}
	by residues using the definition of $\mathsf{f}_{\uptau}$,
	and by the $q$-binomial theorem. 
	Note that the latter quantity is real and negative as should be.
	This completes the proof of the large deviation bound of 
	Theorem~\ref{thm:ASEP_theorem_intro}.
\end{proof}

The following lemma is employed in the above proof of Theorem~\ref{thm:ASEP_theorem_intro}:
\begin{lemma}
	\label{lemma:rank_one_approximation}
	Let $J(\upeta,\upeta')
	=
	1
	+
	t^{-\updelta}\tilde J(\upeta,\upeta')$,
	where $\updelta>0$.
	Then 
	for any $n\ge1$ and all $t>0$ large enough we have
	\begin{equation*}
		\left|
			\mathop{\mathrm{det}}_{i,j=1}^{n}\left[ J(\upeta_i,\upeta_j) \right]
		\right|
		\le 
		\frac{B^n n^{n/2+1}}{t^{\updelta(n-1)}},
	\end{equation*}
	where $B=1+\max\bigl\{ |\tilde J(\upeta_i,\upeta_j)| \colon 1\le i,j\le n \bigr\}$.
\end{lemma}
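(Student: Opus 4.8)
The plan is to exploit the obvious rank-one structure of the matrix. As an $n\times n$ matrix, $[J(\upeta_i,\upeta_j)]_{i,j=1}^{n}=\mathbf{1}_n+t^{-\updelta}\tilde M$, where $\mathbf{1}_n$ is the all-ones matrix and $\tilde M:=[\tilde J(\upeta_i,\upeta_j)]_{i,j=1}^{n}$ has entries of absolute value at most $B-1$. Since $\mathbf{1}_n$ has rank one, its determinant vanishes as soon as $n\ge2$, and more generally any matrix obtained from $\tilde M$ by replacing two or more of its columns with the all-ones column has a repeated column and hence zero determinant. This should force $\det[J(\upeta_i,\upeta_j)]$ to be $O(t^{-\updelta(n-1)})$, and the prefactor $n^{n/2+1}B^{n}$ will come out of the Hadamard inequality.

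Concretely, I would first expand $\det[J(\upeta_i,\upeta_j)]$ by multilinearity in the columns, writing the $j$-th column as (all-ones column)$\,+\,t^{-\updelta}\cdot$($j$-th column of $\tilde M$), to obtain
\[
\mathop{\mathrm{det}}_{i,j=1}^{n}[J(\upeta_i,\upeta_j)]=\sum_{S\subseteq\{1,\ldots,n\}}t^{-\updelta|S|}\,D_S,
\]
where $D_S$ is the determinant of the matrix whose columns are the columns of $\tilde M$ at the indices in $S$ and the all-ones column at the indices outside $S$. The first key step is that $D_S=0$ whenever $|S|\le n-2$, since then at least two columns equal the all-ones vector. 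Hence only the terms with $|S|=n-1$ (there are $n$ such sets) and $|S|=n$ (one set) survive:
\[
\mathop{\mathrm{det}}_{i,j=1}^{n}[J(\upeta_i,\upeta_j)]=t^{-\updelta(n-1)}\sum_{j=1}^{n}D_{\{1,\ldots,n\}\setminus\{j\}}+t^{-\updelta n}D_{\{1,\ldots,n\}}.
\]
The second step is Hadamard's inequality applied to each surviving determinant. A matrix counted in the first sum has one column equal to the all-ones vector, of Euclidean norm $\sqrt n$, and $n-1$ columns taken from $\tilde M$, each of Euclidean norm at most $\sqrt n\,(B-1)\le\sqrt n\,B$, so its determinant is at most $\sqrt n\,(\sqrt n\,B)^{n-1}=n^{n/2}B^{n-1}$; the last determinant is $\det\tilde M$, bounded by $(\sqrt n\,B)^{n}=n^{n/2}B^{n}$. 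Combining, $\bigl|\mathop{\mathrm{det}}_{i,j=1}^{n}[J(\upeta_i,\upeta_j)]\bigr|\le n^{n/2}B^{n-1}t^{-\updelta(n-1)}\bigl(n+B\,t^{-\updelta}\bigr)$.

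Finally I would absorb the $|S|=n$ term: for fixed $n$ and all $t$ large enough one has $B\,t^{-\updelta}\le n(B-1)$ when $B>1$ (and when $B=1$ then $\tilde M\equiv0$, so the determinant is $0$ for $n\ge2$), hence $n+B\,t^{-\updelta}\le Bn$ and the bound becomes $B^{n}n^{n/2+1}t^{-\updelta(n-1)}$; the case $n=1$ is immediate from $|1+t^{-\updelta}\tilde J(\upeta_1,\upeta_1)|\le B$. In the application in Step~5 the quantity playing the role of $B$ is a bound on $\tilde{\mathsf J}^{(\upmu)}_{m,t}$ that is uniform in $\upmu$ and in the contour variables, so the threshold "$t$ large enough" is uniform over the relevant parameters. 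I do not anticipate a real obstacle: the argument is elementary once the rank-one part $\mathbf{1}_n$ is isolated. The only point that needs care is the constant bookkeeping — noting that each $|S|=n-1$ determinant carries only $n-1$ perturbed columns (hence $B^{n-1}$, not $B^{n}$), which leaves exactly one spare factor of $B$ available to absorb the lone $|S|=n$ term for large $t$, and disposing of the edge cases $n=1$ and $B=1$.
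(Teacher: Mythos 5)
Your proposal is correct and follows essentially the same route as the paper: isolate the rank-one all-ones part, use multilinearity to see that all terms of order $t^{-\updelta k}$ with $k\le n-2$ vanish (two repeated all-ones columns), bound the surviving $|S|=n-1$ and $|S|=n$ determinants by Hadamard's inequality, and absorb the top-order term for large $t$. Your write-up just makes explicit (columns instead of rows, and with careful constant bookkeeping and edge cases) what the paper's proof asserts more tersely.
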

\begin{proof}
	First, due to the rank one part in $J$ (i.e., to the matrix
	consisting of all $1$'s), the terms of orders $1,t^{-\updelta},\ldots,t^{-(n-2)\updelta}$
	in $t$ cancel out. 
	This leaves only two powers of $t$, $t^{- \updelta(n-1)}$ and $t^{- \updelta n}$, so
	\begin{equation*}
		\mathop{\mathrm{det}}_{i,j=1}^{n}\left[ J(\upeta_i,\upeta_j) \right]
		=
		t^{- \updelta(n-1)}
		\mathsf{D}_n
		+
		t^{- \updelta n}
		\mathop{\mathrm{det}}_{i,j=1}^{n} \bigl[ \tilde J(\upeta_i,\upeta_j) \bigr].
	\end{equation*}
	Here $\mathsf{D}_n$ is a sum of $n$ determinants of 
	$\tilde J(\upeta_i,\upeta_j)$ with one of the rows replaced by the row of ones.
	Estimating the absolute value of each of these determinants 
	by Hadamard's inequality 
	and noting that for large $t$ the first summand above dominates,
	we get the desired bound.
\end{proof}

%----------------------------------------------------------------------------------------------------------------------------------------
%----------------------------------------------------------------------------------------------------------------------------------------%----------------------------------------------------------------------------------------------------------------------------------------%----------------------------------------------------------------------------------------------------------------------------------------

Let us conclude with two comments on Theorem~\ref{thm:ASEP_theorem_intro} and its proof given above:

\begin{remark}
	\label{rmk:ASEP_more_discussion}
	1. A similar approach 
	can be utilized to obtain a one-sided
	large deviation bound of the form
	\begin{equation*}
		\mathbb{P}_{\mathrm{step},q}
		\bigl( 
			\mathsf{x}_{\lfloor \mathsf{s} t (1-\varepsilon) \rfloor}(t/\upgamma)
			< (-1+2\sqrt{\mathsf{s}})t 
		\bigr)
		\le 
		\mathsf{C} e^{-t \Phi_+^{\mathsf{s}}(\varepsilon)},
	\end{equation*}
	where $\mathsf{s}\in(0,1)$ is fixed. 
	This is because \cite[Lemma 4]{TW_asymptotics}
	provides a pre-limit Fredholm determinantal formula for the probability 
	$\mathbb{P}_{\mathrm{step},q}\left( \mathsf{x}_m(t/\upgamma)<x \right)$
	for any $m,x,t$ (with a suitably modified kernel $\mathsf{J}^{(\upmu)}_{m,t}$).
	The function $\Phi_+^{\mathsf{s}}(\varepsilon)$ may be explicitly computed using this formula.
	However, for the analysis of the coarsening model we only need the 
	particular case $\mathsf{s}=\frac{1}{4}$ which is our
	Theorem~\ref{thm:ASEP_theorem_intro}.

\smallskip

	2. Our proof of Theorem~\ref{thm:ASEP_theorem_intro} shows that, up to polynomial corrections,
	the probability 
	\begin{equation*}
		\mathbb{P}_{\mathrm{step},q}\left( \mathsf{x}_{\lfloor t(1-\varepsilon)/4 \rfloor }(t/\upgamma)<0 \right)
	\end{equation*}
	that the ASEP is ``too slow'' goes to zero at exponential rate. 
	The large deviation probability 
	\begin{equation*}
		\mathbb{P}_{\mathrm{step},q}\left( \mathsf{x}_{\lfloor t(1+\varepsilon)/4 \rfloor }(t/\upgamma)>0 \right)
	\end{equation*}
	at the other tail (that the ASEP is ``too fast'') 
	should go to zero at a much faster rate $\exp\{-t^2 \Phi_-(\varepsilon)\}$ 
	for some other rate function $\Phi_-(\varepsilon)>0$
	(e.g., see \cite{Joh} for the case of TASEP).
	The analysis required to 
	establish this other tail bound for ASEP by the same method as Theorem~\ref{thm:ASEP_theorem_intro} 
	would likely be much more involved. 
	Indeed, in this case the Fredholm determinant would need to 
	go to zero instead of one, and because the 
	critical points have a 
	completely different 
	structure (there are two complex conjugate critical points
	of $S$ with $\Re S$ being the same at both of them),
	the result would require a much more subtle analysis of all 
	terms of the Fredholm expansion 
	\eqref{Fredholm_determinant}.
	We do not pursue this superexponential tail here.
\end{remark}

\section{Application of Fontes-Schonmann-Sidoravicius}\label{sec: FSS_sec}
In this section we prove a variant of the fixation theorem of Fontes-Schonmann-Sidoravicius (FSS). It states that if the biased coarsening model runs starting from an initial condition in which blocks of size $L_0$ are monochromatic and independent of one another, then if the probability that a block begins in the all $+1$ state is high enough, then all spins converge to $+1$ quickly.

%First, given $p>0$ and $\epsilon_0>0$, we choose $t_0$ and $L_0$ so that under the coarsening dynamics with initial bias $p$ (for $+1$) and $q$ for tie-breaking coin flips,
%\[
%\mathbb{P}_{p,q}\left(\sigma_x^t = +1 \text{ for all } x \in \Lambda_{L_0} \mid \sigma_x^0 = -1 \text{ for all } x \in \Lambda_{L_0}^c\right) \geq 1-\epsilon_0\ ,
%\]
We will begin with $L_0\ge 1$ and split the lattice into disjoint translates of the box $\Lambda_{L_0}$. Each box will be filled with $+1$ spins with probability $1-\epsilon_0$, and filled with $-1$ spins otherwise, independently of each other. Then we will run the zero-temperature Glauber dynamics on the original lattice with tie-breaking probability $q \in [0,1]$. The corresponding probability measure on the dynamics and initial distribution is denoted $\mathbb{P}_{\epsilon_0,q,L_0}$.

We will make the following assumption, and its validity will depend on the value of $q$ we use.
\begin{assumption}\label{assumption}
Let $T$ be the time for the configuration in the $L^d$ rectangle $\Lambda_L$ to reach all $+1$, when the dynamics (with $q$-biased tie breaking) is run with an initial configuration of all $-1$ inside the rectangle and all $+1$ outside.  There exist $C,\gamma \in (0,\infty)$ and $\alpha \geq 1$ such that
\begin{equation}\label{eq: assumption_erosion}
\mathbb{P}_q(T>CL^\alpha) \leq e^{-\gamma L} \text{ for all }L.
\end{equation}
\end{assumption}

Note that the measure $\mathbb{P}_q$ only depends on $q$, since the initial condition is deterministic. Furthermore, by attractiveness, if \eqref{eq: assumption_erosion} holds for some $q,C,\gamma,$ and $\alpha$ then it holds for $q' \geq q$ with the same $C,\gamma,\alpha$. Last, we note here that by Theorem~\ref{exponential box decay}, the above assumption holds in our coarsening model for $q>1/2$ in either of the following cases: $d \leq 4$ and $\alpha = d-1$, or $d\geq 5$ and any $\alpha >3$.

The main result is:
\begin{theorem}\label{thm: block_FSS}
Suppose $q \in [0,1]$ is such that \eqref{eq: assumption_erosion} holds for some $\alpha \geq 1$ and constants $C,\gamma$. 
\begin{enumerate}
\item If $\alpha=1$, and $L_0 \geq 4$ is given, there exists $\epsilon> 0$ with the following property. For any $\epsilon_0 \in (0,\epsilon)$, there exists $C_1>0$ such that
\[
\mathbb{P}_{\epsilon_0,q,L_0}(\sigma_0^s = -1 \text{ for some } s \geq t) \leq \exp\left( -C_1 \frac{t}{\log^2 t} \right) \text{ for all large } t > 0.
\]
\item If $\alpha>1$, and $\delta>0$ is given, there exists $\epsilon>0$ with the following property. For any $\epsilon_0 \in (0,\epsilon)$ and  
\[
L_0 \leq K \epsilon_0^{-\frac{2\delta}{(d-1)(\alpha-1)}},
\]
(where the constant $K$ is an explicit function of $d$ and $\alpha$), there exists $C_2>0$ such that
\[
\mathbb{P}_{\epsilon_0,q,L_0}(\sigma_0^s = -1 \text{ for some }s \geq t) \leq \exp\left( -C_2 \frac{t^{1/\alpha}}{\log^{\alpha + 3\delta} t}\right) \text{ for all large } t > 0.
\]
\end{enumerate}
\end{theorem}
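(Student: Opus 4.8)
The plan is to run a multiscale (renormalization) argument in the spirit of Fontes--Schonmann--Sidoravicius, adapted to the fact that the initial condition is constant on blocks of size $L_0$ rather than i.i.d.\ at each site. Fix a geometrically growing sequence of scales $L_k = L_0\,\lambda^k$ (with $\lambda$ a large integer depending on $d,\alpha$), and tile $\Z^d$ at each scale by translates of $\Lambda_{L_k}$. Call a scale-$k$ box \emph{bad} if the density of $-1$ blocks inside it exceeds some threshold $\epsilon_k$, and \emph{good} otherwise; since the $L_0$-blocks are independent with failure probability $\epsilon_0$, a standard large-deviation (Chernoff) estimate gives $\prob{\text{a scale-$k$ box is bad}} \le \exp(-c L_k^{d}/L_0^{d})$ provided $\epsilon_0$ is small relative to $\epsilon_k$. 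The thresholds are chosen recursively, $\epsilon_{k+1} = \epsilon_k^{1+\kappa}$ for a small $\kappa>0$, so that $\epsilon_k \to 0$ doubly exponentially and the bad-box probabilities are summable across scales in a way that makes the origin a.s.\ surrounded, at all large scales, by good boxes.

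The second ingredient is the deterministic \emph{erosion step}: if a scale-$k$ box is good, then the $-1$ blocks inside it are sparse, hence separated, and each isolated $-1$ region of linear size at most $L_k$ is swallowed by the surrounding $+1$ sea. Here I invoke Assumption~\ref{assumption} (equivalently Theorem~\ref{exponential box decay}): by attractiveness, placing $+1$ everywhere outside a droplet of size $O(L_k)$ only helps, so the droplet disappears by time $C L_k^\alpha$ except with probability $e^{-\gamma L_k}$. Summing the failure probabilities over the $O((L_{k+1}/L_k)^d)$ droplets inside a scale-$(k{+}1)$ box, and over $k$, remains summable because $L_k$ grows geometrically. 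Thus on a full-probability event, there is a (random) scale $k_0$ such that for all $k\ge k_0$ the scale-$k$ box around the origin is good and all its $-1$ content is erased by time $T_k := C'\sum_{j\le k} L_j^\alpha \asymp L_k^\alpha$ (the sum is dominated by its last term when $\alpha\ge1$). Once a box is entirely $+1$ with a $+1$ border it stays $+1$ forever, so $\sigma_0^s = +1$ for all $s \ge T_{k_0}$; converting "$T_{k_0} \le t$'' into a tail bound on the fixation time gives $\prob{\sigma_0^s=-1 \text{ some } s\ge t} \le \prob{k_0 > k(t)}$ where $L_{k(t)}^\alpha \asymp t$, i.e.\ $L_{k(t)} \asymp t^{1/\alpha}$. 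The dominant term in that probability is the bad-box / erosion-failure probability at scale $k(t)$, which is of order $\exp(-c L_{k(t)}^{\min(d,1)\cdot(\dots)})$ — and after optimizing the scale growth $\lambda$ against the doubly-exponential decay of $\epsilon_k$, this is what produces the stated rate $\exp(-C_1 t/\log^2 t)$ when $\alpha=1$ and $\exp(-C_2 t^{1/\alpha}/\log^{\alpha+3\delta} t)$ when $\alpha>1$. The logarithmic losses come precisely from the fact that beating the erosion times requires the number of renormalization scales up to time $t$ to be of order $\log t$, and each scale contributes a multiplicative slack; the constraint $L_0 \le K\epsilon_0^{-2\delta/((d-1)(\alpha-1))}$ is exactly the condition ensuring the initial scale is small enough that the recursion $\epsilon_{k+1}=\epsilon_k^{1+\kappa}$ can be seeded and still reach the needed smallness within $\sim\log t$ steps.

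The main obstacle — and the place where care is genuinely needed — is \textbf{controlling the interaction between neighboring $-1$ droplets during erosion}. Assumption~\ref{assumption} only governs a single isolated droplet in a $+1$ sea; in a good box the droplets are sparse but not literally isolated, and a priori two nearby $-1$ regions could merge into a larger one whose erosion time is not controlled, or a droplet could be fed from outside the box. The resolution is the usual FSS device: (i) use attractiveness to compare with a modified dynamics in which each droplet is artificially surrounded by $+1$'s and erased independently, paying for this only through the good-box event which guarantees enough spacing (at least, say, $\eta L_k$) between droplets, so that no droplet can grow to reach another before it is erased — this needs the *a priori* bound that an isolated droplet never grows beyond a constant multiple of its initial diameter, which again follows from Theorem~\ref{exponential box decay} / attractiveness; and (ii) a subtle point about the $-1$ material *created* near the boundary of the scale-$k$ box by influence from outside — handled by working with a slightly enlarged box and noting that the scale-$(k{+}1)$ goodness controls a buffer region around it. Once this "droplets don't conspire'' lemma is in place, the rest is bookkeeping: choosing $\lambda,\kappa$ and the thresholds $\epsilon_k$, checking the two Borel--Cantelli-type sums converge, and carefully tracking the polylogarithmic factors to land on the exponents $\log^2 t$ and $\log^{\alpha+3\delta} t$ claimed in parts (1) and (2).
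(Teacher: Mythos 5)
Your proposal has the right overall architecture (multiscale renormalization with monochromatic blocks, erosion at each scale via \eqref{eq: assumption_erosion}, and enlarged boxes to buffer outside influence), but two load-bearing steps do not work as written. The first is the clustering control. You declare a scale-$k$ box ``good'' when the \emph{density} of $-1$ sub-blocks is small and then assert that the $-1$ droplets inside are ``sparse, hence separated.'' Low density does not imply separation: adjacent $-1$ sub-blocks occur with probability of order $\widetilde\epsilon_k^{\,2}$ per site, and long quasi-one-dimensional chains of $-1$ blocks occur with probability $\approx\widetilde\epsilon_k^{\,\ell}$, which a density bound does not see. The argument that actually works replaces the density condition by a combinatorial one: apply the threshold-two $+1\to-1$ bootstrap rule to the field of $-1$ blocks to produce a minimal family of \emph{well-separated rectangles} containing them; the deterministic fact that $-1$ spins confined to such rectangles in a $+1$ sea never leave them is what resolves the ``droplets conspire'' problem (your proposed substitute --- that an isolated droplet grows by at most a constant factor --- is neither available from Theorem~\ref{exponential box decay} nor sufficient). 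One then bounds the probability that some rectangle has a side longer than a threshold $n_k$ via the Aizenman--Lebowitz lemma and the internal-spanning estimate $(2n_1\cdots n_{d-1}\widetilde\epsilon_k)^{\lfloor n_d/3\rfloor}$; this term, exponential only in the \emph{longest side} of a rectangle, is the dominant contribution to the recursion, and it is absent from your proposal.

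The second gap is quantitative and it kills the claimed rates. With geometric scales $L_k=L_0\lambda^k$ ($\lambda$ fixed), the side-length threshold $n_k$ is bounded by a constant multiple of $\lambda$, so the clustering estimate improves the block failure probability by only a fixed power per scale, $\epsilon_{k+1}\approx\epsilon_k^{c\lambda}$ (your $\epsilon_{k+1}=\epsilon_k^{1+\kappa}$ is of this type). Then $\log(1/\epsilon_k)\asymp(c\lambda)^k$ while $T_k\asymp\lambda^{\alpha k}$, and since $c<1$ (the internal-spanning exponent is $\lfloor n_k/3\rfloor$, a fraction of the number of sub-blocks along \emph{one} side), one only gets $\log(1/\epsilon_k)\asymp T_k^{(1-\delta')/\alpha}$ --- a stretched exponential, never $\exp(-C_1t/\log^2t)$. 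To reach the stated rates the scale ratios must grow with $k$, roughly $l_{k+1}\asymp\epsilon_k^{-1/(d-1)}$, so that the clustering term yields the much faster recursion $\epsilon_{k+1}=\exp(-\chi\,\epsilon_k^{-1/(d-1)})$; the relation $T_{k+1}\asymp L_0\log(1/\epsilon_{k+1})\cdot(\log\log(1/\epsilon_{k+1}))^2$ then produces exactly the $\log^2t$ (resp.\ $\log^{\alpha+3\delta}t$) corrections. Consequently the number of scales up to time $t$ is far smaller than your claimed $\log t$, and the constraint $L_0\le K\epsilon_0^{-2\delta/((d-1)(\alpha-1))}$ arises from compatibility between the erosion time $C(n_kL_k)^\alpha$ and the next scale length $L_{k+1}$ (needed for the finite-speed-of-propagation step), not from ``seeding'' the recursion. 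Relatedly, the outer-influence step requires an actual speed estimate (chronological paths plus a Poisson Chernoff bound), not goodness of the next scale.
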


\begin{remark}
One can argue by attractiveness that item 1 holds for all $L_0 \geq 1$. Part 1 of the above theorem can be improved, replacing $\log^2$ by $\log \cdot (\log \log)^2$, or more iterated logarithms. Part 2 can also be improved, replacing $\log^{\alpha + 3\delta}$ by $\log^{\alpha+2\delta} \cdot (\log \log)^{2\alpha+5\delta}$, or more iterated logarithms.
\end{remark}
\begin{remark}
Our proof is a modification of that of Fontes-Schonmann-Sidoravicius (FSS)~\cite{FSS}, allowing for a more general erosion rate.  Their original arguments used $\alpha=d$, which comes from a comparison to the Symmetric Exclusion Process (SEP).  Morris~\cite{Morris} adapted the arguments of FSS for large dimensions using the SEP estimates, and needed to choose the initial scale $(2L_0)^{d^2}> C/\eps_0$ (see Theorem 2 in~\cite{Morris}). This is roughly our dependence between $L_0$ and $\epsilon_0$ in item 2 above when $\alpha=d$ and $\delta$ is fixed. Our parameters in the $\alpha=1$ case show that one can choose $L_0$ independent of $\epsilon_0$. 

The reason why in the case $\alpha>1$, the proof requires $L_0$ to depend on $\epsilon_0$ is seen by attempting to take $\epsilon_0$ fixed but $L_0 \to \infty$ (and obtaining a contradiction) in the following bounds. From \eqref{eq: t_k_condition}, one has $t_{k+1} \geq C(n_kL_k)^\alpha \geq CL_k^\alpha$ (as $n_k \geq 1$). From \eqref{eq: time_upper_bound}, we also find $t_{k+1} \leq C'L_{k+1} = C'L_k l_{k+1}$, and so 
\[
C'' L_k^{\alpha-1} \leq l_{k+1}.
\]
For $k=0$, this becomes $l_1 \geq C''L_0^{\alpha-1}$. On the other hand, the term in \eqref{eq: end_of_bootstrapping} is a probability bound, so it is only useful if it is bounded by 1: for $k=0$,
\[
1 \geq \left( \frac{5 n_0 l_1}{3} \right)^d (2n_0^{d-1}\widetilde \epsilon_0)^{\lfloor n_0/3 \rfloor} \geq (C''' L_0)^{d(\alpha-1)} (2n_0^{d-1}\epsilon_0)^{\lfloor n_0/3 \rfloor}.
\]
The second term of this product on the right side is minimized when $n_0$ is the integer part of a constant factor times $\epsilon_0^{-\frac{1}{d-1}}$. For this choice, if we let $\epsilon_0$ be fixed and let $L_0 \to \infty$, we get a contradiction, as $\alpha>1$.
\end{remark}

From here we will copy the arguments of FSS, with modification as appropriate. We will inductively define three sequences $(\epsilon_n)_{n \geq 0}$,
\[
l_1, l_2, \ldots, \quad \text{and} \quad t_1, t_2, \ldots,
\]
and we will set for $k \geq 0$
\[
L_k = L_0 \cdot l_1 \cdot \cdots \cdot l_k, \text{ and } T_k = t_0 + t_1 + \cdots + t_k, \quad \text{with } t_0=0\ .
\]
Next define cubes of scale $k$, $k=0, 1, \ldots$ (pictured in Figure~\ref{fig:bootstrap_boxes}) as
\[
B_k^i = \{0, \ldots, L_k-1\}^d + L_k i \text{ for } i \in \mathbb{Z}^d
\]
and larger cubes
\[
\widetilde B_k^i = \left( \bigcup_{j \in \overline{B}_k} B_{k-1}^j \right) + L_k i, \text{ where } \overline{B}_k = \left\{ - \left\lfloor \frac{1}{3} l_k \right\rfloor , \ldots, l_k + \left\lfloor \frac{1}{3} l_k \right\rfloor \right\}^d
\]
with $B_k := B_k^0$ and $\widetilde B_k := \widetilde B_k^0$.

Just as in FSS, we will run a block dynamics, coupled to the original dynamics, so that the evolution of the spins in each box $B_k^i,~ i \in \mathbb{Z}^d$ during the time interval $[T_{k-1},T_k]$ will depend only on the configuration at time $T_{k-1}$ and the Poisson clocks inside the box $\widetilde B_k^i$. For this we also need the ``influence time'' associated with the box $\widetilde B_k^i$. Let $\left( \sigma_{\widetilde B_k^i,\zeta;s}^{\xi,T_{k-1}}\right)_{s \geq T_{k-1}}$ be the evolution in the box $\widetilde B_k^i$ with boundary condition $\zeta$ outside this box, started at time $T_{k-1}$ from the configuration $\xi$ inside the box. We use $+$ in place of $\zeta$ or $\xi$ to denote the all $+1$ initial configurations, and similarly for $-$. Set
\[
\tau_k^i = \inf \left\{ s \geq T_{k-1}: \sigma_{\widetilde B_k^i,+;s}^{\xi,T_{k-1}}(x) \neq \sigma_{\widetilde B_k^i,-;s}^{\xi,T_{k-1}}(x) \text{ for some } x \in B_k^i \text{ and } \xi \in \{-1,1\}^{\widetilde B_k^i} \right\}\ .
\]

We begin the block process at time $t=t_0$. With probability $1-\epsilon_0$, all spins in the box $B_0^i$ are declared $+1$, and with probability $\epsilon_0$ they are declared $-1$. The status of spins in different boxes is determined independently.
%\begin{enumerate}
%\item[{\bf Rule 0}.] During the time interval $[0,T_0)$ we observe the evolution inside the box $B_0^i$ with $-1$ boundary conditions. We assign to the spins in the box $B_0^i$ up to time $T_0$ the values we see in that evolution. If there is any spin in state $-1$ in $B_0^i$ at times that are arbitrarily close to $T_0$, then at time $T_0$, all the spins in $B_0^i$ are declared to be $-1$. Otherwise, at time $T_0$ all the spins in $B_0^i$ are declared to be $+1$.
%\end{enumerate}

For $k \geq 1$, use the rules of FSS \cite[p.~507]{FSS}:
\begin{enumerate}
\item[]
\begin{enumerate}
\item[{\bf Rule 1}.] During the time interval $[T_{k-1},T_k)$ we observe the evolution inside the box $\widetilde B_k^i$ with $+1$ boundary conditions. We assign to the spins in the box $B_k^i$ up to time $\min\{\tau_k^i,T_k\}$ the values that we see in that evolution.
\item[{\bf Rule 2}.] If $\tau_k^i < T_k$, then at time $\tau_k^i$, all spins in $B_k^i$ are declared to be $-1$, and persist in this state without change to time $T_k$.
\item[{\bf Rule 3}.] If, following the two rules above, there is any spin in state $-1$ in $B_k^i$ at times that are arbitrarily close to $T_k$, then at time $T_k$, all the spins in $B_k^i$ are declared to be $-1$. Otherwise, at time $T_k$ all the spins in $B_k^i$ are declared to be $+1$.
\end{enumerate}
\end{enumerate}

We next note that the following properties \cite[p.~506]{FSS} hold.
\begin{enumerate}
\item[(A)] The block dynamics favors $-1$ spins, in the sense that at any site and time where the original dynamics has a $-1$ spin, the block dynamics also has a $-1$ spin.
\item[(B)] In the block dynamics at time $T_k$, all squares of the $k$-th scale will be monochromatic.
\item[(C)] For each $k \geq 1$, the random field $\eta_k$ that associates to each $i \in \mathbb{Z}^d$ a random variable $\eta_k(i)$ which takes the value $+1$ (respectively $-1$) if at time $T_k$ the block $B_k^i$ is in state $+1$ (respectively $-1$) is a $1$-dependent random field.
\end{enumerate}

For the rest of Section~\ref{sec: FSS_sec}, all probabilities are evaluated on the space on which we have coupled the original dynamics with the block dynamics.

\subsection{Main bounds}

Let $\widetilde \epsilon_k$, $k \geq 0$ denote the probability that at time $T_k$, the block $B_k$ is in state $-1$ and note that $\widetilde \epsilon_0 \leq \epsilon_0$, since $\widetilde \epsilon_0 = \epsilon_0$. We will now try to bound the probability of the event
\[
F_{k+1} = \{-1 \text{ spins are present in }B_{k+1} \text{ at times arbitrarily close to }T_{k+1}\}, ~ k \geq 0
\]
(in steps 1 and 2 below) and of the event
\[
\{\tau_{k+1} < T_{k+1}\},~ k \geq 0
\]
(in step 3 below). Combining these bounds will give us an inequality for $\widetilde \epsilon_{k+1}$ in terms of our various parameters (see the top of Section~\ref{sec: choose}). After that, we will choose appropriate parameters to ensure that we can prove in Section~\ref{sec: induction} that $\widetilde \epsilon_{k+1}$ decreases to zero rapidly.

\bigskip
\noindent
{\bf Step 1. Control of bootstrapping at time }$T_k$. At time $T_k$, all blocks $B_k^i$ are monochromatic and we identify each block in the natural way with an element of $\overline B_{k+1}$. For $i \in \overline B_{k+1}$, let 
\[
\eta_k(i) = \begin{cases}
+1 &\quad \text{if }B_k^i \text{ is in state } +1 \text{ at time } T_k \\
-1 &\quad \text{otherwise}
\end{cases}.
\]
We apply the threshold-two $+1\to -1$ bootstrap percolation rule to the random field $\eta_k$ in $\overline B_{k+1}$ to obtain a collection $\overline R_1, \ldots, \overline R_N$ of well-separated rectangles (no vertex of $\mathbb{Z}^d$ is at distance $\leq 1$ from two rectangles), which is minimal among those that contain the renormalized sites $i \in \overline B_{k+1}$ with $\eta_k(i)=-1$. Precisely, we define a sequence $(\eta_k^{(j)})_{j \geq 0}$ of fields by setting $\eta_k^{(0)} = \eta_k$, and for each $j$, we put $\eta_k^{(j+1)}(i) = -1$ if $i$ has at least two neighbors (in $\ell^1$-distance) in $\overline B_{k+1}$ with $\eta_k^{(j)}$-value equal to $-1$. We set $\eta_k^{(j+1)}(i) = \eta_k^{(j)}(i)$ otherwise. This sequence $(\eta_k^{(j)})$ is monotone in $j$, so we can set $\eta_k^{(\infty)} = \lim_{j \to \infty} \eta_k^{(j)}$. The collection of vertices $i \in \overline B_{k+1}$ with $\eta_k^{(\infty)}(i) = -1$ forms a collection of minimal well-separated rectangles. Let $R_n = \cup_{j \in \overline R_n}B_k^j$ for $n=1, \ldots, N$ and note that the $R_n$'s are also well-separated. By definition, one has $\mathbb{P}_{\epsilon_0,q,L_0}(\eta_k(i)=-1)=\widetilde \epsilon_k$.

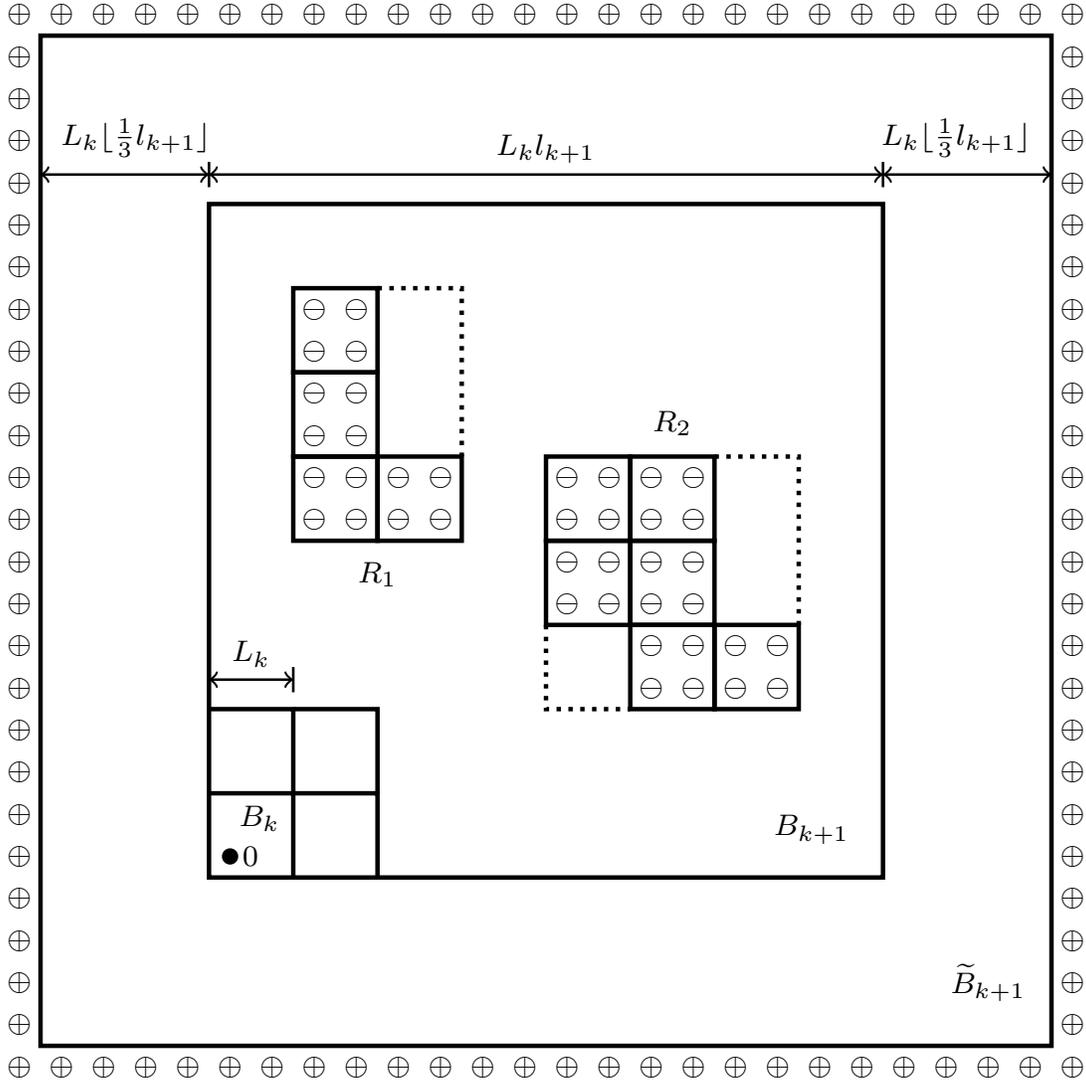
\begin{figure}[htpb]
	\centering
	\scalebox{1.4}{\begin{tikzpicture}[scale=.8, very thick]
		\draw (0,0)--++(8,0)--++(0,8)--++(-8,0)--cycle;
		\draw [fill] (0.25,0.25) circle (2pt);
		\node [right] at  (0.25,0.25) {$0$};
		\node [above left] at  (7.75,0.25) {$B_{k+1}$};
		\draw (0,1)--++(1,0)--++(0,-1);
		\draw (1,1)--++(1,0)--++(0,-1);
		\draw (1,2)--++(1,0)--++(0,-1);
		\draw (0,2)--++(1,0)--++(0,-1);
		\node at (.6,.7) {$B_k$};
		\draw (-2,-2)--++(12,0)--++(0,12)--++(-12,0)--cycle;
		\foreach \ii in {-5,...,20}
			{
				\node at (\ii/2+0.25,-2.25) {$\oplus$};
				\node at (\ii/2+0.25,10.25) {$\oplus$};
			}
		\foreach \jj in {-4,...,19}
			{
				\node at (-2.25,\jj/2+0.25) {$\oplus$};
				\node at (10.25,\jj/2+0.25) {$\oplus$};
			}
		\node at (9.25,-1.25) {$\widetilde B_{k+1}$};
		\draw [line width=.7] (8,8.2)--++(0,.3);
		\draw [line width=.7] (0,8.2)--++(0,.3);
		\draw[<->, line width=.7] (0,8.35)--++(8,0) node [midway, yshift=7]
		{$L_k l_{k+1}$};
		\draw[<->, line width=.7] (8,8.35)--++(2,0) node [midway, yshift=10,
			xshift=-3]
		{$L_k\lfloor \frac{1}{3}l_{k+1} \rfloor$};
		\draw[<->, line width=.7] (-2,8.35)--++(2,0) node [midway,
			yshift=10, xshift=3]
		{$L_k\lfloor \frac{1}{3}l_{k+1} \rfloor$};
		\draw [line width=.7] (1,2.2)--++(0,.3);
		\draw[<->, line width=.7] (0,2.35)--++(1,0) node [midway, yshift=7]
		{$L_k$};
		\draw (1,4)--++(1,0)--++(0,1)--++(-1,0)--cycle;
		\draw (1,5)--++(1,0)--++(0,1)--++(-1,0)--cycle;
		\draw (1,6)--++(1,0)--++(0,1)--++(-1,0)--cycle;
		\draw (2,4)--++(1,0)--++(0,1)--++(-1,0)--cycle;
		\draw[dotted] (2,7)--++(1,0)--++(0,-2);
		\node at (2,3.6) {$R_1$};
		\foreach \ii in {8,...,13}
			{
				\node at (1.25,\ii/2+.25) {$\ominus$};
				\node at (1.75,\ii/2+.25) {$\ominus$};
			}
		\foreach \ii in {8,...,9}
			{
				\node at (2.25,\ii/2+.25) {$\ominus$};
				\node at (2.75,\ii/2+.25) {$\ominus$};
			}
		\foreach \kk in {(4,3),(4,4),(5,2),(5,3),(5,4),(6,2)}
			{
				\draw \kk--++(1,0)--++(0,1)--++(-1,0)--cycle;
			}
		\draw[dotted] (4,3)--++(0,-1)--++(1,0);
		\draw[dotted] (6,5)--++(1,0)--++(0,-2);
		\node at (5.5,5.4) {$R_2$};
		\foreach \ii in {6,...,9}
			{
				\node at (4.25,\ii/2+.25) {$\ominus$};
				\node at (4.75,\ii/2+.25) {$\ominus$};
			}
		\foreach \ii in {4,...,9}
			{
				\node at (5.25,\ii/2+.25) {$\ominus$};
				\node at (5.75,\ii/2+.25) {$\ominus$};
			}
		\foreach \ii in {4,...,5}
			{
				\node at (6.25,\ii/2+.25) {$\ominus$};
				\node at (6.75,\ii/2+.25) {$\ominus$};
			}
	\end{tikzpicture}}
	\caption{Illustration of the hierarchy of cubes used in the proof of Theorem~\ref{thm: block_FSS}. At time $T_k$, the spins in cubes of side length $L_k$ (which are translates of the cube $B_k$ of the form $B_k^j$) are monochromatic. Their boundaries in the dual lattice appear in the figure. Inside the cube $B_{k+1}$, the $R_i$'s are the well-separated rectangles from step 1 which contain the -1 blocks $B_k^j$ and are produced using the $+1\to-1$ bootstrap percolation rule. Throughout the interval of time $[T_k,T_{k+1})$, the evolution is run using $+1$ boundary conditions on $\widetilde B_{k+1}$, the largest box pictured.}
	\label{fig:bootstrap_boxes}
\end{figure}

Estimation of the sizes of the rectangles $\overline R_1, \ldots, \overline R_N$ is similar to that in \cite[p. 508-510]{FSS}. To summarize, one applies the Aizenman-Lebowitz lemma \cite[Lemma~1]{AL} (restated as \cite[Lemma~2.1]{FSS}), to deduce that if one of the rectangles $\overline R_n$ has one side of length bigger than $j$, then $\overline R_n$ must contain a subrectangle $\hat R$ with larger side in $\{\lfloor j/2 \rfloor -1, \ldots, j\}$ which is internally spanned. To bound the probability that there is such an internally spanned subrectangle, we let $n_k$ be any number satisfying
\begin{equation}\label{eq: n_k_condition}
n_k \in \{1, \ldots, \lfloor 5l_{k+1}/3 \rfloor\}.
\end{equation}
%\[
%n = \left\lfloor \frac{b}{\widetilde \epsilon_k} \right\rfloor\ .
%\]
(our $n_k$ corresponds to the quantity $\lfloor b/(q_k)^{\frac{1}{d-1}}\rfloor$ in the definition of $E_{k+1}$ below (4.11) in \cite{FSS}) and note that the number of rectangles inside $\overline B_{k+1}$ with the length of the larger side being in $\{\lfloor n_k/2 \rfloor -1, \ldots, n_k\}$ is at most $\left( \frac{5}{3} l_{k+1}\right)^d n_k^d$. Next one can also argue that if $R \subset \overline B_{k+1}$ is an $n_1 \times \cdots \times n_d$ rectangle, then (see \cite[Eq.~(4.15)]{FSS})
\[
\mathbb{P}_{\epsilon_0,q,L_0}(R \text{ is internally spanned}) \leq (2n_1\cdots n_{d-1}\widetilde \epsilon_k)^{\lfloor n_d/3 \rfloor}.
\]
So if we define
\[
E_{k+1} = \{\overline R_1, \ldots, \overline R_N \text{ have all sides of length at most } n_k\},
\]
then assuming \eqref{eq: n_k_condition}, we have
\begin{equation}\label{eq: end_of_bootstrapping}
\mathbb{P}_{\epsilon_0,q,L_0}(E_{k+1}^c) \leq \left( \frac{5n_k l_{k+1}}{3} \right)^d (2n_k^{d-1} \widetilde \epsilon_k)^{\lfloor n_k /3 \rfloor}.
\end{equation}

\bigskip
\noindent
{\bf Step 2. Erosion of $(-1)$-rectangles}. This step follows the corresponding step in \cite[p.~510]{FSS}, with the exception that we allow for general $\alpha>1$ in \eqref{eq: t_k_condition} below. Since we have just given a bound on the probability of $E_{k+1}^c$, and we wish to estimate the probability of $F_{k+1}$, we must next upper bound the conditional probability $\mathbb{P}_{\epsilon_0,q,L_0}(F_{k+1} \mid E_{k+1})$. To do so, we need to consider the system started at time $T_k$ from a configuration in $\widetilde B_{k+1}$ for which $E_{k+1}$ can occur and let the system evolve with $+$ boundary conditions until time $T_{k+1}$. By attractiveness, an upper bound on the probability that in such a setup, there are $-1$ spins present at time $T_{k+1}$ can be obtained by starting the evolution inside $\widetilde B_{k+1}$ at time $T_k$ with $-1$ spins at all sites of the rectangles $R_1, \ldots, R_N$ described in step 1 and $+1$ spins elsewhere in $\widetilde B_{k+1}$. The $-1$ spins cannot spread outside of the rectangles $R_1, \ldots, R_N$, and when a rectangle $R_n$ is taken over by $+1$ spins, it will never again contain a $-1$ spin.

If $E_{k+1}$ occurs, then each $R_n,~ n=1, \ldots, N$, is contained in a cube of side length bounded by $n_k  L_k$. By attractiveness, the time needed to erode such a cube $R$ is therefore stochastically bounded by the time needed to erode a cube with side length $n_k L_k$. We will then make use of Assumption~\ref{eq: assumption_erosion} with $\alpha \geq 1$ to conclude that if $E_{k+1}$ occurs and 
\begin{equation}\label{eq: t_k_condition}
t_{k+1} \geq C (n_k L_k)^\alpha,
\end{equation}
then the probability that at time $T_{k+1} = T_k + t_{k+1}$ there is any $-1$ spin inside a fixed $R_n$ is bounded above by
\[
\exp \left( - \gamma n_k L_k \right).
\]
But the number $N$ of rectangles $R_i$ satisfies
\[
N \leq \left( \frac{5}{3} l_{k+1} \right)^d,
\]
%\mkd{this estimate is different than the one done in FSS -- they use $L_{k+1}$ instead of $l_{k+1}$} so
\begin{equation}\label{eq: step_2}
\mathbb{P}_{\epsilon_0,q,L_0}(F_{k+1} \mid E_{k+1}) \leq \left( \frac{5}{3} l_{k+1} \right)^d \exp \left( -\gamma n_k L_k \right).
\end{equation}
%Placing this together with the last estimate gives, assuming \eqref{eq: n_k_condition} and \eqref{eq: t_k_condition},
%\[
%\mathbb{P}(F_{k+1}) \leq \mathbb{P}(F_{k+1} \mid E_{k+1}) + \mathbb{P}(E_{k+1}^c) \leq \left( \frac{5n_k l_{k+1}}{3} \right)^d (2n_k^{d-1} \widetilde \epsilon_k)^{\lfloor n_k/3 \rfloor} +  \left( \frac{5}{3} l_{k+1} \right)^d \exp \left( -\gamma n_k L_k\right)\ .
%\]

\bigskip
\noindent
{\bf Step 3. Control of the outer influence.}  This step directly follows the corresponding step in \cite[p.~511]{FSS}.
%This part is also the same as that in FSS, and furthermore it only applies when $k \geq 1$, so we only outline the idea. 
Consider the two evolutions $\sigma_{\widetilde B_{k+1},+;t}^{\xi,T_k}$ and $\sigma_{\widetilde B_{k+1},-;t}^{\xi,T_k}$. We say that at time $t \geq T_k$ there is a discrepancy at $x \in \widetilde B_{k+1}$ if there exists some initial $\xi$ such that these evolutions disagree at time $t$ at site $x$. Otherwise $x$ is an agreement vertex. Note that all vertices in $\widetilde B_{k+1}$ are agreement vertices at time $T_k$ and all vertices outside $\widetilde B_{k+1}$ have discrepancies. The time $\tau_{k+1}$ is defined as the first time a vertex in $B_{k+1}$ has a discrepancy.

To estimate the probability that there is a discrepancy in $B_{k+1}$, one first shows that if some vertex $x$ in the internal boundary of $B_{k+1}^i$ at time $t>T_k$ is occupied by a discrepancy, then there exists a \df{chronological path} in the time interval $(T_k,t)$ which starts at some vertex of the external boundary of $\widetilde B_{k+1}^i$ and ends at a vertex of the internal boundary of $B_{k+1}^i$. A chronological path is a self-avoiding path whose vertices are first occupied by discrepancies in order along the path. See \cite[p. 511-512]{FSS} for a proof. Next, one can use the Chernoff bound for Poisson random variables to prove that the probability that any path with $r$ vertices is a chronological path during the time period $(T_k,T_{k+1})$ is at most $e^{-(\log(r/t_{k+1})-1)r}$. From these two facts, we can conclude that
\begin{align}
\mathbb{P}_{\epsilon_0,q,L_0}(\tau_{k+1}\leq T_{k+1}) &\leq \sum_{r \geq \lfloor L_{k+1}/4 \rfloor} 4d L_{k+1} (2d)^r e^{-(\log (r/t_{k+1})-1)r} \nonumber \\
&= 4d L_{k+1} \sum_{r \geq \lfloor L_{k+1}/4 \rfloor} (2d e^{-(\log (r/t_{k+1})-1)})^r\ . \label{eq: step_3}
\end{align}

\subsection{Choosing parameters}\label{sec: choose}
Summarizing, if $q \in [0,1]$, Assumption~\ref{eq: assumption_erosion} holds, and \eqref{eq: n_k_condition} and \eqref{eq: t_k_condition} hold, then we combine \eqref{eq: end_of_bootstrapping}, \eqref{eq: step_2}, and \eqref{eq: step_3} for
\begin{align}
\widetilde \epsilon_{k+1} &\leq \mathbb{P}_{\epsilon_0,q,L_0}(F_{k+1}) + \mathbb{P}_{\epsilon_0,q,L_0}(\tau_{k+1} \leq T_{k+1}) \nonumber \\
&\leq \left( \frac{5n_kl_{k+1}}{3} \right)^d (2n_k^{d-1} \widetilde \epsilon_k)^{\lfloor n_k/3 \rfloor} + \left( \frac{5}{3}l_{k+1}\right)^d \exp \left( - \gamma n_k L_k \right) \label{eq: master_1}\\
&+ 4dL_{k+1} \sum_{r \geq \lfloor L_{k+1}/4 \rfloor} (2de^{-(\log (r/t_{k+1})-1)})^r \label{eq: master_2}\ .
\end{align}

In this section, we will choose all the parameters that appear in the above inequalities, in an effort to minimize $\widetilde \epsilon_{k+1}$ subject to all of our constraints. In the original FSS argument, parameter choices were made along the way, in the three steps above. We postpone our choices so that we can separate the cases $\alpha>1$ and $\alpha=1$ (their paper only involved $\alpha=d$). This will allow us to get better rates depending on the different cases. Here we will split the analysis into two cases, $\alpha=1$ and $\alpha>1$, as we need to choose different parameters in these different cases.

\subsubsection{The case $\alpha=1$} 
Given $q\in [0,1]$, we will select $\epsilon > 0$ below depending only on the dimension $d$ and the constant $C$ from \eqref{eq: assumption_erosion}. Then choose any
\[
\epsilon_0 \in (0,\epsilon),~L_0 \geq 4, \text{ and } t_0 = 0\ .
\]
%such that
%\begin{equation}\label{eq: main_assumption}
%\mathbb{P}_{p,q}\left( \sigma_x^{t_0} = +1 \text{ for all } x \in \Lambda_{L_0} \mid \sigma_x^0 = -1 \text{ for all } x \in \Lambda_{L_0}^c \right) = 1-\epsilon_0\ .
%\end{equation}
%We will also need to make sure that $L_0$ can be chosen large enough for a given $\epsilon_0$. The requirement will become apparent later.
Next, for positive $\chi,D>0$ to be determined in the next subsection, define for $k \geq 1$,
\[
\epsilon_k = \exp\left( -\frac{\chi}{\epsilon_{k-1}^{\frac{1}{d-1}}}\right),~ l_k = \left\lfloor \frac{D}{\epsilon_{k-1}^{\frac{1}{d-1}}} \right\rfloor,~ t_k = C n_{k-1} L_{k-1}\ ,
\]
where $L_k$ was defined as $L_0 l_1 \cdots l_k$, $C$ is from Assumption~\ref{eq: assumption_erosion} and
\[
n_k = \left\lfloor \frac{1}{(2e \cdot \epsilon_k)^{\frac{1}{d-1}}} \right\rfloor \text{ for } k \geq 0\ .
\]
%\subsection{Preliminary bounds}
We will make the choices
\begin{equation}\label{eq: D_choice}
D = \frac{6}{5(2e)^{\frac{1}{d-1}}} + 32d C
\end{equation}
and
\begin{equation}\label{eq: chi_choice}
\chi = \min\left\{ \frac{1}{24 \cdot (2e)^{\frac{1}{d-1}}}, \frac{\gamma}{4\cdot(2e)^{\frac{1}{d-1}}}, \frac{D\log 2}{64} \right\}\ .
\end{equation}

Just as in FSS, we will define
\begin{equation}\label{eq: hat_epsilon_def}
\hat \epsilon = \hat \epsilon(q) = \sup \{x > 0 : \text{ if } \epsilon \in (0,x),~\text{then }\epsilon_k \leq \epsilon \text{ for } k \geq 0\}\ .
\end{equation}
If $\epsilon$ is small enough then $\epsilon_1 \leq \epsilon_0 < \epsilon$ and then, by induction, $\epsilon_k$ is decreasing in $k$. Therefore $\hat \epsilon>0$. We will need to take $\epsilon$ possibly even smaller  than $\hat \epsilon$, so that for each $\epsilon' \in (0,\epsilon] \subset (0,\hat \epsilon]$, the following list of conditions holds. The reader may think of these conditions as requiring that $\epsilon'$ (and therefore $\epsilon_k$) be ``sufficiently small'' at various points in the proof. Inequalities (E1)-(E5) are used below in Section~\ref{sec: induction} for the inductive argument, and (E6)-(E8) are used to establish our main result ``on a subsequence,'' in Section~\ref{sec: subsequence}.
\begin{enumerate}
\item[(E1). ] $\epsilon' \leq (3^{d-1}2e)^{-1}$.
\item[(E2). ] $\epsilon' \leq D^{d-1}$.
\item[(E3). ] 
\[
\exp \left( - \frac{\frac{1}{12 \cdot (2e)^{\frac{1}{d-1}}}-\chi}{(\epsilon')^{\frac{1}{d-1}}} \right) \cdot \left( \frac{5D}{3(2e)^{\frac{1}{d-1}}} \right)^d (\epsilon')^{-\frac{2d}{d-1}} \leq \frac{1}{4}\ .
\]
\item[(E4). ]
\[
(5D/3)^d (\epsilon')^{-\frac{d}{d-1}} \exp \left( -\frac{\frac{\gamma}{2(2e)^{\frac{1}{d-1}}} - \chi}{(\epsilon')^{\frac{1}{d-1}}}\right) \leq \frac{1}{4}.
\]
\item[(E5). ] Setting $\hat C = 8d \sup_{x \geq 1}  x 2^{-x/16}$,
\[
\hat C \exp\left( - \frac{\frac{D\log 2}{32}-\chi}{(\epsilon')^{\frac{1}{d-1}}}\right) \leq \frac{1}{2}\ .
\]
\item[(E6). ]
\[
\frac{\exp\left( - \chi/(\epsilon')^{\frac{1}{d-1}} \right)}{(\epsilon')^2} \leq 1\ .
\]
\item[(E7). ] putting $\iota = \chi/(d-1)$, one has $\epsilon' \leq \min\{ \iota^{-\iota/(d-1)}, \iota^{-\frac{d-1}{\log \iota}}\}$.
\item[(E8). ] $C(k+1) \cdot \frac{D^k}{(2e)^{\frac{1}{d-1}}} \leq \frac{1}{\epsilon_{k-1}^{\frac{1}{d-1}}}$ for $k \geq 1$. This follows from $\epsilon_k \leq \epsilon_0^k$ for all $k$ and taking $\epsilon_0$ small enough, which holds if $\epsilon'$ is small and $\epsilon_0 \leq \epsilon'$.

\end{enumerate}
Note that none of these conditions depends on $L_0$.

%We claim that for arbitrary $\delta>0$, if $\epsilon = \epsilon(\chi,\delta) \in (0,\hat \epsilon)$ is small enough, then
%\begin{equation}\label{eq: epsilon_upper_bound}
%\epsilon_{k-1} \cdots \epsilon_1\epsilon_0 \geq (\epsilon_k)^\delta \text{ for } k \geq 1\ .
%\end{equation}
%Indeed, for $k=1$ we have, when $\epsilon$ is small,
%\[
%\epsilon_0 \geq \exp\left( - \frac{\delta \chi}{\epsilon_0} \right) = \epsilon_1^\delta\ .
%\]
%Suppose now that \eqref{eq: epsilon_upper_bound} is true for $k-1$ in place of $k$. Then for $\epsilon \in (0,\hat \epsilon)$ small,
%\[
%\epsilon_{k-1} \cdots \epsilon_0 \geq \epsilon_{k-1}^{\delta+1} \geq \exp\left( \frac{-\delta \chi}{\epsilon_{k-1}}\right) = \epsilon_k^\delta\ .
%\]
%This proves by induction that \eqref{eq: epsilon_upper_bound} holds for all $k \geq 1$.

%From the definitions of $L_k$ and $l_k$, for small $\epsilon = \epsilon(\chi) \in (0,\hat \epsilon)$,
%\begin{equation}\label{eq: L_k_lower_bound}
%L_k \geq L_0 l_k \geq \frac{L_0D}{2 \epsilon_{k-1}}\ .
%\end{equation}
%For a bound in the other direction, we use \eqref{eq: epsilon_upper_bound} to obtain the following. For $\delta>0$, if $\epsilon = \epsilon(\chi,\delta) \in (0,\hat \epsilon)$ is small enough,
%\begin{equation}\label{eq: L_k_upper_bound}
%L_k = L_0 l_1 \cdots l_k = \frac{L_0D^k}{\epsilon_{k-1} \cdots \epsilon_0} \leq \frac{L_0D^k}{\epsilon_k^\delta}\ .
%\end{equation}

Before we move on we should also verify \eqref{eq: n_k_condition} and \eqref{eq: t_k_condition}. The latter holds by definition. For the first, we must show that 
\[
1 \leq \left\lfloor \frac{1}{(2e \cdot \epsilon_k)^{\frac{1}{d-1}}} \right\rfloor \leq \left\lfloor \frac{5}{3} l_{k+1} \right\rfloor\ .
\]
The left inequality holds for $\epsilon \leq (2e)^{-1}$ (see (E1)). By monotonicity of the floor function, the right holds so long as
\[
1 \leq \frac{5}{3} (2e \cdot \epsilon_k)^{\frac{1}{d-1}} \left\lfloor \frac{D}{\epsilon_k^{\frac{1}{d-1}}} \right\rfloor
\]
and this also holds since $\epsilon_k \leq \epsilon\leq D^{d-1}$ (see (E2)) because $D \geq \frac{6}{5(2e)^{\frac{1}{d-1}}}$.

\subsection{Inductive argument}\label{sec: induction}

Now we must show that 
\[
\widetilde \epsilon_k \leq \epsilon_k \text{ for all } k \geq 0\ .
\]
This holds by definition when $k=0$. So assume it is true for some value of $k$; we will show it for $k+1$ by bounding the terms in \eqref{eq: master_1} and \eqref{eq: master_2} one by one. In the argument, we will repeatedly use that
\[
\frac{1}{2 \cdot (2e\epsilon_k)^{\frac{1}{d-1}}} \leq n_k \leq \frac{1}{(2e \epsilon_k)^{\frac{1}{d-1}}}\ ,
\]
which holds since $\epsilon_k \leq (2e)^{-1}$ (see (E1)), and 
\[
\frac{D}{2\epsilon_k^{\frac{1}{d-1}}} \leq l_{k+1} \leq \frac{D}{\epsilon_k^{\frac{1}{d-1}}}\ ,
\]
which holds since $\epsilon_k \leq \epsilon \leq D^{d-1}$ (see (E2)).

For the first term of \eqref{eq: master_1}, use the inductive hypothesis that $\widetilde \epsilon_k \leq \epsilon_k$:
\[
(2n_k^{d-1} \widetilde \epsilon_k)^{\lfloor n_k/3\rfloor} = \exp\left( \lfloor n_k/3 \rfloor \log (2n_k^{d-1}\widetilde \epsilon_k) \right) \leq \exp \left( - \lfloor n_k/3 \rfloor \right)\ .
\]
As long as $n_k \geq 3$ (see (E1)), an upper bound is $e^{-n_k/6}$ and we obtain
\[
(2n_k^{d-1} \widetilde \epsilon_k)^{\lfloor n_k/3 \rfloor} \leq \exp \left( - \frac{1}{12\cdot (2e \epsilon_k)^{\frac{1}{d-1}}} \right) = \epsilon_{k+1} \exp \left( - \frac{\frac{1}{12 \cdot (2e)^{\frac{1}{d-1}}}-\chi}{\epsilon_k^{\frac{1}{d-1}}} \right)\ .
\]
The next term of \eqref{eq: master_1} is bounded as
\[
\left( \frac{5n_kl_{k+1}}{3} \right)^d \leq \left( \frac{5D}{3(2e)^{\frac{1}{d-1}}} \right)^d \epsilon_k^{-\frac{2d}{d-1}}\ .
\]
Therefore, along with (E3),
\begin{equation}\label{eq: term_1}
\left( \frac{5n_kl_{k+1}}{3} \right)^d (2n_k^{d-1}\widetilde \epsilon_k)^{\lfloor n_k/3\rfloor} \leq \epsilon_{k+1} \left[ \exp \left( - \frac{\frac{1}{12 \cdot (2e)^{\frac{1}{d-1}}}-\chi}{\epsilon_k^{\frac{1}{d-1}}} \right) \cdot \left( \frac{5D}{3(2e)^{\frac{1}{d-1}}} \right)^d \epsilon_k^{-\frac{2d}{d-1}} \right] \leq \epsilon_{k+1}/4\ .
\end{equation}
%Given $D$, there is an $\epsilon'$ such that for any $\chi \in (0,1/(48e))$,
%\[
%\exp\left( - \frac{\frac{1}{24e}-\chi}{\epsilon''} \right) \cdot \frac{(5D/(6e))^2}{(\epsilon'')^2} \leq \frac{1}{4} \text{ for all } \epsilon'' \in (0,\epsilon')\ .
%\]
%So if we pick $\epsilon \in (0,\hat \epsilon)$ small enough and
%\begin{equation}\label{eq: assumption_1}
%\chi \in (0,1/(48e))
%\end{equation}
%then
%\begin{equation}\label{eq: term_1}
%\left( \frac{5n_kl_{k+1}}{3} \right)^2 (2n_k\widetilde \epsilon_k)^{\lfloor n_k/3\rfloor} \leq \frac{1}{4} \epsilon_{k+1}\ .
%\end{equation}

We move to the rightmost term of \eqref{eq: master_1}: 
\begin{align*}
\left( \frac{5}{3} l_{k+1} \right)^d \exp \left( - \gamma n_k L_k \right) &\leq \left(\frac{5D}{3}\right)^d \epsilon_k^{-\frac{d}{d-1}} \exp\left(-\frac{\gamma}{2(2e\epsilon_k)^{\frac{1}{d-1}}}\right) \\
&= \epsilon_{k+1} (5D/3)^d \epsilon_k^{-\frac{d}{d-1}} \exp \left( -\frac{\frac{\gamma}{2(2e)^{\frac{1}{d-1}}} - \chi}{\epsilon_k^{\frac{1}{d-1}}}\right)\ .
\end{align*}
%Again, given $D$, and choosing
%\begin{equation}\label{eq: chi_assumption_2}
%\chi \in (0,\gamma/(8e))\ ,
%\end{equation}
%one has for $\epsilon$ small enough
By (E4),
\begin{equation}\label{eq: term_2}
\left( \frac{5}{3} l_{k+1} \right)^d \exp \left( - \gamma n_k L_k \right) \leq \epsilon_{k+1}/4\ .
\end{equation}

Finally we bound the last term of \eqref{eq: master_1}; recall it is
\[
4dL_{k+1} \sum_{r \geq \lfloor L_{k+1}/4 \rfloor} (2de^{-(\log (r/t_{k+1})-1)})^r\ .
\]
For the sum, consider an integer $x$ and 
\[
\sum_{r \geq x} (2de^{-(\log(r/t_{k+1})-1)})^r = \sum_{r \geq x} \left( 2de \frac{t_{k+1}}{r} \right)^r\ .
\]
If $x \geq 4det_{k+1}$ then this no bigger than
\[
\sum_{r \geq x} 2^{-r} = 2^{-x+1}\ .
\]

We would like to use $x=\lfloor L_{k+1}/4 \rfloor$, so we must verify that 
\begin{equation}\label{eq: time_upper_bound}
\lfloor L_{k+1}/4 \rfloor \geq 4de \cdot t_{k+1}.
\end{equation} 
Note that since we have taken $L_0 \geq 4$ and (E2) implies that $l_k \geq 1$ for all $k$, we also have $L_{k+1}/4 \geq 1$, so
\[
\lfloor L_{k+1}/4 \rfloor \geq  L_k l_{k+1}/8 \geq L_k \frac{D}{16\epsilon_k^{\frac{1}{d-1}}} = Cn_kL_k \frac{D}{16Cn_k \epsilon_k^{\frac{1}{d-1}}} \geq Cn_kL_k \frac{De}{8C}\ .
\]
By choice of $D$, one has $D \geq 32dC$, so we obtain $\lfloor L_{k+1}/4\rfloor \geq 4deCn_kL_k = 4de \cdot t_{k+1}$. Therefore, using $L_0 \geq 4$,
\[
4dL_{k+1} \sum_{r \geq \lfloor L_{k+1}/4 \rfloor} (2de^{-(\log (r/t_{k+1})-1)})^r \leq 8dL_{k+1}2^{-\lfloor L_{k+1}/4\rfloor} \leq 8dL_{k+1}2^{-L_{k+1}/8}\ .
\]
By $L_{k+1} \geq l_{k+1} \geq \frac{D}{2\epsilon_k^{\frac{1}{d-1}}}$, our upper bound becomes
\[
8dL_{k+1} 2^{-L_{k+1}/16} 2^{-\frac{D}{32 \epsilon_k^{\frac{1}{d-1}}}}\ .
\]
This is bounded using (E5) as
\begin{equation}\label{eq: term_3}
\epsilon_{k+1} \cdot 8dL_{k+1} 2^{-L_{k+1}/16}\exp\left( - \frac{\frac{D\log 2}{32}-\chi}{\epsilon_k^{\frac{1}{d-1}}}\right) \leq \epsilon_{k+1}/2\ .
\end{equation}

\subsubsection{The case $\alpha>1$}

In the case $\alpha>1$, one can make the following choice of parameters: for some $\epsilon>0$ small enough,
\begin{equation}\label{eq: initial}
\epsilon_0 \in (0,\epsilon),~ L_0 \leq K\epsilon_0^{-\frac{2\delta}{(d-1)(\alpha-1)}},~t_0=0,
\end{equation}
where $K = \frac{(2e)^{\frac{\alpha}{(\alpha-1)(d-1)}}}{(32de)^{\frac{1}{\alpha-1}}}$. Then we put for $k=1, 2, \ldots$,
\[
\epsilon_{k+1} = \exp\left( - \frac{\chi}{\epsilon_k^{\frac{1}{d-1}}} \right),~ l_{k+1} = \left\lfloor \frac{1}{\epsilon_k^{\frac{\alpha+2\delta}{d-1}}} \right\rfloor,~ t_{k+1} = C(n_kL_k)^\alpha,
\]
where $\chi$ is chosen small enough, and for $k=0, 1, \ldots$,
\[
n_k = \left\lfloor \frac{1}{(2e\epsilon_k)^{\frac{1}{d-1}}} \right\rfloor.
\]
Similarly to the last section, one can then show that $\widetilde\epsilon_k \leq \epsilon_k$ for all $k = 0, 1, 2, \ldots$.

%before had t_{k+1} = \left\lfloor \frac{1}{\epsilon_k^{\frac{\alpha+\delta}{d-1}}} \right\rfloor

\subsection{Final result}

\subsubsection{On a subsequence: the case $\alpha=1$} \label{sec: subsequence}
Combining \eqref{eq: term_1}, \eqref{eq: term_2} and \eqref{eq: term_3}, we obtain $\widetilde \epsilon_{k+1} \leq \epsilon_{k+1}$. So by induction, $\widetilde \epsilon_k \leq \epsilon_k$ for all $k \geq 0$, and by attractiveness,
\begin{equation}\label{eq: before_gaps}
\mathbb{P}_{\epsilon_0,q,L_0}(\sigma_v^t = +1 \text{ for all } v \in \Lambda_{L_k} \text{ and } t = T_k) \geq 1-\epsilon_k\ .
\end{equation}
To turn this into a bound involving not $1-\epsilon_k$, but instead a function of $T_k$, we first note
\begin{align*}
t_{k+1} = Cn_kL_k &\leq C \cdot \frac{1}{(2e\epsilon_k)^{\frac{1}{d-1}}}L_0 l_1 \cdots l_k \\
&\leq C \cdot \frac{D^k}{(2e)^{\frac{1}{d-1}}} \cdot \frac{L_0}{(\epsilon_k \cdots \epsilon_0)^{\frac{1}{d-1}}} \text{ for } k \geq 0\ .
\end{align*}
We claim that for all $k \geq 0$,
\begin{equation}\label{eq: epsilon_bound}
\frac{1}{\epsilon_{k-2} \cdots \epsilon_0} \leq \frac{1}{\epsilon_{k-1}}\ .
\end{equation}
Here, we interpret $\epsilon_{-\ell} = 1$ for $\ell \geq 1$. For $k\leq 2$ it is true due to monotonicity of $\epsilon_k$ in $k$. Assuming it holds for some value of $k$, we bound using (E6):
\[
\frac{1}{\epsilon_{k-1} \cdots \epsilon_0} \leq \frac{1}{\epsilon_{k-1}^2} = \frac{1}{\epsilon_k} \frac{\exp\left( - \chi/\epsilon_{k-1}^{\frac{1}{d-1}}\right)}{\epsilon_{k-1}^2} \leq \frac{1}{\epsilon_k}\ .
\]

Therefore the bound we give for $t_{k+1}$ is
\[
t_{k+1} \leq C \frac{D^k}{(2e)^{\frac{1}{d-1}}} \frac{L_0}{(\epsilon_k \epsilon_{k-1}^2)^{\frac{1}{d-1}}} \text{ for } k \geq 0
\]
and by monotonicity and (E8), putting $\iota = \exp\left( \chi/(d-1) \right)$,
\begin{align}
T_{k+1} &\leq C(k+1) \cdot \frac{D^k}{(2e)^{\frac{1}{d-1}}}  \frac{L_0}{(\epsilon_k \epsilon_{k-1}^2)^{\frac{1}{d-1}}} \nonumber \\
&= C(k+1) \cdot \frac{L_0D^k}{(2e)^{\frac{1}{d-1}}} \log_{\iota}(1/\epsilon_{k+1}^{\frac{1}{d-1}}) \left( \log_{\iota} \log_{\iota}(1/\epsilon_{k+1}^{\frac{1}{d-1}})\right)^2 \nonumber \\
&\leq L_0 \log_{\iota} (1/\epsilon_{k+1}^{\frac{1}{d-1}}) \left( \log_{\iota} \log_{\iota}(1/\epsilon_{k+1}^{\frac{1}{d-1}}) \right)^3.  \label{eq: t_k_bound}
\end{align}
For $x \geq \iota$, setting $y=L_0 x(\log_{\iota} x)^3$,
\[
\frac{L_0 x}{y/(\log_{\iota} y)^3} = \left( \frac{\log_{\iota} (L_0x) + 3 \log_{\iota} \log_{\iota} x}{\log_{\iota} x} \right)^{3} \geq 1\ ,
\]
so for such $x$, $x \geq \frac{y}{L_0 \log_{\iota}^3 y}$. Applying this in \eqref{eq: t_k_bound} with $x=\log_{\iota}\left(1/\epsilon_{k+1}^{\frac{1}{d-1}}\right)$, noting (E7), and using the fact that $y \mapsto \frac{y}{L_0\log_{\iota}^3 y}$ is monotone for $y \geq \iota^{\frac{3}{\log \iota}}$ (which itself guaranteed for our choice of $y$ by (E7)),
\[
\log_{\iota}\left( \frac{1}{\epsilon_{k+1}^{\frac{1}{d-1}}}\right) \geq \frac{T_{k+1}}{L_0 \log_{\iota}^3 T_{k+1}} \text{ for }k \geq 0\ ,
\]
or
\[
\epsilon_{k+1} \leq \exp \left( - \frac{\chi^4}{(d-1)^3} \frac{T_{k+1}}{L_0 \log^3 T_{k+1}} \right) \text{ for } k \geq 0\ .
\]
Rewriting \eqref{eq: before_gaps}, we obtain
\[
\mathbb{P}_{\epsilon_0,q,L_0}\left( \sigma_v^t  = +1 \text{ for all } v \in \Lambda_{L_k} \right) \geq 1-\exp\left( -\frac{\chi^4}{(d-1)^3} \frac{t}{L_0 \log^3 t} \right) \text{ for } t=T_k \text{ and } k \geq 0\ .
\]
Note that we could replace $\log^3 t$ by $\log t ~(\log \log)^3t$ or more iterated logs, and by doing this, we can adjust the constants to replace $\log^3 t$ by $\log^2 t$. Thus, for some $M >0$,
\begin{equation}\label{eq: near_end}
\mathbb{P}_{\epsilon_0,q,L_0}\left( \sigma_v^t = +1 \text{ for all } v \in \Lambda_{L_k} \right) \geq 1-\exp\left( -M \frac{t}{L_0 \log^2 t} \right) \text{ for } t=T_k \text{ and } k \geq 0\ .
\end{equation}

\subsubsection{On a subsequence: the case $\alpha>1$}

In this case, we repeat almost the same computations as above, but this time using the values of parameters chosen for $\alpha>1$. We then obtain, for $p = 2\alpha + 5\delta$,
\[
\mathbb{P}_{\epsilon_0,q,L_0}\left( \sigma_v^t  = +1 \text{ for all } v \in \Lambda_{L_k} \right) \geq 1-\exp\left( -\frac{\chi^{p+1}}{(d-1)^p} \frac{t^{1/\alpha}}{L_0 \log^p t} \right) \text{ for } t=T_k \text{ and } k \geq 0.
\]
As before, in this derivation, we can replace $\log^p t$ by $\log^{\alpha+2\delta} t \cdot (\log \log)^p t$, or more iterated logs, and by doing this, we can adjust the constants to replace $\log^pt$ by $\log^{\alpha + 3\delta}t$ for any $\eta>0$. Therefore we can achieve for some $M>0$
\begin{equation}\label{eq: near_end_alpha_not_one}
\mathbb{P}_{\epsilon_0,q,L_0}\left(\sigma_v^t  = +1 \text{ for all } v \in \Lambda_{L_k} \right) \geq 1-\exp\left( -M \frac{t^{1/\alpha}}{\log^{\alpha +3\delta} t}\right) \text{ for } t=T_k \text{ and } k \geq 0\ .
\end{equation}

\subsubsection{Filling in the gaps}

To extend \eqref{eq: near_end} and \eqref{eq: near_end_alpha_not_one} to all $t \geq 0$, we will be comparing evolutions started from configurations sampled from product measures with different values of $\epsilon$. Recall that we have derived \eqref{eq: near_end} under the assumptions that $q \in [0,1]$, \eqref{eq: assumption_erosion} holds for $\alpha = 1$ and some $C$, and, for some $\epsilon = \epsilon(\alpha,C,d) >0$ satisfying (E1)-(E7), one has $\epsilon_0 \in (0,\epsilon]$ with $L_0 \geq 4$. Similar assumptions were made in the case $\alpha>1$ to derive \eqref{eq: near_end_alpha_not_one}, with now $\delta>0$ also given, and $\epsilon = \epsilon(\alpha,C,d,\delta)$, with $L_0$ satisfying the bound in \eqref{eq: initial}. So we will now fix $q, \alpha, d, \delta,C,$ and $L_0$, and, given $t>0$, try to modify $\epsilon_0$ to force $t$ to equal some $T_k$.

For this purpose, if $\eta \in (0,1)$, we write $\epsilon_k(\eta)$, $t_k(\eta)$ and $T_k(\eta)$ for the corresponding values of $\epsilon_k$, $t_k$ and $T_k$ with $\epsilon_0=\eta$. %Summarizing the (implication of the) above conclusion, we know that there exists $\epsilon' \in (0,\hat \epsilon)$ such that for all $\epsilon \in (0,\epsilon']$, if for some $k \geq 1$, $t=T_k(\epsilon)$, then
%\begin{equation}\label{eq: near_the_end}
%\mathbb{P}_{p,q}(\sigma_t(0) = -1) \leq \exp\left( -\chi^3 \frac{t}{2CL_0(\log t)^2}\right)\ .
%\end{equation}
We will write $\epsilon_k' = \epsilon_k(\epsilon)$, $t_k' = t_k(\epsilon)$ and $T_k' = T_k(\epsilon)$. Since $\epsilon \in (0,\hat \epsilon]$, $\epsilon_k'$ decreases with $k$ and therefore $t_k'$ increases with $k$. (Recall that $\hat \epsilon$ was defined in \eqref{eq: hat_epsilon_def}.)

For each fixed $k \geq 1$, if we continuously decrease $\eta$ from $\epsilon$ to $\epsilon_1'$, then $T_k(\eta)$ increases continuously from $T_k(\epsilon) = T_k'$ to
\[
T_k(\epsilon_1') = t_1(\epsilon_1') + \cdots + t_k(\epsilon_1') = t_2(\epsilon) + \cdots + t_{k+1}(\epsilon) = T_{k+1}(\epsilon) - t_1(\epsilon) = T_{k+1}'-t_1'\ .
\]
Thus any $t>0$ which is not in $\cup_{k \geq 2} [T_k' - t_1',T_k')$ is of the form $t=T_{k(t)}(\epsilon(t))$ for some $k(t) \geq 1$ and some $\epsilon = \epsilon(t) \in (\epsilon_1', \epsilon]$. Then for any $\epsilon_0 \in (0,\epsilon_1')$ we have $1-\epsilon_0 \geq 1-\epsilon_1' \geq 1-\epsilon(t)$. Therefore, by attractiveness and \eqref{eq: near_end}, we have for $\alpha=1$,
\begin{align*}
\mathbb{P}_{\epsilon_0,q,L_0}(\sigma_0^t  = -1) &\leq \mathbb{P}_{\epsilon(t),q,L_0}(\sigma_0^t=-1) \\
&\leq \exp\left( -M \frac{t}{L_0 \log^2 t} \right) \text{ for } t \notin \cup_{k \geq 2} [T_k'-t_1',T_k')
\end{align*}
and a similar statement for $\alpha>1$.

To extend the result to $t \in \cup_{k \geq 2} [T_k'-t_1',T_k')$, observe that for each $k$ and $t \in [T_k'-t_1',T_k')$, if $\sigma_0^t=-1$ and the spin at the origin does not flip between times $t$ and $T_k'$, then $\sigma_0^{T_k'} =-1$. Using the Markov property, we obtain then for $\alpha=1$,
\begin{align*}
\mathbb{P}_{\epsilon_0,q,L_0}( \sigma_0^t =-1) &\leq e^{t_1'} \mathbb{P}_{\epsilon_0,q,L_0}(\sigma_0^{T_k'}=-1) \\
&\leq e^{t_1'} \exp\left( -M \frac{T_k'}{L_0 \log^2 T_k'} \right),
\end{align*}
and a similar statement for $\alpha>1$, where $e^{-t_1'}$ comes from the probability that no flips occur at the origin from time $t$ to time $T_k'$. Since $t_1'$ is a constant relative to $k$, this shows the bounds of Theorem~\ref{thm: block_FSS} with the event $\{\sigma_0^s = -1 \text{ for some } s \geq t\}$ replaced by the event $\{\sigma_0^t = -1\}$. The rest of the proof from this point (showing the bound for this first event) is identical to that in \cite[p. 514]{FSS} (applying the strong Markov property once), so we omit the details.

\section{Near-exponential fixation}\label{sec: finish}

In this section, we combine Proposition~\ref{q=1 thm}, and Theorems~\ref{exponential box decay} and~\ref{thm: block_FSS} to prove Theorem~\ref{fixation times}.

\begin{proof}
First suppose $d=2$. Theorem~\ref{exponential box decay} implies that if $q>1/2$, then Assumption~\ref{assumption} holds with $\alpha=1$. In this case, part $1$ of Theorem~\ref{thm: block_FSS} applies, so let $\eps>0$ be as in part $1$ of Theorem~\ref{thm: block_FSS}.  By Proposition~\ref{q=1 thm}, we can choose $L_0\ge 4$ sufficiently large (depending on $p$) such that
\begin{equation}\label{box q=1 limit}
\probsub{\lim_{t\to\infty} \sigma^t_x = +1 \text{ for all } x\in \Lambda_{L_0} \, \middle|\, \sigma^0_x = -1 \text{ for all } x\in \Lambda_{L_0}^c}{p,1} > 1-\eps/2.
\end{equation}
By Fatou's lemma,
$$
\liminf_{t\to\infty} \probsub{\sigma^t_x = +1 \text{ for all } x\in \Lambda_{L_0} \, \middle|\, \sigma^0_x = -1 \text{ for all } x\in \Lambda_{L_0}^c}{p,1} > 1-\eps/2,
$$
so we can choose $t_0 = t_0(\eps,L_0,p)$ such that
\begin{equation}
\label{box q=1}
\probsub{\sigma^{t_0}_x = +1 \text{ for all } x\in \Lambda_{L_0} \, \middle|\, \sigma^0_x = -1 \text{ for all } x\in \Lambda_{L_0}^c}{p,1} \geq 1-\eps/2.
\end{equation}
Since $L_0$ and $t_0$ are fixed and finite, the probability in (\ref{box q=1}) varies continuously with $q$.  For example, to show continuity at $q=1$, observe that by decreasing $q$ from $q=1$ to $q=q^*<1$, we may introduce at most a Poisson$(t_0{L_0}^2(1-q^*))$ number of energy-neutral flips from $+1$ to $-1$ in $\Lambda_{L_0}$ by time $t_0$, and we may remove at most a Poisson$(t_0{L_0}^2(1-q^*))$ number of energy-neutral flips from $-1$ to $+1$.  If both of these are zero, then the configurations at time $t_0$ when $q=1$ and $q=q^*$ are identical (by coupling all other flips so they are the same). The probability that the number of different flips is zero can be made larger than $1-\epsilon/2$ by choosing $q^* = q^*(\eps,L_0,t_0,p)<1$ sufficiently close to $1$, so
\begin{equation}
\label{box q*}
\probsub{\sigma^{t_0}_x = +1 \text{ for all } x\in \Lambda_{L_0} \, \middle|\, \sigma^0_x = -1 \text{ for all } x\in \Lambda_{L_0}^c}{p,q^*} =:1-\eps_0 > 1-\eps.
\end{equation}
Now, independently for each $x\in \Z^2$, we run the Glauber dynamics with $q=q^*$ and initial density $p$ of $+1$'s in $xL_0+\Lambda_{L_0}$ and all $-1$'s outside $xL_0+\Lambda_{L_0}$ until time $t_0$. Call the box $xL_0+\Lambda_{L_0}$ a $+$-box if and only if all spins in $xL_0+\Lambda_{L_0}$ are $+1$ at time $t_0$ under these dynamics, and observe that the event that $xL_0+\Lambda_{L_0}$ is a $+$-box depends only on the initial configuration and the sequence of clock rings and tie-breaking coin flips within $xL_0+\Lambda_{L_0}$ up to time $t_0$. Since the initial configurations and sequences of clock rings and coin flips are independent between boxes, it follows that each box is independently a $+$-box. Finally, for each $y\in \Z^2$, we declare $\tilde \sigma^{0}_y = +1$ if $y$ is in a $+$-box, and $\tilde \sigma^{0}_y = -1$ otherwise, and let $(\tilde\sigma^t)_{t\ge0}$ evolve according to the Glauber dynamics with $q=q^*$. If $\sigma^{t_0}$ is the state of the Glauber dynamics with $q=q^*$ and initial density $p$ of $+1$'s, then by attractiveness it follows that $\sigma^{t_0}$ dominates (has more $+1$'s than) $\tilde\sigma^{0}$. By~\eqref{box q*}, the configuration $\tilde\sigma^{0}$ has the property that each box $xL_0+\Lambda_{L_0}$ for $x\in \Z^2$ is filled with $+1$ spins independently with probability $1-\eps_0 > 1-\eps$, and filled with $-1$ spins otherwise. Therefore, $\tilde\sigma^0$ satisfies the conditions of part $1$ of Theorem~\ref{thm: block_FSS}, and we have for $q=q^*$ and $t>t_0$
\begin{align*}
\mathbb{P}_{p,q}(\sigma_0^s = -1 \text{ for some }s \geq t) &\leq \mathbb{P}_{p,q}(\tilde\sigma_0^s = -1 \text{ for some }s \geq t-t_0)\\
&= \mathbb{P}_{\epsilon_0,q,L_0}(\sigma^s_0 = -1 \text{ for some } s \geq t-t_0) \\
&\leq \exp\left( -C_1 \frac{t}{\log^2 t} \right)
\end{align*}
for all large enough $t$. This completes the proof in the case $d=2$.

Now suppose $d\ge 3$ and $\beta>\min(d-1,3)$ are fixed. Let $\alpha\in (\min(d-1,3),\beta)$, and let $C >0$ be the constant of Theorem~\ref{exponential box decay}. For $d=3$, then Assumption~\ref{assumption} is satisfied for this $\alpha>2$ and $C$, and $\gamma=1/C$. For $d\geq 4$, if $L \ge (\log L)^{C/(\alpha-3)}$, then Assumption~\ref{assumption} is satisfied for large enough $L$ for this $\alpha>3$ and $C$, and $\gamma=1/C$. By increasing $C$ further, the assumption is seen to hold for all $L$.

Since $\alpha>1$, we will apply part $2$ of Theorem~\ref{thm: block_FSS}, so we must check that we can choose $\eps_0 \in (0,\eps)$ and $L_0$ such that
\[
(\log L_0)^{C'/(\alpha-3)}\le L_0 \leq K \epsilon_0^{-\frac{2\delta}{(d-1)(\alpha-1)}}.
\]
We arbitrarily choose $\delta = 1/2$. Let $\eps>0$ be as in part $2$ of Theorem~\ref{thm: block_FSS}, and select $\eps' \in (0,\eps)$, which will be specified shortly. Letting $c = c(p,d)$ be the constant in Proposition~\ref{q=1 thm}, we may choose $L_0 = \ceil{\frac{1}{c} \log(2/\eps')} + 1$ to obtain
\begin{equation}\label{box q=1 limit 2}
\probsub{\lim_{t\to\infty} \sigma^t_x = +1 \text{ for all } x\in \Lambda_{L_0} \, \middle|\, \sigma^0_x = -1 \text{ for all } x\in \Lambda_{L_0}^c}{p,1} \ge 1-e^{-c{L_0}} > 1-\eps'/2.
\end{equation}
By choosing $\eps'$ sufficiently small, it follows that for any $\eps_0\in (0,\eps')$ we have
$$
(\log L_0)^{C'/(\alpha-3)} \le L_0 \le  K (\epsilon')^{-\frac{1}{(d-1)(\alpha-1)}} \le  K \epsilon_0^{-\frac{1}{(d-1)(\alpha-1)}}.
$$
The remainder of the proof proceeds in the same way as for $d=2$, but with~\eqref{box q=1 limit 2} in place of~\eqref{box q=1 limit}, $\eps'$ in place of $\eps$, and $d$ in place of $2$ where appropriate.  In this way, part $2$ of Theorem~\ref{thm: block_FSS} gives us a probability bound of $\exp[-C_2 t^{1/\alpha} / (\log t)^{\alpha+3\delta}]$, which is smaller than $\exp[-t^{1/\beta}]$ for large enough~$t$.
\end{proof}

\section*{Acknowledgments} MD thanks Eric Vigoda and Antonio Blanca for discussions related to the erosion times of boxes. MD and DS are grateful to Rob Morris for lengthy email discussions of coarsening dynamics. LP thanks Timo Sepp\"al\"ainen and Benedek Valk\'o for helpful discussions on large deviation estimates in particle systems, and Ivan Corwin for useful remarks. MD is supported by an NSF CAREER grant. LP is partially supported by NSF grant DMS-1664617. DS is supported by NSF grants DMS-1418265 and CCF-1740761.
We are grateful to anonymous referees for valuable suggestions.

%\newpage
%\pagestyle{empty}
%\bibliographystyle{abbrv}
%\bibliography{citations}

\bigskip

\textsc{Michael Damron, Department of Mathematics, Georgia Institute of Technology, Atlanta, GA 30332}

\textit{E-mail address:} \texttt{mdamron6@gatech.edu}

\medskip

\textsc{L. Petrov, Department of Mathematics, University of Virginia, Charlottesville, VA 22904, and Institute for Information Transmission Problems,
Moscow, Russia 127051}

\textit{E-mail address:} \texttt{lenia.petrov@gmail.com}

\medskip

\textsc{David Sivakoff, Departments of Statistics and Mathematics, The Ohio State University, Columbus, OH 43210}

\textit{E-mail address:} \texttt{dsivakoff@stat.osu.edu}

\end{document}